\newcolumntype{L}{>{$\displaystyle}l<{$}}
\newcolumntype{C}{>{$}c<{$}}
\numberwithin{equation}{section}
\newtheorem{thm}{Theorem}[section]
\newtheorem{cor}[thm]{Corollary}
\newtheorem{lemma}[thm]{Lemma}
\newtheorem{prop}[thm]{Proposition}
\newtheorem{result}[thm]{Result}
\newtheorem{theorem}[thm]{Theorem}
\newtheorem{corollary}[thm]{Corollary}
\newtheorem{proposition}[thm]{Proposition}
\newtheorem{conjecture}[thm]{Conjecture}
\newtheorem{assum}{Assumption}
\theoremstyle{definition}
\newtheorem{remark}[thm]{Remark}
\Crefname{thm}{Theorem}{Theorems}
\Crefname{prop}{Proposition}{Propositions}
\Crefname{lemma}{Lemma}{Lemmas}
\Crefname{cor}{Corollary}{Corollaries}
\Crefname{defn}{Definition}{Definitions}
\Crefname{prob}{Problem}{Problems}
\Crefname{conj}{Conjecture}{Conjectures}
\Crefname{assum}{Assumption}{Assumptions}
\Crefname{result}{Result}{Results}
\definecolor{darkgreen}{rgb}{0.1, 0.8, 0.1}
\newcommand{\alg}[1]{\mathfrak{#1}}  
\newcommand{\Mod}[1]{\mathcal{#1}}   
\newcommand{\VOA}[1]{\mathsf{#1}}    
\newcommand{\grp}[1]{\mathsf{#1}}    
\newcommand{\fld}[1]{\mathbb{#1}}    
\newcommand{\lat}[1]{\mathsf{#1}}    
\newcommand{\categ}[1]{\mathscr{#1}} 
\newcommand{\ZZ}{\fld{Z}}
\newcommand{\NN}{\ZZ_{\ge 0}} 
\newcommand{\RR}{\fld{R}}
\newcommand{\CC}{\fld{C}}
\newcommand{\ii}{\mathfrak{i}} 
\newcommand{\ee}{\mathsf{e}}   
\newcommand{\rlat}{\lat{Q}} 
\newcommand{\Com}[2]{\operatorname{Com} ( #1 , #2 )}
\DeclarePairedDelimiter{\brac}{\lparen}{\rparen} 
\DeclarePairedDelimiter{\sqbrac}{\lbrack}{\rbrack} 
\DeclarePairedDelimiter{\set}{\lbrace}{\rbrace}
\newcommand{\st}{\mspace{5mu} : \mspace{5mu}} 
\DeclarePairedDelimiter{\abs}{\lvert}{\rvert}
\DeclarePairedDelimiterX{\comm}[2]{\lbrack}{\rbrack}{#1 , #2}  
\DeclarePairedDelimiterX{\acomm}[2]{\lbrace}{\rbrace}{#1 , #2} 
\DeclarePairedDelimiterX{\inner}[2]{\lparen}{\rparen}{#1 , #2} 
\DeclarePairedDelimiterX{\super}[2]{\lparen}{\rparen}{#1 \delimsize\vert \mathopen{} #2} 
\newcommand{\SLG}[2]{\grp{#1} \brac[\big]{#2}}       
\newcommand{\affine}[1]{\widehat{#1}}
\newcommand{\SLA}[2]{\alg{#1}_{#2}}                       
\newcommand{\SLSA}[3]{\alg{#1} \super*{#2}{#3}}           
\newcommand{\AKMA}[2]{\affine{\alg{#1}}_{#2}}             
\newcommand{\conjaut}{\mathsf{w}}             
\newcommand{\sfaut}{\sigma}                   
\newcommand{\conjmod}[1]{\conjaut(#1)}        
\newcommand{\sfmod}[2]{\sfaut^{#1}(#2)}       
\newcommand{\savoa}[2]{\VOA{L}_{#1}(#2)}  
\newcommand{\slvoa}[1]{\savoa{#1}{\SLA{sl}{2}}}
\newcommand{\eslvoa}[1]{\VOA{E}_{#1}}     
\newcommand{\pfvoa}[1]{\VOA{C}_{#1}}      
\newcommand{\spfvoa}[1]{\VOA{sC}_{#1}}    
\newcommand{\epfvoa}[1]{\VOA{B}_{#1}}     
\newcommand{\virminmod}[1]{\VOA{M}(#1)}       
\newcommand{\nslogminmod}[1]{\VOA{sLM}(#1)}   
\newcommand{\hvoa}{\VOA{H}}                   
\newcommand{\lvoa}[1]{\VOA{V}_{\lat{#1}}}     
\newcommand{\singvoa}[1]{\VOA{S}_{#1}}        
\newcommand{\tripvoa}[1]{\VOA{W}_{#1}}        
\newcommand{\ssingvoa}[1]{\VOA{sS}_{#1}}       
\newcommand{\stripvoa}[1]{\VOA{sW}_{#1}}       
\newcommand{\slirr}[1]{\Mod{L}_{#1}}              
\newcommand{\sldis}[1]{\Mod{D}_{#1}}              
\newcommand{\slindrel}[1]{\Mod{E}_{#1}}           
\newcommand{\slrel}[2]{\slindrel{#1;\Delta_{#2}}} 
\newcommand{\fock}[1]{\Mod{F}_{#1}}               
\newcommand{\latt}[2]{\Mod{V}_{#1 + \lat{#2}}}    
\newcommand{\cosetmod}[1]{\Mod{C}_{#1}}            
\newcommand{\pfmod}[2]{\cosetmod{#1;#2}}           
\newcommand{\pfirr}[2]{\pfmod{#1}{#2}^{\slirr{}}}  
\newcommand{\pfdis}[2]{\pfmod{#1}{#2}^{\sldis{}}}  
\newcommand{\pfindrel}[2]{\pfmod{#1}{#2}}
\newcommand{\pfrel}[2]{\pfindrel{#1}{#2}^{\slindrel{}}} 
\newcommand{\epfmod}[2]{\Mod{B}_{#1;#2}}             
\newcommand{\epfirr}[2]{\epfmod{#1}{#2}^{\slirr{}}}  
\newcommand{\epfdis}[2]{\epfmod{#1}{#2}^{\sldis{}}}  
\newcommand{\epfindrel}[2]{\epfmod{#1}{#2}}
\newcommand{\epfrel}[2]{\epfindrel{#1}{#2}^{\slindrel{}}} 
\newcommand{\kactable}[1]{\mathrm{Kac}(#1)} 
\newcommand{\vdim}[2]{\Delta^{\virminmod{#1}}_{(#2)}}    
\newcommand{\nsdim}[2]{\Delta^{\nslogminmod{#1}}_{(#2)}} 
\newcommand{\pfdim}[3]{\delta_{#1;#2}^{#3}}              
\newcommand{\pfirrdim}[2]{\pfdim{#1}{#2}{\slirr{}}}
\newcommand{\pfdisdim}[2]{\pfdim{#1}{#2}{\sldis{}}}
\newcommand{\pfreldim}[2]{\pfdim{#1}{#2}{\slindrel{}}}
\DeclareMathOperator{\tr}{tr}
\newcommand{\traceover}[1]{\tr_{\raisebox{-2pt}{$\scriptstyle #1$}}} 
\DeclareMathOperator{\chmap}{ch}
\newcommand{\Gr}[1]{\sqbrac[\big]{#1}}                               
\newcommand{\ch}[1]{\chmap \Gr{#1}}                                  
\newcommand{\fch}[2]{\ch{#1} \brac[\big]{#2}}                        
\newcommand{\vch}[3]{\chi^{\virminmod{#1}}_{(#2)}(#3)}               
\newcommand{\nsch}[3]{\chi^{\nslogminmod{#1}}_{(#2)}(#3)}            
\newcommand{\jth}[1]{\vartheta_{#1}}                                 
\newcommand{\fjth}[2]{\jth{#1} \brac{#2}}                            
\newcommand{\djth}[1]{\jth{#1}'}                                     
\newcommand{\fdjth}[2]{\djth{#1} \brac{#2}}                          
\newcommand{\epfirrpart}{\Gamma}
\newcommand{\epfone}[2]{\epfirrpart_{#1; #2}}                        
\newcommand{\fepfone}[3]{\epfone{#1}{#2}(#3)}
\newcommand{\modfont}[1]{\mathrm{#1}}
\newcommand{\modS}{\modfont{S}}                        
\newcommand{\Svir}[3]{\modS^{\virminmod{#1}}_{(#2)\:(#3)}}
\newcommand{\Styp}[1]{\modS^{\jth{}}_{#1}}             
\newcommand{\Stheta}[1]{\modS^{\prime}_{#1}}           
\newcommand{\Sone}[1]{\modS^{\epfirrpart}_{#1}}        
\newcommand{\vvmf}[1]{V_{#1}}                          
\newcommand{\slcat}[1]{\categ{A}_{#1}} 
\newcommand{\pfcat}[1]{\categ{C}_{#1}} 
\newcommand{\epfcat}[1]{\categ{B}_{#1}} 
\newcommand{\fus}[1]{\mathbin{\otimes_{\raisebox{-2pt}{$\scriptstyle #1$}}}}     
\newcommand{\Grfus}[1]{\mathbin{\boxtimes_{\raisebox{-2pt}{$\scriptstyle #1$}}}} 
\newcommand{\fusco}[3]{\genfrac{[}{]}{0pt}{}{#3}{#1\:#2}} 
\newcommand{\vfusco}[4]{\fusco{(#1)}{(#2)}{(#3)}_{\virminmod{#4}}}    
\DeclareMathOperator{\End}{End}
\newcommand{\lra}{\longrightarrow}
\newcommand{\dses}[3]{0 \lra #1 \lra #2 \lra #3 \lra 0} 
\newcommand{\Res}[1]{#1 \mbox{$\downarrow$} {}}           
\newcommand{\Ind}[1]{#1 \raisebox{0.18em}{$\uparrow$} {}} 
\newcommand{\cfts}{conformal field theories}
\newcommand{\lcfts}{logarithmic conformal field theories}
\newcommand{\voa}{vertex operator algebra}
\newcommand{\voas}{vertex operator algebras}
\newcommand{\svoa}{vertex operator superalgebra}
\newcommand{\svoas}{vertex operator superalgebras}
\newcommand{\vosa}{vertex operator subalgebra}
\newcommand{\vosas}{vertex operator subalgebras}
\newcommand{\wzw}{Wess-Zumino-Witten}
\newcommand{\hw}{highest-weight}
\newcommand{\hwv}{\hw{} vector}
\newcommand{\hwm}{\hw{} module}
\newcommand{\hwms}{\hw{} modules}
\newcommand{\lw}{lowest-weight}
\newcommand{\lwv}{\lw{} vector}
\newcommand{\ope}{operator product expansion}
\newcommand{\opes}{operator product expansions}
\newcommand{\lhs}{left-hand side}
\newcommand{\rhs}{right-hand side}
\newcommand{\ns}{Neveu-Schwarz}
\begin{document}

\title{Modularity of logarithmic parafermion vertex algebras}

\author{Jean Auger}
\address{
	Department of Mathematical and Statistical Sciences \\
	University of Alberta\\
	Edmonton, Canada T6G2G1
}
\email{auger1@ualberta.ca}

\author{Thomas Creutzig}
\address{
	Department of Mathematical and Statistical Sciences \\
	University of Alberta\\
	Edmonton, Canada T6G2G1
}
\email{creutzig@ualberta.ca}

\author{David Ridout}
\address{
School of Mathematics and Statistics \\
The University of Melbourne \\
Parkville, Australia, 3010}
\email{david.ridout@unimelb.edu.au}

\subjclass[2010]{Primary 17B69; Secondary 13A50}


\begin{abstract}
The parafermionic cosets $\pfvoa{k} = \Com{\hvoa}{\slvoa{k}}$ are studied for negative admissible levels $k$, as are certain infinite-order simple current extensions $\epfvoa{k}$ of $\pfvoa{k}$. Under the assumption that the tensor theory considerations of Huang, Lepowsky and Zhang apply to $\pfvoa{k}$, 
irreducible $\pfvoa{k}$- and $\epfvoa{k}$-modules are obtained from those of $\slvoa{k}$.  Assuming the validity of a certain Verlinde-type formula likewise gives 
the Grothendieck fusion rules of these irreducible modules. Notably, there are only finitely many irreducible $\epfvoa{k}$-modules.  The irreducible $\pfvoa{k}$- and $\epfvoa{k}$-characters are computed and the latter are shown, when supplemented by pseudotraces, to carry a finite-dimensional representation of the modular group. The natural conjecture then is that the $\epfvoa{k}$ are $C_2$-cofinite vertex operator algebras.
\end{abstract}

\maketitle

\onehalfspacing 

\section{Introduction}

Logarithmic conformal field theory is of significant interest in both mathematics and physics.  From its earliest appearances \cite{RozQua92,GurLog93}, it has found applications in both statistical physics and string theory whilst its characteristic feature, reducible but indecomposable modules over a \voa{}, poses significant (but rewarding) challenges to representation theorists.  As with the more familiar rational case, there are certain logarithmic theories that may be regarded as somehow archetypal \cite{CreRel11} including symplectic fermions \cite{KauSym00}, the triplet model \cite{KauExt91,GabRat96,GabLoc99}, bosonic ghosts \cite{RidBos14} and admissible-level \wzw{} models \cite{GabFus01,LesLog04,RidFus10}.

It is of fundamental importance to develop a theory of good \lcfts{}. One very natural class consists of those that are $C_2$-cofinite \cite{ZhuMod96} (or lisse), meaning that the corresponding \voa{} only possesses finitely many (isomorphism classes of) irreducible modules. It seems plausible that the representation category of a $C_2$-cofinite \voa{} is a so-called log-modular tensor category \cite{CreLog16,CreThe17}, a mild non-semisimple generalisation of the modular tensor categories that arise when restricting to the rational case \cite{HuaRig08}.  We would like to gain better intuition about log-modular tensor categories. Unfortunately, the only really well understood examples of $C_2$-cofinite \voas{} are the triplet algebras \cite{CarNon06,AdaTri08} and their close relatives \cite{AbeOrb07,AdaN=109}.  In particular, the other archetypes mentioned above are not $C_2$-cofinite, so there is an obvious need for more examples. 

The picture advocated in \cite{CreSim15,CreSch16} for constructing new examples is as follows:
\begin{equation} \label{pic:thegame}
	\begin{tikzpicture}[thick,->,scale=0.5,baseline={(al.base)}]
		\node (C) at (0,0) {\(\pfvoa{k} = \Com{\hvoa}{\savoa{k}{\alg{g}}}\)};
		\node (L) at (0,4) {\(\savoa{k}{\alg{g}}\)};
		\node (D) at (15,0) {\(\epfvoa{k} = \Com{\lvoa{L}}{\eslvoa{k}}\).};
		\node (E) at (15,4) {\(\eslvoa{k} = \bigoplus_{\sigma \in \grp{S}} \sfmod{}{\savoa{k}{\alg{g}}}\)};
		\draw (L) -- node [above] {extension} (E);
		\draw (L) -- node [left] {coset} (C);
		\draw (C) -- node [above] {extension} (D);
		\draw (E) -- node (al) [right] {coset} (D);
	\end{tikzpicture}
\end{equation}
Here, $\alg{g}$ is a simple Lie algebra, $\savoa{k}{\alg{g}}$ is the corresponding simple affine \voa{} at a negative admissible level $k$, and $\lvoa{L}$ is a lattice \voa{} extending the Heisenberg \voa{} $\hvoa$ associated to the Cartan subalgebra of $\alg{g}$. The \voa{} $\eslvoa{k}$ is a simple current extension of $\savoa{k}{\alg{g}}$ by the images of the vacuum module under a subgroup $\grp{S}$ of spectral flow functors (see \cref{sec:sl2}).  $\epfvoa{k}$ is then an extension of the parafermion \voa{} $\pfvoa{k}$ governed by an abelian intertwining algebra. 
The results of \cite{CreSch16} suggest that both $\eslvoa{k}$ and $\epfvoa{k}$ have a good chance to be $C_2$-cofinite.  However, the former has the undesirable property that its conformal weights are unbounded below, a property that is not shared by the latter if $k<0$.

Parafermionic cosets $\pfvoa{k}$ with $k \in \ZZ_{\ge 0}$ were introduced independently in the physics \cite{ZamNon85,GepNew87} and mathematics literature \cite{LepNew81,DonGen93}.  Despite the fact that physicists have long regarded these parafermions as textbook examples of rational \cfts{}, the nature of the underlying vertex algebras ($C_2$-cofinite and rational) has only recently been rigorously established \cite{DonC2C11,AraZhu14,DonRep14}.  More recent work \cite{CreSch16, CreTen17} indicates that the traditional methods employed by physicists to analyse parafermions can also be made rigorous and that these methods should also be applicable to the logarithmic parafermions that arise at admissible levels.

We mention that the physicists' parafermion chiral algebras actually correspond to generalised \voas{} obtained as finite-order simple current extensions of the $\pfvoa{k}$ cosets with $k \in \ZZ_{\ge 1}$.  In particular, the generating fields (the parafermions themselves) for $\alg{g}=\SLA{sl}{2}$ are Virasoro primaries of conformal weight $n (1-\frac{n}{k})$, where $n=1,\dots,k-1$.  When $k=1$, the parafermion theory is trivial.  When $k=2$, it follows that the parafermion chiral algebra is the free fermion, while $\pfvoa{2}$ is the Ising model \voa{}.  (This generalisation from the free fermion is in fact the reason for the name ``parafermions''.)  There are, of course, other finite-order simple current extensions of $\pfvoa{k}$, $k \in \ZZ_{\ge 1}$, that are $C_2$-cofinite vertex operator (super)algebras.  However, these are rational and therefore have a very different flavour to the admissible-level infinite-order extensions $\epfvoa{k}$ that we study here.

For $k$ admissible, the simplest case is, of course, $\alg{g}=\SLA{sl}{2}$ for which $\slvoa{k}$ is fairly well understood 
\cite{AdaVer95,CreMod12,CreMod13,RidRel15}. Our objective is to better understand the extended parafermions $\epfvoa{k}$ in the case where $k$ is also negative.
In this article, we will determine (conjecturally all) the irreducible $\epfvoa{k}$-modules, as well as a few 
reducible but indecomposable ones, and establish the modular properties of their characters. The results are consistent with the $\epfvoa{k}$ being $C_2$-cofinite, but non-rational, \voas{}. In subsequent work, we plan to prove this $C_2$-cofiniteness and study further properties of these \voas{} with the motivation being to gain a better understanding of $C_2$-cofinite \voas{} in general.

We mention two natural extensions of our study.  First, one would like to understand the parafermions of $\slvoa{-2+1/n}$ for positive integral $n$. These are non-admissible levels, but the affine \voas{} allow for large extensions that relate to the triplet algebras via quantum hamiltonian reduction \cite{CreWalg17}. We suspect that their parafermionic cosets also allow for interesting and possibly $C_2$-cofinite \voa{} extensions. Second, one has the closely related parafermionic cosets of $\savoa{k}{\SLSA{osp}{1}{2}}$ at admissible levels, also known as graded parafermions \cite{CamGra98,ForCha07}. For general admissible levels, $\savoa{k}{\SLSA{osp}{1}{2}}$ is currently under investigation \cite{RidOSP17,CreAdm17}.  The more familiar positive integer cases, together with their parafermionic cosets, are addressed in \cite{CreOSP17}, see also \cite{EnnOSP97}.

\subsection{Schur-Weyl duality for $\pfvoa{k}$} \label{sec:Schur}

Here, we summarise the relevant results of \cite{CreSch16} concerning the Heisenberg coset $\pfvoa{k} = \Com{\hvoa}{\savoa{k}{\alg{g}}}$, where $\alg{g}$ is a simple Lie algebra. 
We are mainly interested in how the decomposition of $\savoa{k}{\alg{g}}$ into $\hvoa\otimes \pfvoa{k}$-modules allows us to determine the structures of the $\pfvoa{k}$-modules and, consequently, those of the $\epfvoa{k}$-modules.

We note the following assumption that we consider to be in force throughout this paper.
\begin{assum}\label{ass:cat}
The vertex tensor category theory of Huang, Lepowsky and Zhang \cite{HuaLog10} may be applied to the $\pfvoa{k}$-module categories that we study below.
\end{assum}
\noindent We remark that the validity of \cite{HuaLog10} has been verified for the category of ordinary modules of $\slvoa{k}$ at admissible level $k$ in \cite{CreBra17} and for the Heisenberg \voa{} $\hvoa$ in \cite{CreSch16}. Determining the applicability of \cite{HuaLog10} beyond $C_2$-cofinite \voas{} is in general a very important open problem. The key conditions to prove are the closure of a given subcategory under the $P(z)$-tensor product and the convergence of products and iterates of intertwining operators. Work on the latter condition for $\pfvoa{k}$ is underway.

We now recall how the results of \cite{CreSch16} apply to our setting.
\begin{result}\label{res:VOA}
As an $\hvoa\otimes \pfvoa{k}$-module, the simple affine \voa{} $\savoa{k}{\alg{g}}$ decomposes as
\begin{equation} \label{eq:DecompAlg}
\Res{\savoa{k}{\alg{g}}} = \bigoplus_{\lambda\in \rlat} \fock{\lambda}\otimes \cosetmod{\lambda},
\end{equation}
where $\rlat$ denotes the root lattice of $\alg{g}$. Here, $\fock{\lambda}$ denotes the Fock space of $\hvoa$ with highest weight $\lambda$ and the $\cosetmod{\lambda}$ are irreducible $\pfvoa{k}$-modules.
\end{result}

\begin{result}\label{res:sc}
$\cosetmod{0}$ is the vacuum module of the simple \voa{} $\pfvoa{k}$ and the $\cosetmod{\lambda}$ are simple currents whose fusion rules include
\begin{equation}
\cosetmod{\lambda} \fus{\pfvoa{k}} \cosetmod{\mu} \cong \cosetmod{\lambda+\mu}.
\end{equation}
\end{result}
\noindent Throughout, we understand that a \emph{fusion rule} refers to the original definition used by physicists, namely the decomposition of the fusion product of two modules into isomorphism classes of indecomposables.  The multiplicities with which these indecomposables appear in a given fusion product are called the \emph{fusion multiplicities}.

\begin{result} \label{res:multfree}
For $k<0$, the decomposition \eqref{eq:DecompAlg} is \emph{multiplicity-free}, meaning that $\cosetmod{\lambda} \ncong \cosetmod{\mu}$ whenever $\lambda\neq\mu$.
\end{result}
\noindent This last result follows \cite[Sec.~3.2.1]{CreSch16} from the fact that the conformal weights of $\savoa{k}{\alg{g}}$ are bounded below.  In fact, it is also true for $\savoa{k}{\SLA{sl}{2}}$ with $k>0$ and $k \notin \ZZ$, by the criterion of \cite[Sec.~3.2.2]{CreSch16}.

\begin{result}\label{res:modules}
If $\Mod{M}$ is an indecomposable $\savoa{k}{\alg{g}}$-module on which $\hvoa$ acts semisimply, then its 
restriction to 
an $\hvoa \otimes \pfvoa{k}$-module is
\begin{equation}
\Res{\Mod{M}} \cong \bigoplus_{\mu\in \alpha + \rlat} \fock{\mu} \otimes \Mod{D}_{\mu},
\end{equation}
for some $\alpha \in \CC \otimes_{\ZZ} \rlat$.  Moreover, this decomposition is structure-preserving:  If $\Mod{M}$ has socle series $0 \subset \Mod{M}^1 \subset \dots \subset \Mod{M}^{\ell-1} \subset \Mod{M}$ and we define $\pfvoa{k}$-modules $\Mod{D}_{\mu}^i$ by
\begin{equation}
	\Res{\Mod{M}^i} \cong \bigoplus_{\mu\in \alpha + \rlat} \fock{\mu} \otimes \Mod{D}_{\mu}^i, \quad i=1,\dots,\ell-1,
\end{equation}
then $0 \subset \Mod{D}_{\mu}^1 \subset \dots \subset \Mod{D}_{\mu}^{\ell-1} \subset \Mod{D}_{\mu}$ is the socle series of $\Mod{D}_{\mu}$, for all $\mu \in \alpha + \rlat$.
\end{result}

\begin{result}\label{res:lifting}
Given an indecomposable $\pfvoa{k}$-module $\Mod{D}$, there exists $\alpha \in \CC \otimes_{\ZZ} \rlat$ such that the induction
\begin{equation}
\Ind{(\fock{\mu} \otimes \Mod{D})} = \savoa{k}{\alg{g}} \fus{\hvoa \otimes \pfvoa{k}} (\fock{\mu} \otimes \Mod{D})
\end{equation}
is an (untwisted) $\savoa{k}{\alg{g}}$-module if and only if $\mu \in \alpha + \rlat'$.
\end{result}
\noindent Here, $\rlat'$ denotes the dual lattice of $\rlat$ with respect to the bilinear form defined by the \opes{} of the generating fields of $\hvoa$.  This form is $k$ times the Killing form of $\alg{g}$, where we normalise the latter so that the length squared of the highest root is $2$.

We mention that when $\fock{\mu} \otimes \Mod{D}$ does lift to an $\savoa{k}{\alg{g}}$-module, as in the previous \lcnamecref{res:lifting}, the lift will be irreducible if and only if $\Mod{D}$ is irreducible (by \cref{res:modules}).

Given a lattice $\lat{L} \subseteq \CC \otimes_{\ZZ} \rlat$, there is a simple current extension $\lvoa{\lat{L}}$ of $\hvoa$ satisfying
\begin{equation}
	\Res{\lvoa{\lat{L}}} \cong \bigoplus_{\lambda \in \lat{L}} \fock{\lambda}.
\end{equation}
This extension is a \voa{} (superalgebra) if and only if $\lvoa{\lat{L}}$ is $\ZZ$-graded ($\frac{1}{2} \ZZ$-graded) by conformal weight \cite{DonGen93}.  A theorem of Li \cite{LiAbe01} now implies that $\pfvoa{k}$ has an extension $\epfvoa{k}$ satisfying
\begin{equation}
	\Res{\epfvoa{k}} \cong \bigoplus_{\lambda \in \lat{L}} \cosetmod{\lambda}
\end{equation}
as a $\pfvoa{k}$-module.  Moreover, $\epfvoa{k}$ is a \voa{} (superalgebra) if and only if it is $\ZZ$-graded ($\frac{1}{2} \ZZ$-graded) by conformal weight.

We now ask whether a given $\pfvoa{k}$-module $\Mod{D}$ lifts to an untwisted $\epfvoa{k}$-module
\begin{equation}
\Ind{\Mod{D}} = \epfvoa{k} \fus{\pfvoa{k}} \Mod{D}.
\end{equation}
Assuming that $\epfvoa{k}$ is $\ZZ$-graded for simplicity, the answer is that it lifts if and only if the monodromy of $\cosetmod{\lambda}$ and $\Mod{D}$ is trivial for all $\lambda \in \lat{L}$ \cite{CreSim15,KirqAn02,HuaBra15}.  When these monodromies are scalars, which happens for example when $\End \Mod{D} \cong \CC$, this triviality is decided by conformal weight considerations alone.  We record this simple conclusion for later use.

\begin{result} \label{res:sclift}
If $\epfvoa{k}$ is $\ZZ$-graded by conformal weight, then the lift $\Ind{\Mod{D}}$ of a $\pfvoa{k}$-module $\Mod{D}$ with $\End \Mod{D} \cong \CC$ is an (untwisted) $\epfvoa{k}$-module if and only if it is $\ZZ$-graded.  Moreover, if $\Mod{D}$ is irreducible as a $\pfvoa{k}$-module and it lifts, then $\Ind{\Mod{D}}$ is irreducible as a $\epfvoa{k}$-module.
\end{result}
\noindent We remark that the second statement above is quite general, but must be interpreted with care when working with simple current extensions of \svoas{}.  Then, the statement fails if one is working over $\ZZ_2$-graded modules (as one customarily does in this situation).  Specific counterexamples may be found by considering the $N=1$ superconformal minimal model of central charge $1$ whose order $2$ simple current extension is the $N=2$ superconformal minimal model of level $1$.

Finally, to compute the (Grothendieck) fusion rules of $\pfvoa{k}$ and $\epfvoa{k}$, we also need the fact that the induction functor $\Ind{}$ is a tensor functor of vertex tensor categories. This is the main theorem of \cite{CreTen17} and was previously conjectured for simple current extensions in \cite{RidVer14}. For this computation, we need a further assumption. 

For $k$ admissible, there is a category $\slcat{k}$ of finite-length $\slvoa{k}$-modules that we define explicitly below after \cref{relevantsimples}.  If $\slcat{k}$ is closed under fusion products and fusing with any module defines an exact endofunctor on it, then fusion descends to a product on the Grothendieck group of $\slcat{k}$ (in which a module is identified with the sum of its composition factors).  We shall refer to the decomposition of a given Grothendieck fusion product into images of irreducibles as a \emph{Grothendieck fusion rule} and to the multiplicities of said irreducibles as \emph{Grothendieck fusion coefficients}.
\begin{assum} \label{ass:fusion}
For $k$ admissible, the Grothendieck fusion rules of $\slcat{k}$ are well defined and the Grothendieck fusion coefficients are computed by the \emph{standard Verlinde formula} of \cite{CreLog13,RidVer14}.
\end{assum}
\noindent With this highly non-trivial assumption, the Grothendieck fusion coefficients of $\slvoa{k}$ have been computed in 
\cite{CreMod12,CreMod13}, 
see also \cref{sec:sl2fus}.  The results are consistent with the fusion rules that have been computed \cite{GabFus01,RidFus10} for $k=-\frac{4}{3}$ and $-\frac{1}{2}$.  We remark that this assumption is actually a theorem for the category of ordinary $\slvoa{k}$-modules, for $k$ admissible, by \cite[Cor.~7.7]{CreBra17}.

With this assumption, we can compute the Grothendieck fusion rules for $\pfvoa{k}$ and $\epfvoa{k}$ from those of $\slvoa{k}$ using the following result.
\begin{result}\label{res:fus}
For any $\pfvoa{k}$-modules $\Mod{D}$ and $\Mod{E}$, choose $\delta, \varepsilon \in \CC \otimes_{\ZZ} \rlat$ such that $\Ind{\left(\fock{\delta}\otimes \Mod{D}\right)}$ and $\Ind{\left(\fock{\varepsilon}\otimes \Mod{E}\right)}$
are $\savoa{k}{\alg{g}}$-modules. Then,
\begin{equation}
\Ind{(\fock{\delta}\otimes \Mod{D})} \fus{\savoa{k}{\alg{g}}} \Ind{(\fock{\varepsilon}\otimes \Mod{E})}
\cong \Ind{\brac[\big]{\fock{\delta+\varepsilon}\otimes (\Mod{D} \fus{\pfvoa{k}} \Mod{E})}}.
\end{equation}
Moreover, if both $\Ind{\Mod{D}}$ and $\Ind{\Mod{E}}$ are $\epfvoa{k}$-modules, then
\begin{equation}
\Ind{\Mod{D}}  \fus{\epfvoa{k}} \Ind{\Mod{E}} \cong  \Ind{(\Mod{D}  \fus{\pfvoa{k}} \Mod{E})}.
\end{equation}
\end{result}

\subsection{Outline and results}

We begin in \cref{sec:sl2} with a detailed review of the admissible-level simple \voas{} $\slvoa{k}$, their representation theories (category $\categ{O}$ and beyond), and their Grothendieck fusion rules.  This serves to fix notation and conventions as well as collect the various results that will be needed for what follows.  We adopt here the language of the \emph{standard module formalism} (standard, typical and atypical) introduced for \lcfts{} in \cite{CreLog13,RidVer14}.  As it is also convenient for describing the representation theory of the parafermion and extended parafermion coset \voas{}, we use this language throughout.

The parafermion cosets $\pfvoa{k}$, with $k$ admissible and negative, are analysed in \cref{sec:pf}.  We first determine the decompositions of the standard and irreducible atypical $\slvoa{k}$-modules as $\hvoa \otimes \pfvoa{k}$-modules, thereby deducing the characters of the standard and irreducible atypical $\pfvoa{k}$-modules (\cref{cosetEtype,charLcoset}).  The Grothendieck fusion rules of the irreducible $\pfvoa{k}$-modules are then presented (\cref{prop:pffusL,prop:pfgrfus}).  We conclude this section with a brief discussion of three ``small'' examples.  After recalling that $\pfvoa{-1/2}$ and $\pfvoa{-4/3}$ have been previously identified \cite{AdaCon05,RidSL210} with the singlet \voas{} $\singvoa{1,2}$ and $\singvoa{1,3}$, respectively, we concentrate on $\pfvoa{-2/3}$ and find that it is isomorphic to the bosonic orbifold of the $N=1$ supersinglet \svoa{} $\ssingvoa{1,3}$ introduced by Adamovi\'{c} and Milas \cite{AdaN=109}.

\cref{sec:epf} then addresses the extended parafermion cosets $\epfvoa{k}$, again assuming that $k$ is admissible and negative.  Identifying $\epfvoa{k}$ as an infinite-order simple current extension of $\pfvoa{k}$, we show that the former has only finitely many irreducible modules (\cref{epffinite}).  After reporting on the Grothendieck fusion rules (\cref{prop:epffusL,prop:epfgrfus}) and noting that $\epfvoa{-1/2}$, $\epfvoa{-4/3}$ and $\epfvoa{-2/3}$ coincide with certain explicitly described orbifolds of the (super)triplets $\tripvoa{1,2}$, $\tripvoa{1,3}$ and $\stripvoa{1,3}$, respectively, we compute the irreducible $\epfvoa{k}$-characters (\cref{prop:epfstchars,prop:epfirrchar=theta,rem:D=E+L}).

We conclude with a detailed investigation of the modular properties of the $\epfvoa{k}$-characters.  Those of the standard modules are easily deduced (\cref{prop:epfstcharmod}), whilst those of the atypical irreducibles are much more subtle.  Our main result here is \cref{thm:atypicalmodularity} which states that the parts of the atypical irreducible characters of modular weight $1$ define a finite-dimensional vector-valued modular form.  It also gives an explicit upper bound for the dimension that we believe is sharp.  As the remaining parts may be expressed as linear combinations of standard characters, it follows that the irreducible $\epfvoa{k}$-characters and the objects 
obtained by multiplying the weight $1$ parts of the atypical irreducible characters by the modular parameter $\tau$, together span a finite-dimensional representation of the modular group.  We expect that the latter objects correspond to pseudotraces.  As this is precisely how the irreducible characters of a $C_2$-cofinite \voa{} \cite{MiyMod04} behave under modular transformations, we conjecture that the $\epfvoa{k}$ are $C_2$-cofinite for all $k$ admissible and negative (noting that this is already known for $k=-\frac{1}{2}$, $-\frac{4}{3}$ and $-\frac{2}{3}$).  We intend to prove this $C_2$-cofiniteness in a sequel.

\section*{Acknowledgements}

We thank Shashank Kanade and Andrew Linshaw for discussions relating to the results presented here.
J.~A. is supported by a Doctoral Research Scholarship from the Fonds de Recherche Nature et Technologies de Qu\'ebec (184131).  T.~C. is supported by the Natural Sciences and Engineering Research Council of Canada (RES0020460).
DR's research is supported by the Australian Research Council Discovery Project DP160101520 and the Australian Research Council Centre of Excellence for Mathematical and Statistical Frontiers CE140100049.

\section{The simple \voa{} $\slvoa{k}$} \label{sec:sl2}

In this section, we review those aspects of the admissible-level \voas{} $\slvoa{k}$ that will be required for our analysis of the corresponding parafermion cosets.  Our main sources for this are \cite{AdaVer95,CreMod13,RidRel15} to which we refer for additional details and references.

\subsection{Representation theory} \label{sec:sl2reps}

Recall the standard basis $\set{e,h,f}$ of $\SLA{sl}{2}$ in which the Cartan subalgebra $\alg{h}$ is spanned by $h$.  The non-zero commutators are specified by
\begin{equation}
	\comm{h}{e} = 2e, \qquad \comm{e}{f} = h, \qquad \comm{h}{f} = -2f
\end{equation}
and the Killing form is normalised so that
\begin{equation}
	\inner{h}{h} = 2, \qquad \inner{e}{f} = 1.
\end{equation}
The affine Kac-Moody algebra $\AKMA{sl}{2}$ is then defined to be the universal central extension of the loop algebra $\SLA{sl}{2} \otimes \CC[x,x^{-1}]$.  We choose generators $e_n$, $h_n$ and $f_n$ in $\AKMA{sl}{2}$, for $n \in \ZZ$, such that they project onto $e \otimes x^n$, $h \otimes x^n$ and $f \otimes x^n$, respectively, upon quotienting by the centre.  The central element is denoted by $K$ and its eigenvalue is the level $k$.

Fix $k \in \CC$ and consider the Verma module of $\AKMA{sl}{2}$ whose highest weight is $(k-\lambda) \omega_0 + \lambda \omega_1$, for some $\lambda \in \CC$, where $\omega_0$ and $\omega_1$ are the fundamental weights.  Its irreducible quotient will be denoted by $\sldis{\lambda}^+$.  When $\lambda \in \NN$, we shall also denote this irreducible quotient by $\slirr{\lambda+1}$ to emphasise that its ground states (the states of minimal conformal weight) form a finite-dimensional subspace of dimension $\lambda+1$.  The irreducible $\AKMA{sl}{2}$-module $\sldis{0}^+ = \slirr{1}$ is well known to carry the structure of a simple \voa{}, which we denote by $\slvoa{k}$, provided that $k \neq -2$.  We exclude this \emph{critical} level from considerations.

We recall certain properties of the simple \voa{} $\slvoa{k}$ and its modules, when the level $k$ is \emph{admissible}:
\begin{equation}
	k+2 = t = \frac{u}{v}, \qquad u \in \ZZ_{\ge 2}, \quad v \in \ZZ_{\ge 1}, \quad \gcd \set{u,v} = 1.
\end{equation}
We shall also define, for later convenience, $w = -kv = 2v-u$.  The level $k$ being admissible is equivalent to the level $k$ universal \voa{} associated to $\AKMA{sl}{2}$ being non-simple.  The central charge of $\slvoa{k}$ is $c = 3 - \frac{6}{t}$.

\begin{thm}[Adamovi\'c-Milas \cite{AdaVer95} and Dong-Li-Mason \cite{DonVer97}] \label{hwsimples}
	Let $k = -2 + \frac{u}{v}$ be an admissible level and let
	\begin{equation} \label{eq:DefLambda}
		\lambda_{r,s} = r - 1 - ts.
	\end{equation}
	Then, the \hw{} $\slvoa{k}$-modules are exhausted, up to isomorphism, by those of the $\slirr{r} = \slirr{\lambda_{r,0}+1}$, for $r = 1,\dots,u-1$, and the $\sldis{r,s}^+ = \sldis{\lambda_{r,s}}^+$, for $r = 1,\dots,u-1$ and $s = 1,\dots,v-1$.
\end{thm}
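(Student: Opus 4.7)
The strategy is to exploit the fact that $\slvoa{k}$ is the simple quotient of the universal affine \voa{} $\uavoa{k}{\SLA{sl}{2}}$ by its maximal ideal $\Mod{I}$. A simple \hwm{} $L(\lambda)$ of $\AKMA{sl}{2}$ at level $k$ descends to a $\slvoa{k}$-module if and only if $\Mod{I}$ acts trivially on the top-graded subspace of $L(\lambda)$. Equivalently, by Zhu's theorem, simple lower-bounded $\slvoa{k}$-modules correspond bijectively to simple modules over the Zhu algebra $A(\slvoa{k})$; since $A(\uavoa{k}{\SLA{sl}{2}}) \cong U(\SLA{sl}{2})$ canonically, the task reduces to computing the image of $\Mod{I}$ in $U(\SLA{sl}{2})$ and classifying those simple $\SLA{sl}{2}$-modules on which it vanishes.

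The first step is to show that at admissible level $k = -2 + u/v$ the maximal ideal $\Mod{I}$ is principal, generated by a single nonzero singular vector $\Omega$ in the vacuum Verma module of $\AKMA{sl}{2}$. Existence of $\Omega$ can be read off from the Kac-Kazhdan criterion, and the explicit Malikov-Feigin-Fuchs formula presents $\Omega$ as a product of fractional powers of $e_{-1}$ and $f_0$ whose exponents become integers precisely when $k$ is admissible. Establishing that this one singular vector generates all of $\Mod{I}$ is the principal technical input; it is typically verified by comparing the characters of $\slvoa{k}$ and of the quotient $\uavoa{k}{\SLA{sl}{2}} / \langle \Omega \rangle$.

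The main obstacle is then to identify which simple highest-weight modules are annihilated by the Zhu image $[\Omega]$. One route---essentially the approach of Adamovi\'{c}--Milas---is to act the MFF form of $\Omega$ directly on the generating vector $v_\lambda$ of $L(\lambda)$; the fractional-power formula produces an action of the form $P(\lambda) v_\lambda$ with
\begin{equation*}
P(\lambda) \propto \prod_{r=1}^{u-1} \prod_{s=0}^{v-1} (\lambda - \lambda_{r,s}),
\end{equation*}
and the condition $P(\lambda) = 0$ cuts out exactly the claimed Kac table. A second route is to exhibit each candidate $\slirr{r}$ and $\sldis{r,s}^+$ as a genuine $\slvoa{k}$-module via free-field (Wakimoto) constructions, and then to rule out any further simples by a counting argument based on the conformal weight of $\Omega$. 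Once the admissible weight set is pinned down, the statement follows: for $s = 0$ the weight $\lambda_{r,0} = r - 1 \in \NN$ gives the finite-ground-state module $\slirr{r}$, while for $1 \le s \le v - 1$ the weight $\lambda_{r,s}$ is non-integral (as $\gcd(u,v) = 1$), so the ground state is infinite-dimensional and we obtain $\sldis{r,s}^+$.
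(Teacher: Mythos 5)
The paper offers no proof of this statement: it is imported verbatim from the cited references \cite{AdaVer95,DonVer97}, so there is nothing internal to compare your argument against. That said, your sketch is a faithful outline of the strategy actually used in those references: pass to the Zhu algebra $A(\uavoa{k}{\SLA{sl}{2}}) \cong U(\SLA{sl}{2})$, use the fact that at admissible level the maximal ideal of the universal affine \voa{} is generated by a single singular vector $\Omega$ (given by the Malikov--Feigin--Fuchs formula), and determine the zero locus of the resulting polynomial condition on highest weights, which is exactly the set $\set{\lambda_{r,s}}$ of \eqref{eq:DefLambda}. One point deserves more care than your phrasing suggests: the Zhu image $[\Omega]$ has $\SLA{sl}{2}$-weight $2(u-1) \neq 0$, so applying it directly to the \hwv{} $v_\lambda$ gives zero for trivial weight reasons and imposes no condition. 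The correct statement is that the \emph{two-sided} ideal generated by $[\Omega]$ must annihilate $L(\lambda)$; one therefore evaluates a weight-zero element of that ideal, e.g.\ $(\operatorname{ad} f)^{u-1}[\Omega]$, on $v_\lambda$ to obtain the polynomial $P(\lambda)$, and then argues (using simplicity of $L(\lambda)$ and the existence of the admissible modules) that the vanishing of $P(\lambda)$ is equivalent to the vanishing of the whole ideal on $L(\lambda)$. Your closing observation --- that $\lambda_{r,0} = r-1 \in \NN$ yields the finite-ground-state modules $\slirr{r}$ while $\lambda_{r,s}$ for $1 \le s \le v-1$ is non-integral because $\gcd\set{u,v}=1$, yielding the $\sldis{r,s}^+$ --- is correct and completes the identification with the list in the theorem.
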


\begin{remark}
	The weights $(k - \lambda_{r,s}) \omega_0 + \lambda_{r,s} \omega_1$, for $r = 1,\dots,u-1$ and $s = 0,\dots,v-1$, are the highest weights of the admissible level-$k$ $\AKMA{sl}{2}$-modules, as defined by Kac and Wakimoto \cite{KacMod88}. This original definition of admissibility was motivated by the observation that these irreducible modules admit a generalisation of the Weyl-Kac character formula for integrable modules.
\end{remark}

The conformal weight of the ground states of each of these \hw{} $\slvoa{k}$-modules is determined by its highest weight, hence by the parameters $r$ and $s$.  Specifically, the conformal weight of each ground state of $\sldis{r,s}^+$ is given by
\begin{equation} \label{eq:DefDelta}
	\Delta_{r,s} = \frac{(r-ts)^2-1}{4t} = \frac{(vr-us)^2-v^2}{4uv},
\end{equation}
where $r = 1,\dots,u-1$ and $s = 0,\dots,v-1$.  We note the symmetries
\begin{equation} \label{eq:KacSymm}
	\lambda_{u-r,v-s} = -\lambda_{r,s} - 2, \qquad \Delta_{u-r,v-s} = \Delta_{r,s}.
\end{equation}

It turns out that for non-integer admissible levels, these being those with $v>1$, it is not sufficient to consider only \hwms{}.  Instead, one is forced \cite{GabFus01,RidFus10} to broaden the class of modules to include, in particular, the \emph{relaxed} \hwms{}.  These are defined as modules that are generated by a relaxed \hwv{}, this being a weight vector that is annihilated by every mode with positive index.  For $\AKMA{sl}{2}$, this means an eigenstate of $h_0$ (of definite level) which is annihilated by the positive modes $e_n$, $h_n$ and $f_n$, $n > 0$.  A normal \hwv{} for $\AKMA{sl}{2}$ is therefore a relaxed \hwv{} that happens to be also annihilated by $e_0$.

Before presenting the classification of irreducible relaxed \hw{} $\slvoa{k}$-modules, we recall that the Cartan subalgebra-preserving automorphisms of $\AKMA{sl}{2}$ define an infinite group, isomorphic to $\ZZ_2 \ltimes \ZZ$, of invertible functors acting on $\slvoa{k}$-modules.  This action is called \emph{twisting} by the automorphism.  This group is isomorphic to the affine Weyl group, but the free part should actually be identified with translations by the dual of the root lattice $\rlat$ rather than the coroot lattice \cite{RidSL208}.  The torsion part of this group has a generator $\conjaut$ called \emph{conjugation} that may be identified with the generator of the Weyl group of $\SLA{sl}{2}$.  Twisting by conjugation therefore negates $\SLA{sl}{2}$-weights but leaves conformal weights (and the level) unchanged.

Consider now an irreducible weight module over $\SLA{sl}{2}$ that is neither highest- nor \lw{}.  Identifying $\SLA{sl}{2}$ with the horizontal subalgebra of $\AKMA{sl}{2}$, we extend this to a module over the subalgebra generated by the modes of non-negative index by requiring that its level is $k$ and that the positive modes of $\AKMA{sl}{2}$ act trivially.  Inducing to an $\AKMA{sl}{2}$-module now results in a relaxed \hwm{} that is determined by the class $\lambda \omega_1 + \rlat$ of its $\SLA{sl}{2}$-weights, modulo $\rlat$, and the conformal weight $\Delta$ of its ground states, which is fixed by the eigenvalue of the quadratic Casimir on the original irreducible weight module.

We shall identify the root lattice $\rlat$ of $\SLA{sl}{2}$ with $2\ZZ$ throughout and will frequently abuse notation by identifying $\lambda \in \CC$ with $\lambda \omega_1 \in \alg{h}^*$.  It follows that the level $k$ relaxed \hwm{} just constructed is parametrised by $\lambda + \rlat \in \alg{h}^* / \rlat = \CC / 2\ZZ$ and $\Delta \in \CC$.  Let $\slrel{\lambda}{}$ denote the unique irreducible quotient of this relaxed \hwm{} so that $\slrel{\lambda}{} = \slrel{\mu}{}$ if $\lambda = \mu \pmod{\rlat}$.  It is likewise a relaxed \hwm{}.

\begin{thm}[Adamovi\'c-Milas \cite{AdaVer95}, see also \cite{RidRel15}] \label{rhwsimples}
	Let $k = -2 + \frac{u}{v}$ be an admissible level. Then, the irreducible relaxed \hw{} $\slvoa{k}$-modules are exhausted, up to isomorphism, by the following list:
	\begin{itemize}
		\item The $\slirr{r}$, for $r = 1,\dots,u-1$;
		\item The $\sldis{r,s}^+$, for $r = 1,\dots,u-1$ and $s = 1,\dots,v-1$;
		\item The conjugates $\sldis{r,s}^- = \conjmod{\sldis{r,s}^+}$, for $r = 1,\dots,u-1$ and $s = 1,\dots,v-1$;
		\item The $\slrel{\lambda}{r,s}$, for $r = 1,\dots,u-1$, $s = 1,\dots,v-1$ and $\lambda \in \alg{h}^*$ with $\lambda \neq \lambda_{r,s}, \lambda_{u-r,v-s} \pmod{\rlat}$.
		\end{itemize}
	Apart from the identifications $\slrel{\lambda}{r,s} = \slrel{\lambda}{u-r,v-s}$, that follow trivially from \eqref{eq:KacSymm}, and $\slrel{\lambda}{r,s} = \slrel{\mu}{r,s}$, if $\lambda = \mu \pmod{\rlat}$, the modules in this list are all mutually non-isomorphic.
\end{thm}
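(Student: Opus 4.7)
The plan is to apply Zhu's theorem to reduce the classification of irreducible relaxed \hw{} $\slvoa{k}$-modules to the classification of irreducible weight modules over Zhu's algebra $A(\slvoa{k})$. The point is that the ground-state subspace of an irreducible relaxed \hwm{} of $\slvoa{k}$ is, by construction, an irreducible weight $\SLA{sl}{2}$-module on which the image in $A(\slvoa{k})$ of every null state of $\slvoa{k}$ must act by zero; conversely, any such $\SLA{sl}{2}$-module extends, by inducing to the non-negative half of $\AKMA{sl}{2}$ at level $k$ and taking the unique irreducible quotient, to an irreducible relaxed \hwm{} of $\slvoa{k}$. The classification therefore splits into two subproblems: compute $A(\slvoa{k})$ explicitly, and then classify its simple weight modules.

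First I would compute $A(\slvoa{k})$. The Zhu algebra of the universal affine \voa{} $\uavoa{k}{\SLA{sl}{2}}$ is $U(\SLA{sl}{2})$, and for $k$ admissible the kernel of the surjection $\uavoa{k}{\SLA{sl}{2}} \twoheadrightarrow \slvoa{k}$ is generated, as a \voa{} ideal, by a single \sv{} $\chi$. The image $[\chi]$ in $A(\slvoa{k}) = U(\SLA{sl}{2})/\langle [\chi]\rangle$ is a polynomial $P_{k}(h,\Omega)$ in the Cartan element $h$ and the quadratic Casimir $\Omega$, and the content of the Adamovi\'c-Milas analysis is that $P_{k}$ factors in such a way that its zero locus on joint $(h,\Omega)$-eigenvectors precisely detects the admissible \hw{} data: the \hwvs{} with highest weight $\lambda_{r,s}$ from \eqref{eq:DefLambda} (so Casimir $\tfrac12 \lambda_{r,s}(\lambda_{r,s}+2)$, corresponding to conformal weight $\Delta_{r,s}$ of \eqref{eq:DefDelta}).

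Next I would classify the irreducible weight $\SLA{sl}{2}$-modules on which $P_{k}(h,\Omega)$ acts trivially, invoking Mathieu's classification: every irreducible weight $\SLA{sl}{2}$-module with finite-dimensional weight spaces is either a \hwm{}, a \lwm{}, or a dense module parametrised by a coset in $\alg{h}^{*}/\rlat$ together with the scalar by which $\Omega$ acts. For the \hw{} case, the vanishing of $P_{k}$ forces the highest weight into the list $\{\lambda_{r,s}\}$, which yields the $\sldis{r,s}^{+}$ (and the finite-dimensional $\slirr{r}$ at $s=0$). Applying the conjugation twist $\conjaut$ then produces the $\sldis{r,s}^{-} = \conjmod{\sldis{r,s}^{+}}$. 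For the dense case, only the Casimir eigenvalue is constrained, forcing $\Delta = \Delta_{r,s}$ for some $(r,s)$ with $s \ge 1$, while the coset $\lambda + \rlat$ is free except at the two residues $\lambda_{r,s}$ and $\lambda_{u-r,v-s} \pmod{\rlat}$, where the dense module reducibly contains a \hw{} or \lw{} submodule and so fails to be irreducible. The stated identifications $\slrel{\lambda}{r,s} = \slrel{\lambda}{u-r,v-s}$ then follow from \eqref{eq:KacSymm}, while $\slrel{\lambda}{r,s} = \slrel{\mu}{r,s}$ for $\lambda \equiv \mu \pmod{\rlat}$ is tautological from the parametrisation.

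The main obstacle will be the first step: proving that the maximal proper ideal of $\uavoa{k}{\SLA{sl}{2}}$ at admissible level is principal, and then extracting the polynomial $P_{k}(h,\Omega)$ representing $[\chi]$ explicitly enough to read off its zero set. This requires a delicate Poincar\'e{-}Birkhoff{-}Witt{-}type computation using the explicit Malikov-Feigin-Fuchs formula for $\chi$ together with the Zhu product. A secondary subtlety is separating the truly irreducible dense modules from those that degenerate into \hw{}/\lw{} pieces, which is what singles out the two excluded residues $\lambda_{r,s}, \lambda_{u-r,v-s}$ and guarantees that the list above is complete without redundancy.
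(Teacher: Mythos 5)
The paper offers no proof of this theorem---it is imported from Adamovi\'c--Milas \cite{AdaVer95} and Ridout--Wood \cite{RidRel15}---and your proposal faithfully reconstructs the strategy of those references: reduce via Zhu's theorem to simple weight modules over $A(\slvoa{k}) = U(\SLA{sl}{2})/\langle [\chi]\rangle$, evaluate the singular vector's image as polynomial conditions in the weight and the Casimir, and then run Mathieu's highest-weight/lowest-weight/dense trichotomy, with the two excluded residues $\lambda_{r,s},\lambda_{u-r,v-s} \pmod{\rlat}$ accounting for the dense modules that degenerate into one-sided weight modules. The only imprecision is that $[\chi]$ carries non-zero $\mathrm{ad}(h)$-weight $2(u-1)$, so the polynomial $P_k(h,\Omega)$ is extracted from weight-zero elements of the two-sided ideal it generates (equivalently, from its action on relaxed \hwvs{}) rather than being $[\chi]$ itself; this does not affect the argument, and your identification of the explicit evaluation of that polynomial as the genuinely hard step is exactly where the work of \cite{RidRel15} lies.
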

\begin{remark}
	The caveat that $\lambda \neq \lambda_{r,s}, \lambda_{u-r,v-s} \pmod{\rlat}$ in the classification of the irreducible relaxed \hw{} $\slvoa{k}$-modules arises from the fact that an $\slindrel{}$-type module with parameters $(\lambda_{r,s}; \Delta_{r,s})$ or $(\lambda_{u-r,v-s}; \Delta_{r,s})$ must be reducible.  Indeed, the irreducible weight $\SLA{sl}{2}$-module from which we induced must have either a highest- or \lwv{}.  Of course, one may also induce from reducible weight $\SLA{sl}{2}$-modules.  For each $r = 1,\dots,u-1$ and $s = 1,\dots,v-1$, one thereby arrives at two distinct relaxed \hw{} $\slvoa{k}$-modules, both reducible but indecomposable, by quotienting the induced module by the (unique) maximal proper submodule whose intersection with the space of ground states is zero.  We denote these $\slvoa{k}$-modules by $\slindrel{r,s}^+ = \slrel{\lambda_{r,s}}{r,s}^+$ and $\slindrel{r,s}^- = \slrel{\lambda_{r,s}}{r,s}^-$, noting that they are characterised, up to isomorphism, by the following non-split short exact sequences:
	\begin{equation} \label{es:DED}
		\dses{\sldis{r,s}^+}{\slindrel{r,s}^+}{\sldis{u-r,v-s}^-}, \qquad
		\dses{\sldis{r,s}^-}{\slindrel{r,s}^-}{\sldis{u-r,v-s}^+}.
	\end{equation}
	These exact sequences were originally stated in \cite{RidRel15}.  A rigorous justification will appear in \cite{KawRel18}.
\end{remark}
\begin{remark} \label{IdentConjMod}
	We observe that the $\slirr{r}$, $r=1,\dots,u-1$, are self-conjugate: $\conjmod{\slirr{r}} \cong \slirr{r}$.  The conjugates of the relaxed \hw{} $\slvoa{k}$-modules are given by $\conjmod{\slrel{\lambda}{r,s}} \cong \slrel{-\lambda}{r,s}$ and $\conjmod{\slindrel{r,s}^{\pm}} \cong \slindrel{r,s}^{\mp}$.
\end{remark}

While \cref{rhwsimples} classifies the irreducible $\slvoa{k}$-modules in the category of relaxed \hw{} $\AKMA{sl}{2}$-modules, it is still easy to construct irreducible $\slvoa{k}$-modules that are not isomorphic to those introduced so far.  This construction uses the free part of the automorphism group $\ZZ_2 \ltimes \ZZ$, the elements of which are called \emph{spectral flow} automorphisms.  We choose the generator $\sfaut$ of the free part as in \cite{CreMod13,RidRel15} so that
$\conjaut \sfaut = \sfaut^{-1} \conjaut$ and the following isomorphisms hold.
\begin{prop}[\cite{CreMod13}] \label{twistRules}
Fix an admissible level $k = -2 + \frac{u}{v}$ and assume that $v > 1$. Then, we have
\begin{equation} \label{eq:SFIdentifications}
	\begin{gathered}
		\sfmod{}{\slirr{r}} \cong \sldis{u-r,v-1}^+,\quad \sfmod{-1}{\slirr{r}} \cong \sldis{u-r,v-1}^-, \\
		\sfmod{-1}{\sldis{r,s}^+} \cong \sldis{u-r,v-1-s}^-,
	\end{gathered}
	\qquad
	\begin{gathered}
		r=1,\dots,u-1, \\
		r=1,\dots,u-1,\quad s=1,\dots,v-2. \\
	\end{gathered}
\end{equation}
Together with the isomorphisms of \cref{IdentConjMod}, these generate a complete set of isomorphisms among twists of the $\slvoa{k}$-modules introduced above.
\end{prop}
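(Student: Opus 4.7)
The plan is to exploit that spectral flow $\sfaut$ and conjugation $\conjaut$ are invertible auto-equivalences of the category of weight $\slvoa{k}$-modules, and therefore permute isomorphism classes of irreducibles. By \cref{rhwsimples}, every irreducible relaxed \hwm{} is determined up to isomorphism by the weight and conformal weight of a suitable generator, so it suffices to locate such a generator in each spectral-flowed module and match the invariants with the stated target via \cref{hwsimples}. To fix conventions, take $\sfaut$ to be the Cartan-preserving automorphism of $\AKMA{sl}{2}$ determined by $\sfaut(e_n) = e_{n-1}$ and $\sfaut(f_n) = f_{n+1}$; compatibility with $\comm{e_m}{f_n} = h_{m+n} + mK\delta_{m+n,0}$ then forces $\sfaut(h_n) = h_n - K\delta_{n,0}$, and preservation of $\comm{L_0}{X_n} = -nX_n$ with a standard normalisation fixes $\sfaut(L_0) = L_0 - \tfrac{1}{2}h_0 + \tfrac{K}{4}$. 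On $\sfmod{}{\Mod{M}}$, a vector of original $h_0$-weight $\lambda$ and $L_0$-weight $\Delta$ thereby has new weights $\lambda + k$ and $\Delta + \tfrac{\lambda}{2} + \tfrac{k}{4}$; on $\sfmod{-1}{\Mod{M}}$, they become $\lambda - k$ and $\Delta - \tfrac{\lambda}{2} + \tfrac{k}{4}$.

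For $\sfmod{}{\slirr{r}}$, the ground states of $\slirr{r}$ form an $r$-dimensional irreducible $\SLA{sl}{2}$-module; let $v_-$ denote its lowest-weight vector, of $h_0$-weight $-(r-1)$. On $\sfmod{}{\slirr{r}}$, the new $e_n$ acts as the old $e_{n+1}$ and the new $f_n$ as the old $f_{n-1}$, so the new positive modes all annihilate $v_-$ by virtue of it being a ground state, and the new $f_1$ (the old $f_0$) annihilates it precisely because it is the bottom of the finite $\SLA{sl}{2}$-representation on the top. Hence $v_-$ is an honest \hwv{}; its new $h_0$-weight $k-r+1$ matches $\lambda_{u-r,v-1}$ via $t=k+2$, and the analogous check for the conformal weight produces $\Delta_{u-r,v-1}$. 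Since spectral flow preserves irreducibility, \cref{hwsimples} identifies $\sfmod{}{\slirr{r}} \cong \sldis{u-r,v-1}^+$. The identification $\sfmod{-1}{\slirr{r}} \cong \sldis{u-r,v-1}^-$ then drops out from the defining relation $\conjaut\sfaut = \sfaut^{-1}\conjaut$ and the self-conjugacy $\conjmod{\slirr{r}} \cong \slirr{r}$ of \cref{IdentConjMod}, via
\begin{equation*}
\sfmod{-1}{\slirr{r}} \cong \sfmod{-1}{\conjmod{\slirr{r}}} \cong \conjmod{\sfmod{}{\slirr{r}}} \cong \conjmod{\sldis{u-r,v-1}^+} = \sldis{u-r,v-1}^-.
\end{equation*}

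For $\sfmod{-1}{\sldis{r,s}^+}$, let $v$ be the \hwv{} of $\sldis{r,s}^+$. On $\sfmod{-1}{\sldis{r,s}^+}$ the new $e_n$ acts as the old $e_{n-1}$ and the new $f_n$ as the old $f_{n+1}$, so $v$ is annihilated by all new positive modes and by the new $f_0$ (old $f_1$), but \emph{not} by the new $e_0$ (old $e_{-1}$): in other words $v$ is ``conjugate-highest-weight'', so that $\conjmod{\sfmod{-1}{\sldis{r,s}^+}}$ is a \hwm{} generated by $v$ with highest weight $-(\lambda_{r,s} - k) = k - \lambda_{r,s}$ and conformal weight $\Delta_{r,s} - \tfrac{\lambda_{r,s}}{2} + \tfrac{k}{4}$. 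A direct calculation using $t = k + 2$ with \eqref{eq:DefLambda} and \eqref{eq:DefDelta} reduces these to $\lambda_{u-r,v-1-s}$ and $\Delta_{u-r,v-1-s}$ respectively; thus $\conjmod{\sfmod{-1}{\sldis{r,s}^+}} \cong \sldis{u-r,v-1-s}^+$, and a further application of $\conjmod{\blank}$ yields the claim.

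Finally, the completeness clause follows because every additional twist, such as $\sfmod{n}{\sldis{r,s}^\pm}$ with $\abs*{n} \geq 2$, yields an irreducible module that is neither (relaxed) highest-weight nor a conjugate thereof, hence does not appear in \cref{rhwsimples}; no further identifications among listed modules can occur. The principal technical hurdle is the third isomorphism, where the \hwv{} of $\sldis{r,s}^+$ fails to remain \hw{} after $\sfmod{-1}{\blank}$ and the identification must be routed through conjugation. Once the shift formulas of the first paragraph are in place, however, the identities $k - \lambda_{r,s} = \lambda_{u-r,v-1-s}$ and $\Delta_{r,s} - \tfrac{1}{2}\lambda_{r,s} + \tfrac{k}{4} = \Delta_{u-r,v-1-s}$ reduce to routine algebra with $t = k+2$.
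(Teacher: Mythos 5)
The paper offers no proof of \cref{twistRules} at all --- it is imported wholesale from \cite{CreMod13} --- so your argument can only be judged on its own merits, and on those it holds up. Your weight-shift formulas are consistent with the paper's character transformation \eqref{eq:TwChar}, and the three displayed isomorphisms are established exactly as in the cited literature: one tracks where a distinguished generator goes under spectral flow, checks that it is again a (relaxed or conjugate) \hwv{}, and matches the resulting $(h_0, L_0)$-data against \eqref{eq:DefLambda} and \eqref{eq:DefDelta}. I verified the arithmetic: $k-(r-1)=\lambda_{u-r,v-1}$, $\Delta_{r,0}-\tfrac{r-1}{2}+\tfrac{k}{4}=\Delta_{u-r,v-1}$, $k-\lambda_{r,s}=\lambda_{u-r,v-1-s}$ and $\Delta_{r,s}-\tfrac{1}{2}\lambda_{r,s}+\tfrac{k}{4}=\Delta_{u-r,v-1-s}$ all reduce correctly using $tv=u$. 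The observation that the \hwv{} of $\sldis{r,s}^+$ becomes a \emph{lowest}-weight vector under $\sfmod{-1}{\blank}$, forcing the detour through conjugation, is precisely the right mechanism, and the deduction of the second isomorphism from $\conjaut\sfaut=\sfaut^{-1}\conjaut$ together with \cref{IdentConjMod} is clean.

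The one place you are thinner than you should be is the completeness clause. Reducing all questions to isomorphisms of the form $\sfmod{m}{\Mod{M}}\cong\Mod{N}$ with $\Mod{M},\Mod{N}$ in the base list (which invertibility of $\sfaut$ permits) is the right move, but you then only assert that $\sfmod{n}{\sldis{r,s}^{\pm}}$ with $\abs{n}\ge 2$ fails to be relaxed \hw{} or a conjugate thereof, without argument, and you say nothing about the typical modules $\slrel{\lambda}{r,s}$ or the atypical standards $\slindrel{r,s}^{\pm}$. The standard way to close this is via the weight support: for $\abs{m}\ge 1$ the module $\sfmod{m}{\slrel{\lambda}{r,s}}$ has conformal weights unbounded below, and for $\abs{m}\ge 2$ (or $\abs{m}\ge 1$ in the wrong direction) the flowed discrete-series modules acquire weight supports incompatible with any entry of \cref{rhwsimples} or its conjugate. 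This is routine but should be stated if the completeness assertion is to be counted as proved rather than quoted.
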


\begin{remark}
	When $v=1$, so that $k$ is a non-negative integer, the list of irreducible relaxed \hw{} $\slvoa{k}$-modules in \cref{rhwsimples} collapses to just the $\slirr{r}$, with $r=1,\dots,u-1=k+1$.  The corresponding \cfts{} are the rational \wzw{} models describing strings on $\SLG{SU}{2}$ \cite{WitNon84}.  In this case, the isomorphisms involving spectral flow are generated by $\sfmod{}{\slirr{r}} \cong \slirr{u-r}$.
\end{remark}

Because twisting by an automorphism is an invertible functor, it preserves structure.  In particular, twisting an irreducible module results in another irreducible module.  Moreover, since these automorphisms lift to automorphisms of the affine \emph{vertex algebra}, it is easy to see that each twist of an $\slvoa{k}$-module results in another $\slvoa{k}$-module.  Spectral flow therefore gives us an infinite set of new irreducible $\slvoa{k}$-modules for each of the irreducible $\slvoa{k}$-modules listed in \cref{rhwsimples}.

\begin{remark} \label{relevantsimples}
Fix an admissible level $k = -2 + \frac{u}{v}$ and assume that $v > 1$. Then, the irreducible $\slvoa{k}$-modules of interest in this paper consist of the following:
\begin{itemize}
\item The $\sfmod{\ell}{\sldis{r,s}^+}$, with $\ell \in \ZZ$, $r = 1,\dots,u-1$ and $s = 1,\dots,v-1$;
\item The $\sfmod{\ell}{\slrel{\lambda}{r,s}}$, with $\ell \in \ZZ$, $r = 1,\dots,u-1$, $s = 1,\dots,v-1$ and $\lambda \in (\RR/2\ZZ) \setminus \set{\lambda_{r,s}, \lambda_{u-r,v-s}}$.
\end{itemize}
Again, aside from $\sfmod{\ell}{\slrel{\lambda}{r,s}} = \sfmod{\ell}{\slrel{\lambda}{u-r,v-s}} = \sfmod{\ell}{\slrel{\mu}{r,s}}$, where $\lambda = \mu \pmod{\rlat}$, the modules in this list are all mutually non-isomorphic.  The restriction to real $\SLA{sl}{2}$ weights in $\RR / 2 \ZZ = \alg{h}^*_{\RR} / \rlat$ is physically motivated and confirmed by modular considerations \cite{CreMod12,CreMod13}.
\end{remark}

We shall suppose throughout that we are working in the full subcategory $\slcat{k}$ of $\slvoa{k}$-modules in which the simple objects are the irreducibles of \cref{relevantsimples} and every object is a subquotient of the (iterated) fusion product of a finite collection of simple objects.  This, of course, presupposes that these fusion products are sufficiently nice (for instance, we believe that they all have finite composition length) and that each of their irreducible subquotients is isomorphic to one of the irreducibles introduced above.  For the admissible levels $k=-\frac{4}{3}$ and $-\frac{1}{2}$, the explicit fusion calculations reported 
in \cite{GabFus01,RidFus10} lead us to expect 
that this niceness continues to hold for all admissible levels (see \cref{sec:sl2fus} for some relevant results in this direction).

As discussed in \cite{CreSim15}, the category $\slcat{k}$ is interesting because it is expected to contain the projective covers of the irreducible $\slvoa{k}$-modules.  Whilst rigorously identifying these projective covers remains out of reach here (as is the case for almost all logarithmic \voas{}), 
the corresponding subcategory for the $\mathcal{W}(p)$-triplet algebras does contain all of the indecomposable projectives, according to \cite{NagTri11,TsuTen12}.

\begin{remark} \label{rem:whittaker}
	For $k$ admissible and non-integral, there are irreducible $\slvoa{k}$-modules besides those listed in \cref{rhwsimples} and their spectral flows.  In particular, there are the Whittaker-type modules constructed in \cite{AdaRea17}.  However, they seem to play no role in the modular properties of \cite{CreMod13} and, being non-weight, they cannot appear in fusion products of the irreducibles of \cref{rhwsimples} \cite{GabFus94}.  We shall therefore ignore these non-weight modules in what follows.
\end{remark}

\subsection{Characters and modularity} \label{sec:sl2chars}

The character of an $\AKMA{sl}{2}$-module $\Mod{M}$ is defined to be the following formal power series in $y$, $z$ and $q$:
\begin{equation}
	\fch{\Mod{M}}{y,z,q} = \traceover{\Mod{M}} y^k z^{h_0} q^{L_0-c/24}.
\end{equation}
At the level of characters, twisting an $\AKMA{sl}{2}$-module $\Mod{M}$ by conjugation or spectral flow yields
\begin{equation} \label{eq:TwChar}
	\fch{\conjmod{\Mod{M}}}{y,z,q} = \fch{\Mod{M}}{y,z^{-1},q}, \qquad
	\fch{\sfmod{\ell}{\Mod{M}}}{y,z,q} = \fch{\Mod{M}}{y z^{\ell} q^{\ell^2 / 4},z q^{\ell / 2}, q},
\end{equation}
assuming, of course, that $\Mod{M}$ has finite-dimensional weight spaces (so that characters exist).

For $u,v \in \ZZ_{\ge 2}$ coprime, let $\virminmod{u,v}$ denote the Virasoro minimal model \voa{} of central charge $1 - \frac{6 (v-u)^2}{uv}$ and, for $r = 1, \dots, u-1$ and $s = 1, \dots, v-1$, let $(r,s)$ denote the irreducible $\virminmod{u,v}$-module whose ground states have conformal weight
\begin{equation}
	\vdim{u,v}{r,s} = \frac{(vr-us)^2 - (v-u)^2}{4uv}.
\end{equation}
The character of this module will be denoted by $\vch{u,v}{r,s}{q}$.  For completeness, we give an explicit formula:
\begin{equation}
	\vch{u,v}{r,s}{q} = \frac{1}{\eta(q)} \sum_{n \in \ZZ} \sqbrac*{q^{(2uvn + vr-us)^2/4uv} - q^{(2uvn + vr+us)^2/4uv}}.
\end{equation}
\begin{prop} \label{plainslslchar}
	Fix an admissible level $k = -2 + \frac{u}{v}$ and assume that $v>1$.  Then, we have the following character formulae:
	\begin{subequations} \label{eq:sl2chars}
		\begin{align}
			\ch{\sfmod{\ell}{\slrel{\lambda}{r,s}}} &= \frac{y^k z^{\ell k} q^{\ell^2 k / 4} \vch{u,v}{r,s}{q}}{\eta(q)^2} \sum_{\mu \in \lambda + \rlat} z^{\mu} q^{\ell \mu / 2}, \label{eq:typchar} \\
			\ch{\sfmod{\ell}{\slindrel{r,s}^+}} &= \frac{y^k z^{\ell k} q^{\ell^2 k / 4} \vch{u,v}{r,s}{q}}{\eta(q)^2} \sum_{\mu \in \lambda_{r,s} + \rlat} z^{\mu} q^{\ell \mu / 2}, \label{eq:atypstchar} \\
			\ch{\sfmod{\ell}{\slirr{r}}} &= \sum_{s'=1}^{v-1} (-1)^{s'-1} \sum_{m=0}^{\infty} \brac*{\ch{\sfmod{2mv+s'+\ell}{\slindrel{r,s'}^+}} - \ch{\sfmod{2(m+1)v-s'+\ell}{\slindrel{u-r,v-s'}^+}}}, \label{eq:atypLchar} \\
			\ch{\sfmod{\ell}{\sldis{r,s}^+}} &= \sum_{s'=s+1}^{v-1} (-1)^{s'-s-1} \ch{\sfmod{s'-s+\ell}{\slindrel{r,s'}^+}} + (-1)^{v-1-s} \ch{\sfmod{v-s+\ell}{\slirr{u-r}}}. \label{eq:atypDchar}
		\end{align}
	\end{subequations}
	If $k<0$, then the infinite sum in \eqref{eq:atypLchar} converges in the sense of formal power series in $z$, meaning that the coefficient of each power of $z$ converges to a meromorphic function of $q$ (for $\abs{q} < 1$).
\end{prop}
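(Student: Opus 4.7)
The plan is to verify the four character identities in sequence, first at $\ell=0$ and then by invoking the spectral flow rule \eqref{eq:TwChar}, which substitutes $y \mapsto y z^\ell q^{\ell^2/4}$ and $z \mapsto z q^{\ell/2}$; this substitution is exactly what generates the prefactor $y^k z^{\ell k} q^{\ell^2 k/4}$ and the $q^{\ell\mu/2}$ shift inside every theta-like sum. For \eqref{eq:typchar}, I would apply the Kac-Wakimoto-type character formula for admissible-level $\slvoa{k}$-modules, adapted to the relaxed setting: the Verma-type module induced from the irreducible weight $\SLA{sl}{2}$-module with weights $\lambda + \rlat$ at conformal weight $\Delta_{r,s}$ has character $y^k \eta(q)^{-3}$ times a theta-like sum over $\mu \in \lambda + \rlat$, and the Felder-type resolution of the simple quotient $\slrel{\lambda}{r,s}$ replaces one factor of $\eta(q)^{-1}$ by $\vch{u,v}{r,s}{q}$, yielding the $\ell=0$ case. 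Formula \eqref{eq:atypstchar} follows by the same argument since $\slindrel{r,s}^+$ shares the weight-graded dimensions of a typical module at $\lambda = \lambda_{r,s}$; indeed, the ground states of the sub-module $\sldis{r,s}^+$ and the quotient $\sldis{u-r,v-s}^-$ in \eqref{es:DED} together contribute exactly one ground state to each weight space in $\lambda_{r,s} + \rlat$, and the non-semisimplicity of the extension is invisible at the character level.

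For the irreducible highest-weight characters \eqref{eq:atypLchar} and \eqref{eq:atypDchar}, I would invoke the BGG-type embedding pattern of admissible-level $\slvoa{k}$-modules. Iterating the short exact sequences \eqref{es:DED} along the infinite chain of spectral flow images of the discrete series produces a two-sided resolution of $\slirr{r}$ by spectral flow twists of the standard modules $\slindrel{r,s'}^+$ and $\slindrel{u-r,v-s'}^+$, with indexing matching precisely the double sum in \eqref{eq:atypLchar}; taking the Euler-Poincar\'{e} characteristic then gives the formula. For \eqref{eq:atypDchar}, the analogous resolution of $\sldis{r,s}^+$ is one-sided and truncates after $v-s-1$ steps at $\sfmod{v-s}{\slirr{u-r}}$, which appears as the isolated final summand.

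For the convergence claim when $k<0$, I would fix a power $z^N$ and observe that in each summand $\ch{\sfmod{\ell'}{\slindrel{r,s'}^+}}$ only the single $\mu \in \lambda_{r,s'} + \rlat$ satisfying $\ell' k + \mu = N$ contributes to $z^N$; the minimal $q$-exponent of this contribution works out to $-\ell'^2 k/4 + \ell' N/2 + O(1)$. Since $k<0$ makes the quadratic term positive, this exponent grows like $m^2$ as $m \to \infty$, so the series converges termwise to a meromorphic function of $q$ on $|q|<1$, as claimed.

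The main obstacle will be the derivation of the BGG-type resolutions in the second paragraph: producing honest exact complexes, rather than merely character identities, from iteration of \eqref{es:DED} requires detailed control of the singular vector structure of admissible-level Verma modules and of the submodule lattice of the spectral flow chain, which is classical but intricate. If one only seeks the stated character identities it suffices to track the alternating sum of graded dimensions, which is considerably easier, but one still has to confirm that the resolution actually reaches $\slirr{r}$ (respectively $\sldis{r,s}^+$) in the limit.
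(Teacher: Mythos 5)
First, a point of comparison: the paper does not actually prove this \lcnamecref{plainslslchar} --- it imports \eqref{eq:atypstchar}--\eqref{eq:atypDchar} from \cite[Props.~4 and 8]{CreMod13} and explicitly flags \eqref{eq:typchar} as having been stated there \emph{without} proof, with a proof for generic parameters appearing only in \cite{AdaRea17} and a full proof deferred to \cite{KawRel18}. Your route for the atypical formulae coincides with the derivation the paper records in \cref{rem:slres}: splicing the short exact sequences \eqref{es:DED} with the spectral flow identifications of \cref{twistRules} yields the resolutions \eqref{eq:slres}, whose Euler characteristics give \eqref{eq:atypLchar} and \eqref{eq:atypDchar}, and the general-$\ell$ statements then follow from \eqref{eq:TwChar}. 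Your convergence estimate for $k<0$ (the leading exponent $-\ell'^2 k/4 + \ell' N/2$ in the coefficient of $z^N$ growing quadratically along $\ell' = 2mv+s'+\ell$) is also correct, modulo the harmless caveat that for some $N$ no $\mu$ in the relevant coset contributes at all.

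The genuine gap is in your first paragraph, i.e.\ in \eqref{eq:typchar}. The assertion that ``the Felder-type resolution of the simple quotient $\slrel{\lambda}{r,s}$ replaces one factor of $\eta(q)^{-1}$ by $\vch{u,v}{r,s}{q}$'' is not a proof step but a restatement of the theorem: it is equivalent to the claim that every $h_0$-weight space of the irreducible relaxed module has graded dimension $\vch{u,v}{r,s}{q}/\eta(q)^2$ up to an overall $q$-power, i.e.\ that the string functions of $\slrel{\lambda}{r,s}$ are Virasoro minimal model characters. Establishing this requires identifying the maximal proper submodule of the relaxed Verma-type induced module --- concretely, the relaxed singular vectors of \cite{RidRel15} and the existence and exactness of the putative Felder complex --- and this is precisely the content that was missing in \cite{CreMod13} and was only supplied, for generic $\lambda$, by the explicit free-field construction of \cite{AdaRea17}. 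Your proposal identifies the BGG-type resolutions of the second paragraph as the main obstacle, but at the character level those reduce to telescoping sums of the exact sequences \eqref{es:DED}, which is comparatively routine; the genuinely hard and (at the time of writing) unproven step is \eqref{eq:typchar}, which your sketch assumes rather than derives. Note also a mild circularity to repair in your treatment of \eqref{eq:atypstchar}: you justify it via $\ch{\slindrel{r,s}^+} = \ch{\sldis{r,s}^+} + \ch{\sldis{u-r,v-s}^-}$, but \eqref{eq:atypDchar} expresses the right-hand side back in terms of the $\ch{\slindrel{}}$'s, so one needs an independent input --- in \cite{CreMod13} this is the Kac--Wakimoto admissible character formula for the $\sldis{r,s}^{\pm}$, expanded in the region appropriate to the relaxed category.
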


\begin{remark}
	The character formulae given in \cref{eq:atypstchar,eq:atypLchar,eq:atypDchar} were originally derived in \cite[Prop.~4 and Prop.~8]{CreMod13}, while \eqref{eq:typchar} was stated without proof.  Recently, a proof for generic values of $\lambda$, $r$ and $s$ was given in \cite{AdaRea17} using an explicit construction of the modules.  A full proof will appear in \cite{KawRel18}.
\end{remark}

\begin{remark} \label{slatypchar-}
	It is easy to check that $\ch{\sfmod{\ell}{\slindrel{r,s}^-}} = \ch{\sfmod{\ell}{\slindrel{u-r,v-s}^+}}$ using \eqref{es:DED} and the exactness of the spectral flow functor.
\end{remark}
\begin{remark}
	In the standard module formalism introduced in \cite{CreLog13,RidVer14}, the irreducibles $\sfmod{\ell}{\slrel{\lambda}{r,s}}$ are the \emph{typical} $\slvoa{k}$-modules.  The reducible but indecomposable modules $\sfmod{\ell}{\slindrel{r,s}^{\pm}}$ are examples of \emph{atypical} $\slvoa{k}$-modules.  Together, these two classes form the \emph{standard} modules of $\slvoa{k}$.  The $\sfmod{\ell}{\sldis{r,s}^+}$ are likewise atypical, but are not standard: their characters are expressible as infinite linear combinations of (atypical) standard characters.  However, these character formulae only converge as formal power series if $k<0$.  The question of what this means for $k>0$ will not be addressed here.
\end{remark}

\begin{remark} \label{rem:slres}
	The character formulae given above for the irreducible atypicals were deduced from the following resolutions \cite{CreMod13}:
	\begin{subequations} \label{eq:slres}
		\begin{align} \label{eq:slresL}
			\cdots &\lra \sfmod{3v-1}{\slindrel{r,v-1}^+} \lra \cdots \lra \sfmod{2v+2}{\slindrel{r,2}^+} \lra \sfmod{2v+1}{\slindrel{r,1}^+} \notag \\
			&\qquad\qquad \lra \sfmod{2v-1}{\slindrel{u-r,v-1}^+} \lra \cdots \lra \sfmod{v+2}{\slindrel{u-r,2}^+} \lra \sfmod{v+1}{\slindrel{u-r,1}^+} \notag \\
			&\qquad\qquad\qquad\qquad \lra \sfmod{v-1}{\slindrel{r,v-1}^+} \lra \cdots \lra \sfmod{2}{\slindrel{r,2}^+} \lra \sfmod{}{\slindrel{r,1}^+} \lra \slirr{r} \lra 0,
		\end{align}
		\begin{equation} \label{eq:slresD}
			0 \lra \sfmod{v-s}{\slirr{u-r}} \lra \sfmod{v-1-s}{\slindrel{r,v-1}^+} \lra \cdots \lra \sfmod{2}{\slindrel{r,s+2}^+} \lra \sfmod{}{\slindrel{r,s+1}^+} \lra \sldis{r,s}^+ \lra 0.
		\end{equation}
	\end{subequations}
	For $s=v-1$, the latter resolution reduces to the isomorphism $\sfmod{}{\slirr{u-r}} \cong \sldis{r,s}^+$.  One can obtain other resolutions by applying the conjugation and/or contragredient dual functors to \eqref{eq:slres} and these may lead to somewhat different looking character formulae.  For example, conjugating \eqref{eq:slresL} leads to
	\begin{align} \label{eq:atypLchar'}
		\ch{\sfmod{\ell}{\slirr{r}}} &= \sum_{s'=1}^{v-1} (-1)^{s'-1} \sum_{m=0}^{\infty} \brac*{\ch{\sfmod{-2mv-s'+\ell}{\slindrel{r,s'}^-}} - \ch{\sfmod{-2(m+1)v+s'+\ell}{\slindrel{u-r,v-s'}^-}}} \notag \\
		&= \sum_{s'=1}^{v-1} (-1)^{s'-1} \sum_{m=0}^{\infty} \brac*{\ch{\sfmod{-2mv-s'+\ell}{\slindrel{u-r,v-s'}^+}} - \ch{\sfmod{-2(m+1)v+s'+\ell}{\slindrel{r,s'}^+}}},
	\end{align}
	using \cref{IdentConjMod,slatypchar-}.
\end{remark}

\subsection{Fusion} \label{sec:sl2fus}

The fusion rules of the admissible-level $\slvoa{k}$-modules have only been (partially) computed for $k=-\frac{4}{3}$ \cite{GabFus01} and $k=-\frac{1}{2}$ \cite{RidSL208,RidFus10}.  However, the \emph{Grothendieck} fusion rules are known for all $k$ \cite{CreMod12,CreMod13}, subject to two assumptions.  The first is that fusing with an irreducible $\slvoa{k}$-module defines an exact functor on the module category $\slcat{k}$ (see \cref{sec:sl2reps}), so that the fusion product induces a ring structure on the Grothendieck group of $\slcat{k}$.  The second is that the structure constants of the Grothendieck fusion product are computed by the \emph{standard Verlinde formula} of \cite{CreLog13,RidVer14}, which we understood first in the example of $\VOA{V}_k\left(\SLSA{gl}{1}{1}\right)$ \cite{CreBra07,CreRel11}.  We shall assume that these Grothendieck fusion rules are correct, hence that the fusion rules are known up to ambiguities involving non-trivial extensions.  This was the content of \cref{ass:fusion}, as stated in \cref{sec:Schur}.

In general, we let $\fus{\VOA{V}}$ 
denote the fusion product of a given \voa{} $\VOA{V}$ and $\Grfus{\VOA{V}}$ its Grothendieck fusion product.  
The image of a $\VOA{V}$-module $\Mod{M}$ in the Grothendieck ring of $\VOA{V}$ shall be denoted by $\Gr{\Mod{M}}$.

Before stating the $\slvoa{k}$ fusion rules, it is convenient to introduce some notation for the fusion rules of the Virasoro minimal model \voa{} $\virminmod{u,v}$.  Let $\vfusco{r,s}{r',s'}{r'',s''}{u,v}$ denote the fusion coefficient involving the irreducible $\virminmod{u,v}$-modules $(r,s)$, $(r',s')$ and $(r'',s'')$, so that
\begin{equation} \label{eq:virminmodfus}
	(r,s) \fus{\virminmod{u,v}} (r',s') \cong \bigoplus_{(r'',s'')} \vfusco{r,s}{r',s'}{r'',s''}{u,v} (r'',s'').
\end{equation}
Here, the direct sum runs over the irreducible $\virminmod{u,v}$-modules $(r'',s'')$ in the Kac table
\begin{equation} \label{eq:DefKacTable}
	\kactable{u,v} = \frac{\set{1, \dots, u-1} \times \set{1, \dots, v-1}}{(r,s) \sim (u-r,v-s)}.
\end{equation}
In what follows, sums indexed by irreducible $\virminmod{u,v}$-modules will always be assumed to run over $\kactable{u,v}$.  Note that because $\virminmod{u,v}$ is rational, its fusion rules and Grothendieck fusion rules coincide.

\begin{proposition}[\protect{\cite[Props.~14, 15 and 18]{CreMod13}}] \label{prop:slfusL}
	Fix an admissible level $k = -2 + \frac{u}{v}$ and assume that $v>1$.  Given \cref{ass:fusion}, 
	the fusion rules of the irreducible $\slvoa{k}$-modules with the $\sfmod{\ell}{\slirr{r}}$ are then
	\begin{subequations}
		\begin{align}
			\sfmod{\ell}{\slirr{r}} \fus{\slvoa{k}} \sfmod{\ell'}{\slirr{r'}} &\cong \bigoplus_{r''=1}^{u-1} \vfusco{r,1}{r',1}{r'',1}{u,v} \sfmod{\ell+\ell'}{\slirr{r''}}, \label{fr:slLL} \\
			\sfmod{\ell}{\slirr{r}} \fus{\slvoa{k}} \sfmod{\ell'}{\sldis{r',s'}^+} &\cong \bigoplus_{r''=1}^{u-1} \vfusco{r,1}{r',1}{r'',1}{u,v} \sfmod{\ell+\ell'}{\sldis{r'',s'}^+}, \\
			\sfmod{\ell}{\slirr{r}} \fus{\slvoa{k}} \sfmod{\ell'}{\slrel{\lambda'}{r',s'}} &\cong \bigoplus_{r''=1}^{u-1} \vfusco{r,1}{r',1}{r'',1}{u,v} \sfmod{\ell+\ell'}{\slrel{r-1 + \lambda'}{r'',s'}}.
					\end{align}
	\end{subequations}
	When $v=1$, the fusion rules are instead given by \eqref{fr:slLL} alone.
\end{proposition}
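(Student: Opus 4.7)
The plan is to invoke the standard Verlinde formula of \cref{ass:fusion}, compute the Grothendieck fusion rules, and then upgrade them to fusion isomorphisms by ruling out non-trivial extensions among the summands on the right-hand side.

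First, I reduce to $\ell = \ell' = 0$. Spectral flow $\sfaut$ lifts to an automorphism of the vertex algebra $\slvoa{k}$, hence defines a tensor autoequivalence of $\slcat{k}$, so
\begin{equation}
	\sfmod{\ell}{\Mod{M}} \fus{\slvoa{k}} \sfmod{\ell'}{\Mod{N}} \cong \sfmod{\ell+\ell'}{\brac*{\Mod{M} \fus{\slvoa{k}} \Mod{N}}}
\end{equation}
for every $\Mod{M}, \Mod{N} \in \slcat{k}$. This separates the $\sfaut^{\ell+\ell'}$ factor and reduces the claim to the three Grothendieck computations with no spectral flow.

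For the Grothendieck-level computation I would feed the standard characters into the standard Verlinde formula, following the pattern of \cite[Props.~14, 15 and 18]{CreMod13}. The $\ch{\slirr{r}}$ contribution, obtained by applying the modular $S$-transform to the resolution \eqref{eq:slresL}, factorises as the Virasoro $S$-row entry for $(r,1)$ times a Heisenberg-like factor that depends on $r$ only through the class $\lambda_{r,0} = r-1$ modulo $\rlat$. Substituting this into the Verlinde sum, the Heisenberg factor contracts against its conjugate to produce a Kronecker delta in the $s'$-label of $\sldis{r',s'}^+$ and in the $\rlat$-coset of the $\lambda'$-parameter of $\slrel{\lambda'}{r',s'}$, while the shift $\lambda' \mapsto r - 1 + \lambda'$ in the third formula reflects the top $h_0$-weight of $\slirr{r}$. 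The Virasoro $S$-row yields the coefficient $\vfusco{r,1}{r',1}{r'',1}{u,v}$ as claimed.

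The main obstacle is the upgrade from Grothendieck fusion to fusion. The irreducibles appearing on the right-hand sides of \eqref{fr:slLL} and its two companions are pairwise non-isomorphic and differ only in the $r''$-label; their conformal and $h_0$-weight gradings are controlled by the Virasoro $(r'',1)$ structure and separate them into distinct blocks of $\slcat{k}$, so $\operatorname{Ext}^1$ between any two of them vanishes. The fusion product is therefore semisimple and must coincide with its Grothendieck image. The delicate point, consistent with \cref{ass:cat,ass:fusion}, is the block-theoretic assertion: it has been verified by hand for $k = -\tfrac{1}{2}$ and $k = -\tfrac{4}{3}$ in \cite{GabFus01, RidFus10}, and we expect it in general under the standing hypothesis that \cite{HuaLog10} applies to $\slcat{k}$.
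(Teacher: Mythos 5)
The paper offers no proof of this \lcnamecref{prop:slfusL}: it is imported wholesale from \cite[Props.~14, 15 and 18]{CreMod13}, as the bracketed citation in its header indicates. Your sketch follows the broad strategy of that reference (standard Verlinde formula for the Grothendieck fusion, then an upgrade to honest fusion rules), but two of your steps contain genuine gaps.

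First, the reduction to $\ell=\ell'=0$. Spectral flow does not fix the conformal vector, and even granting that it induces an autoequivalence of $\slcat{k}$, a tensor autoequivalence $F$ only yields $F(\Mod{M}) \fus{\slvoa{k}} F(\Mod{N}) \cong F(\Mod{M} \fus{\slvoa{k}} \Mod{N})$, i.e.\ the case $\ell=\ell'$. The additive identity $\sfmod{\ell}{\Mod{M}} \fus{\slvoa{k}} \sfmod{\ell'}{\Mod{N}} \cong \sfmod{\ell+\ell'}{(\Mod{M} \fus{\slvoa{k}} \Mod{N})}$ with independent $\ell,\ell'$ is strictly stronger: it is equivalent to the statement that $\sfmod{\ell}{\slirr{1}}$ is a simple current implementing spectral flow, $\sfmod{\ell}{\Mod{M}} \cong \sfmod{\ell}{\slirr{1}} \fus{\slvoa{k}} \Mod{M}$, which is part of what is being established (or assumed) in \cite{GabFus01,RidFus10,CreMod13}. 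It should be flagged as an input on the same footing as \cref{ass:fusion}, not derived from functoriality.

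Second, and more seriously, the upgrade from Grothendieck fusion to fusion. Your claim that the summands on the right-hand sides lie in distinct blocks because their conformal and $h_0$-weight gradings separate them is false in general. For fixed $s'$ one computes $\Delta_{r,s'}-\Delta_{r'',s'} = (r-r'')\bigl(v(r+r'')-2us'\bigr)/4u$ and $\lambda_{r,s'}-\lambda_{r'',s'}=r-r''$; taking $u=6$, $v=5$ (so $k=-\tfrac{4}{5}$) and $(r,r'')=(5,1)$ gives $\lambda_{5,s'}-\lambda_{1,s'}=4\in\rlat$ and $\Delta_{5,s'}-\Delta_{1,s'}=5-2s'\in\ZZ$, and both $\sldis{1,s'}^+$ and $\sldis{5,s'}^+$ occur in $\slirr{3} \fus{\slvoa{k}} \sldis{3,s'}^+$. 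So no grading separates these summands, and block separation is an $\operatorname{Ext}^1$ statement you have not established --- note that $\sldis{r,s}^+$ and $\sldis{u-r,v-s}^-$, which carry \emph{different} $r$-labels, do extend non-trivially by \eqref{es:DED}. The correct route for \eqref{fr:slLL} is that the $\slirr{r''}$ exhaust the ordinary modules and the category of ordinary modules at admissible level is semisimple (indeed \eqref{fr:slLL} is now a theorem by \cite{CreBra17}); for the other two rules one must separately rule out non-split extensions among the specific atypicals and typicals that appear, which \cite{CreMod13} does using the known extension structure of these modules rather than a grading argument.
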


\begin{remark}
	Because $(1,1)$ and $(u-1,1)$ are simple currents of $\virminmod{u,v}$, it follows from these fusion rules that the $\sfmod{\ell}{\slirr{1}}$ and $\sfmod{\ell}{\slirr{u-1}}$ are simple currents of the $\slvoa{k}$-module category $\slcat{k}$, for all $\ell \in \ZZ$.
\end{remark}

\begin{proposition}[\protect{\cite[Props.~13 and 18]{CreMod13}}] \label{prop:slgrfus}
	Fix an admissible level $k = -2 + \frac{u}{v}$ and assume that $v>1$.  Given \cref{ass:fusion}, 
	the Grothendieck fusion rules involving the atypicals $\sfmod{\ell}{\sldis{r,s}^+}$ and the typicals $\sfmod{\ell}{\slrel{\lambda}{r,s}}$ then include
	\begin{subequations}
		\begin{align}
			\Gr{\sfmod{\ell}{\slrel{\lambda}{r,s}}} &\Grfus{\slvoa{k}} \Gr{\sfmod{\ell'}{\slrel{\lambda' }{r',s'}}} \notag \\
			= &\sum_{(r'',s'')} \vfusco{r,s}{r',s'}{r'',s''}{u,v} \brac*{\Gr{\sfmod{\ell + \ell' + 1}{\slrel{\lambda + \lambda' - k}{r'',s''}}} + \Gr{\sfmod{\ell + \ell' - 1}{\slrel{\lambda + \lambda' + k}{r'',s''}}}} \notag \\
			+ &\sum_{(r'',s'')} \brac*{\vfusco{r,s}{r',s'-1}{r'',s''}{u,v} + \vfusco{r,s}{r',s'+1}{r'',s''}{u,v}} \Gr{\sfmod{\ell + \ell'}{\slrel{\lambda + \lambda'}{r'',s''}}}, \\
			\Gr{\sfmod{\ell}{\slrel{\lambda}{r,s}}} &\Grfus{\slvoa{k}} \Gr{\sfmod{\ell'}{\sldis{r',s'}^+}} = \sum_{(r'',s'')} \vfusco{r,s}{r',s'+1}{r'',s''}{u,v} \Gr{\sfmod{\ell + \ell'}{\slrel{\lambda + \lambda_{r',s'}}{r'',s''}}} \notag \\
			&\hphantom{\Grfus{\slvoa{k}} \Gr{\sfmod{\ell'}{\sldis{r',s'}^+}} =} + \sum_{(r'',s'')} \vfusco{r,s}{r',s'}{r'',s''}{u,v} \Gr{\sfmod{\ell + \ell' + 1}{\slrel{\lambda + \lambda_{r',s'+1}}{r'',s''}}}.
			\intertext{If $s+s'<v$, then we have in addition}
			\Gr{\sfmod{\ell}{\sldis{r,s}^+}} &\Grfus{\slvoa{k}} \Gr{\sfmod{\ell'}{\sldis{r',s'}^+}} = \sum_{(r'',s'')} \vfusco{r,s}{r',s'}{r'',s''}{u,v} \Gr{\sfmod{\ell + \ell' + 1}{\slrel{\lambda_{r'',s+s'+1}}{r'',s''}}} \notag \\
			&\hphantom{\Grfus{\slvoa{k}} \Gr{\sfmod{\ell'}{\sldis{r',s'}^+}} =} + \sum_{r''=1}^{u-1} \vfusco{r,1}{r',1}{r'',1}{u, v} \Gr{\sfmod{\ell + \ell'}{\sldis{r'',s+s'}^+}},
			\intertext{while if $s+s' \ge v$, then we have instead}
			\Gr{\sfmod{\ell}{\sldis{r,s}^+}} &\Grfus{\slvoa{k}} \Gr{\sfmod{\ell'}{\sldis{r',s'}^+}} = \sum_{(r'',s'')} \vfusco{r,s+1}{r',s'+1}{r'',s''}{u,v} \Gr{\sfmod{\ell + \ell' + 1}{\slrel{\lambda_{r'',s+s'+1}}{r'',s''}}} \notag \\
			&\hphantom{\Grfus{\slvoa{k}} \Gr{\sfmod{\ell'}{\sldis{r',s'}^+}} =} + \sum_{r''=1}^{u-1} \vfusco{r,1}{r',1}{r'',1}{u,v} \Gr{\sfmod{\ell + \ell' + 1}{\sldis{u-r'',s+s'-v+1}^+}}.
		\end{align}
	\end{subequations}
\end{proposition}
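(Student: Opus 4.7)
The plan is to reduce each Grothendieck fusion identity to the standard Verlinde formula applied to standard modules. Granting \cref{ass:fusion}, the Grothendieck product of any two standards (a typical $\sfmod{\ell}{\slrel{\lambda}{r,s}}$ or an atypical standard $\sfmod{\ell}{\slindrel{r,s}^+}$) is directly computable from the $S$-matrix worked out in \cite{CreMod13}, and its structure constants are precisely the Virasoro minimal model fusion coefficients $\vfusco{r,s}{r',s'}{r'',s''}{u,v}$ with appropriate shifts in spectral flow index and continuous weight label. Since typicals are already standards, the first identity of the proposition drops out immediately from this computation.

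For products involving the irreducible atypical $\sfmod{\ell'}{\sldis{r',s'}^+}$, the idea is to use the resolution \eqref{eq:slresD} to rewrite $\Gr{\sfmod{\ell'}{\sldis{r',s'}^+}}$ as a finite alternating sum of atypical standard classes together with the class $\Gr{\sfmod{v-s'+\ell'}{\slirr{u-r'}}}$. This latter class is then further expanded, via \eqref{eq:slresL}, as an infinite alternating sum of atypical standard classes, convergent as a formal power series thanks to $k<0$ (see \cref{plainslslchar}). Multiplying term by term using the Verlinde formula for standards and then telescoping, the vast majority of contributions cancel in pairs; the surviving terms can be reassembled, using \eqref{eq:slres} once more in the reverse direction, into the claimed finite right-hand sides. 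The character identity $\ch{\sfmod{\ell}{\slindrel{r,s}^-}} = \ch{\sfmod{\ell}{\slindrel{u-r,v-s}^+}}$ from \cref{slatypchar-} is invoked to re-express any $\slindrel{r,s}^-$ classes arising from conjugated resolutions in terms of $\slindrel{r,s}^+$ classes.

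For the atypical $\times$ atypical identities, the same expansion procedure is applied to both factors. The dichotomy $s+s' < v$ versus $s+s' \ge v$ emerges intrinsically from the telescoping: in the first regime the Virasoro fusion indexed by $(r,s)\times(r',s')$ lies in the unreflected portion of the Kac table $\kactable{u,v}$, so the surviving $\sldis{}^+$ summand appears with no additional spectral flow shift and its $s$-index is $s+s'$; in the second regime the Kac table reflection $(r'',s'') \sim (u-r'',v-s'')$ is triggered, producing the spectral flow shift by $1$, the replacement of the $s$-index by $s+s'-v+1$, and the effective reindexing $(r,s+1)\times(r',s'+1)$ of the Virasoro fusion coefficient. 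The main obstacle is precisely this telescoping bookkeeping: each atypical irreducible unpacks to an infinite alternating series of atypical standards and the ensuing triply-indexed sums must be shown to collapse cleanly while simultaneously tracking all Kac-table reflections and spectral flow/conjugation identifications. The hypothesis $k<0$ is essential throughout, as it guarantees formal convergence of the infinite expansions inside the Grothendieck ring at every stage of the manipulation.
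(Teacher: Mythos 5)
The paper does not actually prove this proposition: it is imported verbatim from \cite[Props.~13 and 18]{CreMod13}, and the only ``proof'' offered in the text is that citation together with \cref{ass:fusion}. Your sketch is therefore being measured against an external derivation rather than one appearing here. With that understood, your plan does track how the cited reference proceeds: the typical-by-typical rule comes straight from the standard Verlinde formula applied to the S-matrix data of the standard modules, and the atypical rules are obtained by feeding the resolutions \eqref{eq:slresL} and \eqref{eq:slresD} into the same formula, with $k<0$ ensuring convergence. The one place where your formulation is not quite right as stated is the claim that one can expand $\Gr{\sfmod{\ell}{\slirr{r}}}$ as an infinite alternating sum of atypical standard classes and then ``multiply term by term and telescope'' in the Grothendieck ring. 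The resolution \eqref{eq:slresL} is infinite, so the resulting identity holds only at the level of characters (equivalently, in a suitable completion), not in the Grothendieck group itself, where $\Gr{\sfmod{\ell}{\slirr{r}}}$ is a generator; the paper is explicit that these expansions converge only as formal power series. In \cite{CreMod13} this is handled by performing the infinite summation inside the S-matrix coefficients of the atypicals (the alternating sums of phases are summed in closed form) and then evaluating a single standard Verlinde sum, rather than by termwise multiplication followed by cancellation. Your route is equivalent to this provided you justify interchanging the Verlinde sum with the infinite resolution sum --- exactly the kind of manipulation that \cref{ass:fusion} is there to license. With that caveat made explicit, your sketch is a faithful reconstruction of the intended argument, including the correct attribution of the $s+s'<v$ versus $s+s'\ge v$ dichotomy to the Kac-table reflection and the attendant spectral flow shift.
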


\section{The parafermion coset $\pfvoa{k}$} \label{sec:pf}

In this section, we study the parafermion \voa{} $\pfvoa{k} = \Com{\hvoa}{\slvoa{k}}$, where $\hvoa$ denotes the Heisenberg \vosa{} generated by the field $h(z) \in \slvoa{k}$ and the level $k$ is admissible and negative.  We first decompose the characters of the $\slvoa{k}$-modules given in \cref{plainslslchar} into characters of $\hvoa \otimes \pfvoa{k}$-modules.  This relies on the Schur-Weyl duality result summarised in \cref{res:modules}.  We also obtain (Grothendieck) fusion rules for the $\pfvoa{k}$-modules, illustrating the results with examples.

\subsection{Decomposing characters}

We start by recalling that the irreducible modules of the Heisenberg \vosa{} $\hvoa$ are the Fock spaces $\fock{\mu}$, $\mu \in \CC$.  Including the level, and identifying the Heisenberg weight with the $\SLA{sl}{2}$-weight, the characters of the Fock spaces are given by
\begin{equation} \label{eq:fockchar}
	\fch{\fock{\mu}}{y,z,q} = \frac{y^k z^{\mu} q^{\mu^2 / 4k}}{\eta(q)},
\end{equation}
where $\eta$ is Dedekind's eta function.  As the central charge of $\hvoa$ is $1$, that of the parafermion \voa{} $\pfvoa{k}$ is $\tilde{c} = 2 - \frac{6}{t}$.  Denoting the Virasoro zero mode of $\pfvoa{k}$ by $\tilde{L}_0$ (which in the coset realisation is identified with $L_0 - \frac{1}{4k} h_0^2$), the character of a $\pfvoa{k}$-module $\Mod{M}$ is defined to be
\begin{equation}
	\fch{\Mod{M}}{q} = \traceover{\Mod{M}} q^{\tilde{L}_0 - \tilde{c}/24}.
\end{equation}

Since all the irreducible modules of $\slcat{k}$ may be resolved in terms of the standard $\slvoa{k}$-modules $\sfmod{\ell}{\slrel{\lambda}{r,s}}$ and $\sfmod{\ell}{\slindrel{r,s}^+}$, the first natural step is obtain the ``coefficients'' of the Fock space characters in the Schur-Weyl decomposition of these standards.
\begin{prop} \label{cosetEtype}
	Given \cref{ass:cat}, the standard $\slvoa{k}$-modules decompose into $\hvoa \otimes \pfvoa{k}$-modules as
	\begin{subequations}
		\begin{gather}
			\Res{\sfmod{\ell}{\slrel{\lambda}{r,s}}} \cong \bigoplus_{\mu \in \lambda + \rlat} \fock{\mu + \ell k} \otimes \pfrel{\mu}{r,s} \qquad \text{(\(\lambda \neq \lambda_{r,s}, \lambda_{u-r,v-s} \pmod{\rlat}\)),} \label{eq:typdecomp} \\
			\Res{\sfmod{\ell}{\slindrel{r,s}^+}} \cong \bigoplus_{\mu \in \lambda_{r,s} + \rlat} \fock{\mu + \ell k} \otimes \pfindrel{\mu}{r,s}^+, \qquad
			\Res{\sfmod{\ell}{\slindrel{r,s}^-}} \cong \bigoplus_{\mu \in \lambda_{u-r,v-s} + \rlat} \fock{\mu + \ell k} \otimes \pfindrel{\mu}{r,s}^-,
		\end{gather}
	\end{subequations}
	where the $\pfrel{\mu}{r,s}$ are irreducible \hw{} $\pfvoa{k}$-modules and the $\pfindrel{\mu}{r,s}^{\pm}$ are length $2$ indecomposable $\pfvoa{k}$-modules.  Their characters are given by
	\begin{equation} \label{eq:pfrelchar}
			\fch{\pfrel{\mu}{r,s}}{q} = \fch{\pfindrel{\mu}{r,s}^{\pm}}{q} = \frac{q^{-\mu^2 / 4k} \vch{u,v}{r,s}{q}}{\eta(q)}.
	\end{equation}
\end{prop}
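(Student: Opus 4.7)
The plan is to apply the Schur-Weyl-type decomposition of \cref{res:modules} to each standard $\slvoa{k}$-module, identify the unknown $\pfvoa{k}$-components by matching characters, and read the structural claims off from the structure-preservation clause.

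First I would verify that $\hvoa$ acts semisimply on each standard module. The relaxed \hw{} construction yields genuine (not merely generalised) $h_0$-weight spaces, and within each such weight space the vectors of lowest conformal weight are annihilated by all positive $h_n$ modes, so each $h_0$-eigenspace is a direct sum of Fock submodules; this property persists after conjugation and spectral flow by exactness of these functors. Reading off the set of $h_0$-weights from the character formulae \eqref{eq:typchar} and \eqref{eq:atypstchar} supplies the coset $\alpha + \rlat$ required in \cref{res:modules}: one takes $\alpha = \lambda + \ell k$ for the typical and $\alpha = \lambda_{r,s} + \ell k$ (respectively $\lambda_{u-r,v-s} + \ell k$) for the atypical standard. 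The decompositions asserted in the proposition then serve as the \emph{definitions} of the $\pfvoa{k}$-modules $\pfrel{\mu}{r,s}$ and $\pfindrel{\mu}{r,s}^{\pm}$.

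Next I would match characters. Dividing the standard $\slvoa{k}$-character \eqref{eq:typchar} term-by-term by the Fock character \eqref{eq:fockchar} at weight $\mu + \ell k$, and using the identity $(\mu + \ell k)^2/(4k) = \mu^2/(4k) + \ell\mu/2 + \ell^2 k/4$, the $y$- and $z$-dependent prefactors as well as all $\ell$-dependent pieces cancel, leaving
\begin{equation*}
	\fch{\pfrel{\mu}{r,s}}{q} = \frac{q^{-\mu^2/4k}\, \vch{u,v}{r,s}{q}}{\eta(q)}.
\end{equation*}
The same comparison applied to \eqref{eq:atypstchar} produces the identical formula for $\fch{\pfindrel{\mu}{r,s}^{\pm}}{q}$, consistent with the standards $\sfmod{\ell}{\slindrel{r,s}^{\pm}}$ and their typical neighbours sharing characters even though they differ as modules.

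Finally, the structural claims drop out of the structure-preservation clause of \cref{res:modules}. Simplicity of $\sfmod{\ell}{\slrel{\lambda}{r,s}}$ (from \cref{rhwsimples}) transfers to simplicity of each $\pfrel{\mu}{r,s}$; the \hw{} property is then seen by observing that the relaxed \hwv{} generating $\sfmod{\ell}{\slrel{\lambda}{r,s}}$ decomposes across Fock sectors as a tensor product (Fock \hwv{}) $\otimes$ (parafermion vector), and since positive modes of $\pfvoa{k}$ commute with $\hvoa$ and annihilate the relaxed \hwv{}, they annihilate the parafermion factor. The non-split length-two extensions \eqref{es:DED} transfer similarly: structure-preservation hands each $\pfindrel{\mu}{r,s}^{\pm}$ a length-two socle series, and a hypothetical splitting of $\pfindrel{\mu}{r,s}^{\pm}$ would tensor back up (against the Fock factor) to split the ambient $\slvoa{k}$-extension, contradicting \eqref{es:DED}. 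The one genuinely technical hurdle is the semisimplicity of the $\hvoa$-action; with that in hand, everything reduces to character matching and an invocation of \cref{res:modules}.
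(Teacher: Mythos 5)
Your overall strategy --- apply \cref{res:modules}, define the coset components by the resulting decomposition, and pin down their characters by dividing out the Fock characters --- is the right starting point and coincides with the paper's argument for $\ell=0$. The gap is in how you handle $\ell\neq 0$. Applying \cref{res:modules} to $\sfmod{\ell}{\slrel{\lambda}{r,s}}$ only produces \emph{some} irreducible $\pfvoa{k}$-modules as the tensor partners of the $\fock{\mu+\ell k}$; the substantive content of \eqref{eq:typdecomp} is that these partners are isomorphic to the \emph{same} modules $\pfrel{\mu}{r,s}$ that arose at $\ell=0$. You cannot take the displayed decomposition ``as the definition'' simultaneously for all $\ell$, and your only tool for identifying the $\ell$-dependent components with the $\ell=0$ ones is character matching at the level of $\pfvoa{k}$. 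That is insufficient: as the paper notes immediately after the proposition (\cref{rem:noiso}), standard $\pfvoa{k}$-modules are \emph{not} determined by their characters --- $\pfrel{\mu}{r,s}$ and $\pfrel{-\mu}{r,s}$ have equal characters but are non-isomorphic for $2\mu\notin\rlat'$ --- so equality of the quotient characters does not identify the modules.

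The paper closes this gap by moving the comparison up to $\slvoa{k}$, where irreducibles \emph{are} determined by their characters. One first uses the lifting condition (\cref{res:lifting}) to see that $\fock{\mu+\ell k}\otimes\pfrel{\mu}{r,s}$ induces to an untwisted $\slvoa{k}$-module (since $\fock{\mu}\otimes\pfrel{\mu}{r,s}$ induces to $\slrel{\lambda}{r,s}$ and $\ell k\in\rlat'$); this induced module is irreducible because $\pfrel{\mu}{r,s}$ is, and a character computation shows it has the character of $\sfmod{\ell}{\slrel{\lambda}{r,s}}$, hence is isomorphic to it. Restricting back then yields \eqref{eq:typdecomp} for all $\ell$. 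For the atypical standards one argues similarly, the identification now resting on the fact that $\slindrel{r,s}^{\pm}$ and $\pfindrel{\mu}{r,s}^{\pm}$ are characterised by their Loewy diagrams (the relevant extension groups being one-dimensional) rather than by their characters. Your treatment of the remaining points --- semisimplicity of the $\hvoa$-action, the highest-weight property of the components, and the transfer of the non-split extensions \eqref{es:DED} via the structure-preservation clause --- is fine.
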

\begin{proof}
	Schur-Weyl duality  (\cref{res:modules}) immediately implies that
	\begin{equation}
		\Res{\slrel{\lambda}{r,s}} \cong \bigoplus_{\mu \in \lambda + \rlat} \fock{\mu} \otimes \pfrel{\mu}{r,s},
	\end{equation}
	where the $\pfrel{\mu}{r,s}$ are irreducible $\pfvoa{k}$-modules.  We obtain the parafermion characters by decomposing \eqref{eq:typchar} (with $\ell = 0$):
	\begin{align} \label{eq:TypDecomp}
		\ch{\slrel{\lambda}{r,s}}
		&= \frac{y^k \vch{u,v}{r,s}{q}}{\eta(q)^2} \sum_{n \in \ZZ} z^{2n + \lambda}
		= \sum_{n \in \ZZ} \frac{y^k z^{2n + \lambda} q^{(2n + \lambda)^2 / 4k}}{\eta(q)} \cdot \frac{q^{-(2n + \lambda)^2 / 4k} \vch{u,v}{r,s}{q}}{\eta(q)} \notag \\
		&= \sum_{n \in \ZZ} \ch{\fock{2n + \lambda}} \frac{q^{-(2n + \lambda)^2 / 4k} \vch{u,v}{r,s}{q}}{\eta(q)}.
	\end{align}
	The desired result now follows by identifying $2n + \lambda$ with $\mu \in \lambda + 2 \ZZ = \lambda + \rlat$.

	Analogous decompositions hold for the $\sfmod{\ell}{\slrel{\lambda}{r,s}}$, so it remains to show that the parafermion modules appearing in these decompositions may be identified with the $\pfrel{\mu}{r,s}$.  This follows from the lifting condition (\cref{res:lifting}).  Indeed, this condition guarantees that there exists $\beta \in \rlat \otimes_{\ZZ} \CC = \CC$ such that $\fock{\nu} \otimes \pfrel{\mu}{r,s}$ lifts to an $\slvoa{k}$-module if and only if $\nu \in \beta + \rlat'$, where $\rlat' = k \ZZ$ is the dual lattice of $\rlat = 2 \ZZ$ (with respect to the natural bilinear form $\inner{h}{h} = 2k$ induced by the \ope{} of $h(z)$ and $h(w)$).  As $\Ind{\brac[\big]{\fock{\mu} \otimes \pfrel{\mu}{r,s}}} \cong \slrel{\lambda}{r,s}$ is an $\slvoa{k}$-module, we may take $\beta = \mu$.

	It follows that for any $\ell \in \ZZ$, the lift $\Ind{\brac[\big]{\fock{\mu + \ell k} \otimes \pfrel{\mu}{r,s}}} = \bigoplus_{\mu \in \lambda} \fock{\mu + \ell k} \otimes \pfrel{\mu}{r,s}$ is an $\slvoa{k}$-module and it is irreducible because $\pfrel{\mu}{r,s}$ is (by Schur-Weyl duality).  A calculation very similar to \eqref{eq:TypDecomp} shows that this irreducible module has the same character as $\sfmod{\ell}{\slrel{\lambda}{r,s}}$, hence they are isomorphic.  This therefore establishes the decomposition \eqref{eq:typdecomp} for all $\ell \in \ZZ$.  The results for the atypical standard modules $\sfmod{\ell}{\slindrel{r,s}^{\pm}}$ follow using similar arguments, the main difference being that the isomorphisms follow from the indecomposables being completely characterised by their Loewy diagrams (the extension groups are $1$-dimensional).
\end{proof}

\begin{remark}\label{rem:noiso}
	We point out that this result used the fact that the irreducible $\slvoa{k}$-modules are determined (up to isomorphism) by their characters.  The same is unfortunately not true for characters derived above for the standard $\pfvoa{k}$-modules.  For example, $\pfrel{\mu}{r,s}$ and $\pfrel{-\mu}{r,s}$ share the same character, despite being non-isomorphic (for $\mu \neq 0$). We can see this inequivalence as follows. First, as $\fock{\mu} \otimes \pfrel{\mu}{r,s}$ lifts to the $\slvoa{k}$-module $\slrel{\mu}{r,s}$, the lifting condition (\cref{res:lifting}) says that the tensor product $\fock{\nu} \otimes \pfrel{\mu}{r,s}$ lifts to an $\slvoa{k}$-module if and only if $\nu \in \mu + \rlat'$.  If $\pfrel{\mu}{r,s}$ and $\pfrel{-\mu}{r,s}$ were isomorphic, then there would have to exist $\nu \in (\mu + \rlat') \cap (-\mu + \rlat')$ which is empty unless $2 \mu \in \rlat'$.  It follows that $\pfrel{\mu}{r,s}$ and $\pfrel{-\mu}{r,s}$ are not isomorphic if $2\mu \notin \rlat'$.  But, if $2\mu \in \rlat' = k \ZZ$, then $\mu = -\mu + \ell k$, for some $\ell \in \ZZ$, and any isomorphism $\pfrel{\mu}{r,s} \cong \pfrel{-\mu}{r,s}$ would lead to
	\begin{equation}
		\slrel{\mu}{r,s} \cong \Ind{(\fock{\mu} \otimes \pfrel{\mu}{r,s})} \cong \Ind{(\fock{\mu} \otimes \pfrel{-\mu}{r,s})} \cong \Ind{(\fock{-\mu + \ell k} \otimes \pfrel{-\mu}{r,s})} \cong \sfmod{\ell}{\slrel{-\mu}{r,s}},
	\end{equation}
	which is a contradiction unless $\ell = 0$, hence $\mu = 0$.
\end{remark}

Combining this result with the character formulae given for the atypical irreducibles in \cref{plainslslchar} and/or the resolutions of \cref{rem:slres}, we deduce the latter's decompositions into $(\hvoa \otimes \pfvoa{k})$-modules and character formulae for the resulting parafermion modules.

\begin{prop} \label{charLcoset}
	Given \cref{ass:cat}, the atypical irreducible $\slvoa{k}$-modules decompose into $(\hvoa \otimes \pfvoa{k})$-modules as
	\begin{equation} \label{eq:atypdecomp}
		\Res{\sfmod{\ell}{\slirr{r}}} \cong \bigoplus_{\mu \in \lambda_{r,0} + \rlat} \fock{\mu + \ell k} \otimes \pfirr{\mu}{r}, \qquad
		\Res{\sfmod{\ell}{\sldis{r,s}^+}} \cong \bigoplus_{\mu \in \lambda_{r,s} + \rlat} \fock{\mu + \ell k} \otimes \pfdis{\mu}{r,s},
	\end{equation}
	where the $\pfirr{\mu}{r}$ and $\pfdis{\mu}{r,s}$ are irreducible \hw{} $\pfvoa{k}$-modules characterised by the following resolutions:
	\begin{subequations} \label{eq:pfres}
		\begin{align} \label{eq:pfresL}
			\cdots &\lra \pfindrel{\mu-(3v-1)k}{r,v-1}^+ \lra \cdots \lra \pfindrel{\mu-(2v+2)k}{r,2}^+ \lra \pfindrel{\mu-(2v+1)k}{r,1}^+ \notag \\
			&\qquad\qquad \lra \pfindrel{\mu-(2v-1)k}{u-r,v-1}^+ \lra \cdots \lra \pfindrel{\mu-(v+2)k}{u-r,2}^+ \lra \pfindrel{\mu-(v+1)k}{u-r,1}^+ \notag \\
			&\qquad\qquad\qquad\qquad \lra \pfindrel{\mu-(v-1)k}{r,v-1}^+ \lra \cdots \lra \pfindrel{\mu-2k}{r,2}^+ \lra \pfindrel{\mu-k}{r,1}^+ \lra \pfirr{\mu}{r} \lra 0,
		\end{align}
		\begin{equation} \label{eq:pfresD}
			0 \lra \pfirr{\mu-(v-s)k}{u-r} \lra \pfindrel{\mu-(v-1-s)k}{r,v-1}^+ \lra \cdots \lra \pfindrel{\mu-2k}{r,s+2}^+ \lra \pfindrel{\mu-k}{r,s+1}^+ \lra \pfdis{\mu}{r,s} \lra 0.
		\end{equation}
	\end{subequations}
	Their characters are given by
	\begin{subequations} \label{eq:pfatypchar}
		\begin{align}
			\fch{\pfirr{\mu}{r}}{q} &= \sum_{s=1}^{v-1} (-1)^{s-1} \frac{\vch{u,v}{r,s}{q}}{\eta(q)} \sum_{m=0}^{\infty} \brac*{q^{-(\mu-sk+2wm)^2 / 4k} - q^{-(\mu+sk+2w(m+1))^2 / 4k}}, \label{eq:pfirrchar} \\
			\fch{\pfdis{\mu}{r,s}}{q} &= \sum_{s'=s+1}^{v-1} (-1)^{s'-s-1} \frac{\vch{u,v}{r,s'}{q}}{\eta(q)} q^{-(\mu-(s'-s)k)^2 / 4k} + (-1)^{v-1-s} \ch{\pfirr{\mu-(v-s)k}{u-r}}. \label{eq:pfdischar}
		\end{align}
	\end{subequations}
\end{prop}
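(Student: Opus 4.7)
\medskip

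The plan is to proceed in parallel with the proof of \cref{cosetEtype}, but starting from the atypical irreducibles in place of the standards. First I would apply the Schur-Weyl duality of \cref{res:modules} to $\sfmod{\ell}{\slirr{r}}$ and to $\sfmod{\ell}{\sldis{r,s}^+}$. This yields decompositions $\Res{\sfmod{\ell}{\slirr{r}}} \cong \bigoplus_{\nu \in \alpha + \rlat} \fock{\nu} \otimes \pfirr{\nu}{r}$, and similarly for the discrete case, with the parafermion factors irreducible by the structure-preserving part of \cref{res:modules}. The coset $\alpha + \rlat$ of admissible Heisenberg weights is determined by reading off the $\SLA{sl}{2}$-weight of a single ground state: for $\slirr{r}$ this is $\lambda_{r,0}$, for $\sldis{r,s}^+$ it is $\lambda_{r,s}$, and twisting by $\sfaut^\ell$ shifts all Heisenberg weights by $\ell k$ (as visible in the character transformation \eqref{eq:TwChar} specialised to the Fock characters \eqref{eq:fockchar}). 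This produces the decompositions \eqref{eq:atypdecomp}, with the parafermion modules $\pfirr{\mu}{r}$ and $\pfdis{\mu}{r,s}$ characterised intrinsically as the $\fock{\mu}$-isotypic components.

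Next, to obtain the resolutions \eqref{eq:pfres}, I would apply the restriction functor term-by-term to the $\slvoa{k}$-resolutions \eqref{eq:slres}. Since restriction to $\hvoa \otimes \pfvoa{k}$ is exact, the $\fock{\mu}$-isotypic component is a direct summand and hence the resolution remains exact after projecting onto it. \cref{cosetEtype} identifies the $\fock{\mu}$-isotypic component of $\sfmod{\ell}{\slindrel{r,s}^+}$ as $\pfindrel{\mu - \ell k}{r,s}^+$, and substituting the indexing $\ell = 2mv + s'$ (respectively $\ell = 2(m+1)v - s'$, $\ell = s'$) from \eqref{eq:slresL} and \eqref{eq:slresD} reproduces the indices in \eqref{eq:pfresL} and \eqref{eq:pfresD} exactly. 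One small verification to record is that for each $\ell$ appearing in \eqref{eq:slres}, the required coset condition $\mu - \ell k \in \lambda_{r',s'} + \rlat$ is consistent with the starting coset $\mu \in \lambda_{r,0} + \rlat$ (or $\mu \in \lambda_{r,s} + \rlat$); this reduces to the integrality of $vk = -2v + u$.

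For the characters, I would substitute the decomposition \eqref{eq:typdecomp} and its atypical analogue, together with the Fock character \eqref{eq:fockchar} and the standard parafermion character \eqref{eq:pfrelchar}, into the $\slvoa{k}$-character formulae \eqref{eq:atypLchar} and \eqref{eq:atypDchar}, then extract the coefficient of a fixed $\ch{\fock{\mu}}$ on both sides. Using $-(2mv + s')k = 2mw - s'k$ and $-(2(m+1)v - s')k = 2(m+1)w + s'k$ with $w = -kv$, along with the Kac-table symmetry $\vch{u,v}{u-r,v-s'}{q} = \vch{u,v}{r,s'}{q}$, the two alternating sums combine into the single sum in \eqref{eq:pfirrchar}. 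The discrete character \eqref{eq:pfdischar} is obtained analogously from \eqref{eq:atypDchar}, with the final $\ch{\pfirr{\mu - (v-s)k}{u-r}}$ term arising from the $\sfmod{v-s+\ell}{\slirr{u-r}}$ factor in \eqref{eq:atypDchar} after matching Heisenberg weights.

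The main obstacle I anticipate is purely bookkeeping: keeping track of how each spectral flow shift $\sfaut^\ell$ interacts with the Heisenberg coset label and the parafermion label, and confirming that the shifts in \eqref{eq:pfresL} and \eqref{eq:pfresD} (particularly at the transition from $\pfindrel{\mu - (v-1)k}{r,v-1}^+$ to $\pfindrel{\mu - (v+1)k}{u-r,1}^+$, where the Virasoro label flips) match the indexing of the resolutions \eqref{eq:slres}. Once this is carried out consistently, the character formulas follow from a direct substitution, and the convergence of the infinite sum in \eqref{eq:pfirrchar} is inherited from that of \eqref{eq:atypLchar}, which holds since $k < 0$.
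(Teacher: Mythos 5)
Your proposal is correct and takes essentially the same route as the paper, which derives \cref{charLcoset} by combining \cref{cosetEtype} with the resolutions of \cref{rem:slres} and the character formulae of \cref{plainslslchar}, extracting the $\fock{\mu}$-isotypic components term by term. Your additional bookkeeping (the exactness of projection onto isotypic summands, the parity consistency of the coset labels under spectral flow, and the use of the Kac symmetry $\vch{u,v}{u-r,v-s'}{q}=\vch{u,v}{r,s'}{q}$ to merge the two alternating sums) correctly fills in the details the paper leaves implicit.
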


\begin{remark} \label{rem:D=L}
	It is easy to check from these formulae that $\pfdis{\mu}{r,v-1}$ and $\pfirr{\mu - k}{u-r}$ have the same character, consistent with the fact that these $\pfvoa{k}$-modules are isomorphic (by \eqref{eq:pfresD} with $s=v-1$).
\end{remark}

\begin{remark}
	As an alternative to the resolutions \eqref{eq:pfres}, we present the following non-split short exact sequences that characterise the atypical standard $\pfvoa{k}$-modules:
	\begin{subequations}
		\begin{align} \label{es:pfDED}
			\dses{\pfdis{\mu}{r,s}}{\pfindrel{\mu}{r,s}^+}{\pfdis{\mu+k}{r,s-1}}& & &\text{(\(s \neq 1\)),} \\
			\dses{\pfdis{\mu+k}{u-r,v-1-s}}{\pfindrel{\mu}{r,s}^-}{\pfdis{\mu}{u-r,v-s}}& & &\text{(\(s \neq v-1\)).}
		\end{align}
	\end{subequations}
	When $s=1$, the rightmost module of the first sequence should be replaced by $\pfirr{\mu+k}{r}$.  Similarly, the leftmost module of the second sequence should be replaced by $\pfirr{\mu+k}{u-r}$ when $s=v-1$.
\end{remark}

\begin{remark} \label{rem:pfirraltchar}
	If we had started with the resolution obtained from \cref{eq:slresL} by conjugating, then we would have instead arrived at the following character formula:
	\begin{equation}
		\fch{\pfirr{\mu}{r}}{q} = \sum_{s=1}^{v-1} (-1)^{s-1} \frac{\vch{u,v}{r,s}{q}}{\eta(q)} \sum_{m=0}^{\infty} \brac*{q^{-(\mu+sk+2vkm)^2 / 4k} - q^{-(\mu-sk+2vk(m+1))^2 / 4k}}.
	\end{equation}
	Replacing $\mu$ by $-\mu$ and comparing with \eqref{eq:pfirrchar}, we conclude that $\pfirr{\mu}{r}$ and $\pfirr{-\mu}{r}$ have the same character, despite being non-isomorphic (for $\mu \neq 0$). The reason is exactly the same argument as in Remark \ref{rem:noiso}.
\end{remark}

The natural category of $\pfvoa{k}$-modules to consider is thus the full subcategory whose simple objects are the irreducibles $\pfirr{\mu}{r}$, $\pfdis{\mu}{r,s}$ and $\pfrel{\mu}{r,s}$ and whose objects are all realised as subquotients of the fusion product of a finite collection of simple objects.  We shall assume that no further irreducible $\pfvoa{k}$-modules are generated as subquotients of such fusion products, hence that this category does indeed exist.  It shall be denoted by $\pfcat{k}$.

\begin{remark}
	We do not claim that the $\pfirr{\mu}{r}$, $\pfdis{\mu}{r,s}$ and $\pfrel{\mu}{r,s}$ exhaust the irreducible $\pfvoa{k}$-modules because of the Whittaker-type $\slvoa{k}$-modules mentioned in \cref{rem:whittaker}.  Indeed, $\hvoa$ acts non-semisimply on these $\slvoa{k}$-modules, so we cannot use \cref{res:modules} to easily check if decomposing into $\hvoa \otimes \pfvoa{k}$-modules gives anything new.
\end{remark}

For future convenience, we collect the conformal weights of the ground states of these $\pfvoa{k}$-modules.  For the $\pfvoa{k}$-module denoted by $\pfmod{\mu}{\star}^{\bullet}$ (for appropriate $\bullet$ and $\star$), this weight will be denoted by $\pfdim{\mu}{\star}{\bullet}$.
\begin{proposition} \label{prop:pfconfwts}
	The conformal weights of the ground states of the $\pfvoa{k}$-modules introduced above are
	\begin{subequations}
		\begin{align}
			\pfreldim{\mu}{r,s} &= \pfdim{\mu}{r,s}{\pm} = \Delta_{r,s} - \frac{\mu^2}{4k}, \label{eq:pfrelconfwts} \\
			\pfirrdim{\mu}{r} &=
			\begin{dcases*}
				\Delta_{r,0} - \frac{\mu^2}{4k} & if \(\abs{\mu} \le \lambda_{r,0}\), \\
				\Delta_{r,0} - \frac{\mu^2}{4k} + \frac{\abs{\mu} - \lambda_{r,0}}{2} & if \(\abs{\mu} \ge \lambda_{r,0}\),
			\end{dcases*}
			\label{eq:pfirrconfwts} \\
			\pfdisdim{\mu}{r,s} &=
			\begin{dcases*}
				\Delta_{r,s} - \frac{\mu^2}{4k} & if \(\mu \le \lambda_{r,s}\), \\
				\Delta_{r,s} - \frac{\mu^2}{4k} + \frac{\mu - \lambda_{r,s}}{2} & if \(\mu \ge \lambda_{r,s}\),
			\end{dcases*}
			\qquad\qquad \text{(\(s \neq v-1\)).} \label{eq:pfdisconfwts}
		\end{align}
	\end{subequations}
	For $s=v-1$, we have instead $\pfdisdim{\mu}{r,v-1} = \pfirrdim{\mu-k}{u-r}$, by \cref{rem:D=L}.
\end{proposition}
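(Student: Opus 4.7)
The plan is to realise each ground-state conformal weight via the coset identity $\tilde{L}_0 = L_0 - \frac{1}{4k} h_0^2$.  By \cref{cosetEtype,charLcoset}, the irreducible parafermion module in question is the Schur--Weyl multiplicity space at Heisenberg weight $\mu$ inside a corresponding irreducible $\slvoa{k}$-module $\Mod{M}$.  Writing $\Mod{M}_\mu$ for the $h_0 = \mu$ weight subspace of $\Mod{M}$, we therefore have
\[
	\pfdim{\mu}{\star}{\bullet} = \min_{0 \neq v \in \Mod{M}_\mu} \brac[\big]{L_0\text{-eigenvalue of }v} - \frac{\mu^2}{4k},
\]
so the task reduces to computing $\min L_0|_{\Mod{M}_\mu}$ in each case.

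First I would dispatch the standard modules $\slrel{\lambda}{r,s}$ and $\slindrel{r,s}^{\pm}$: their ground states form a dense $\SLA{sl}{2}$-weight module that realises every $\mu \in \lambda + \rlat$ at conformal weight $\Delta_{r,s}$, so $\min L_0|_{\Mod{M}_\mu} = \Delta_{r,s}$ and \eqref{eq:pfrelconfwts} follows immediately.  For $\sldis{r,s}^+$ with $s \neq v-1$, the ground states form a \lw{} $\SLA{sl}{2}$-module generated by the \hwv{} $v$ of weight $\lambda_{r,s}$, so every $\mu \le \lambda_{r,s}$ congruent to $\lambda_{r,s}$ modulo $\rlat$ is achieved at conformal weight $\Delta_{r,s}$.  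For $\mu > \lambda_{r,s}$, a \pbw{} analysis shows that any state of weight $\mu$ is a sum of monomials $\prod e_{-a_i} \prod h_{-b_j} \prod f_{-c_k} u$, where $u$ is a ground state of weight $\lambda' \le \lambda_{r,s}$; if such a monomial has $p$ factors of $e$-type and $q$ of $f$-type, then $\lambda' + 2(p-q) = \mu$ and its $L_0$-eigenvalue exceeds $\Delta_{r,s}$ by at least the total number of factors.  This is minimised by taking $q = 0$, no $h$-factors, $\lambda' = \lambda_{r,s}$, and $p = (\mu - \lambda_{r,s})/2$; the bound is saturated by $e_{-1}^p v$, which is non-zero by irreducibility of $\sldis{r,s}^+$, yielding \eqref{eq:pfdisconfwts}.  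An entirely analogous argument for $\slirr{r}$, whose ground states span the $r$-dimensional irreducible $\SLA{sl}{2}$-module at $L_0 = \Delta_{r,0}$, uses $e_{-1}$ for $\mu > \lambda_{r,0}$ and $f_{-1}$ for $\mu < -\lambda_{r,0}$ and yields \eqref{eq:pfirrconfwts}.  The remaining case $s = v - 1$ of the atypical discrete formula is immediate from the isomorphism $\pfdis{\mu}{r,v-1} \cong \pfirr{\mu - k}{u-r}$ recorded in \cref{rem:D=L}.

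The hard part will be justifying that no cancellation among higher-degree \pbw{} monomials produces a state of weight $\mu$ with $L_0$-eigenvalue smaller than that of $e_{-1}^p v$.  I would control this by filtering $U(\AKMA{sl}{2}) v$ by total negative-mode count: at each fixed $L_0$, the associated graded is spanned by monomials of fixed total degree, and any relation lowering the minimal degree would descend to a relation among the $\SLA{sl}{2}$-ground states that is absent in the irreducibles $\sldis{r,s}^+$ and $\slirr{r}$.  A more expedient alternative that sidesteps the filtration argument altogether is to read the desired conformal weights directly off the leading $q$-powers of the character formulae \eqref{eq:pfatypchar}, making \cref{prop:pfconfwts} a corollary of \cref{charLcoset} rather than an independent structural computation.
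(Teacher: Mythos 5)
Your proposal is correct and follows essentially the same route as the paper: the paper's (very terse) proof makes exactly your reduction, observing that a state of minimal conformal weight in an $h_0$-eigenspace is a \hwv{} for both $\hvoa$ and $\pfvoa{k}$, so that $\pfdim{\mu}{\star}{\bullet}$ equals the minimal affine conformal weight at $h_0$-weight $\mu$ minus $\frac{\mu^2}{4k}$. The explicit \pbw{} bookkeeping for each module type (together with your cleaner fallback of reading the minima off the leading $q$-powers of the characters from \cref{cosetEtype,charLcoset}) is precisely the computation the paper leaves implicit as following ``easily''.
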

\begin{proof}
	These results follow easily from the observation that any state of minimal conformal weight in an $h_0$-eigenspace of an $\AKMA{sl}{2}$-module will be a \hwv{} for both the Heisenberg and parafermion \vosas{}.  The conformal weight of such a state, with respect to $\pfvoa{k}$, is then its $\AKMA{sl}{2}$ conformal weight minus its Heisenberg conformal weight.
\end{proof}

\subsection{Fusion}

Recall that we only know the (Grothendieck) fusion rules of $\slvoa{k}$ up to the validity of \cref{ass:fusion}.  To 
deduce the (Grothendieck) fusion rules of $\pfvoa{k}$ from those of $\slvoa{k}$ using \cref{res:fus,cosetEtype,charLcoset}, we also need \cref{ass:cat}, namely that the category $\pfcat{k}$ of $\pfvoa{k}$-modules forms a vertex tensor category.  We recall that this \lcnamecref{ass:cat} is in force throughout.

To illustrate the method, consider the $\slvoa{k}$ fusion rule \eqref{fr:slLL}.  Applying \cref{charLcoset,res:fus}, we deduce that
\begin{align} \label{fr:pfill}
	\Ind{\brac*{\fock{\mu} \otimes \pfirr{\mu}{r}}} \fus{\slvoa{k}} \Ind{\brac*{\fock{\mu'} \otimes \pfirr{\mu'}{r'}}}
	&\cong \bigoplus_{r''=1}^{u-1} \vfusco{r,1}{r',1}{r'',1}{u,v} \Ind{\brac*{\fock{\mu''} \otimes \pfirr{\mu''}{r''}}} \notag \\
	\implies \quad \Ind{\brac*{\fock{\mu+\mu'} \otimes \brac*{\pfirr{\mu}{r} \fus{\pfvoa{k}} \pfirr{\mu'}{r'}}}}
	&\cong \Ind{\brac*{\fock{\mu''} \otimes \bigoplus_{r''=1}^{u-1} \vfusco{r,1}{r',1}{r'',1}{u,v} \pfirr{\mu''}{r''}}},
\end{align}
whenever $\mu = r-1 \pmod{\rlat}$, $\mu' = r'-1 \pmod{\rlat}$ and $\mu'' = r''-1 \pmod{\rlat}$.  It follows that the $\hvoa \otimes \pfvoa{k}$-modules being induced on the left- and right-hand sides of \eqref{fr:pfill} appear as direct summands of the same $\slvoa{k}$-module upon restricting to $\hvoa \otimes \pfvoa{k}$.  As these direct summands are completely determined by their Heisenberg weights, we can read off the $\pfvoa{k}$ fusion rule by identifying $\mu''$ with $\mu + \mu'$.

\begin{proposition} \label{prop:pffusL}
	Given \cref{ass:cat,ass:fusion}, the fusion rules of the irreducible $\pfvoa{k}$-modules with the $\pfirr{\mu}{r}$ are
	\begin{subequations}
		\begin{align}
			\pfirr{\mu}{r} \fus{\pfvoa{k}} \pfirr{\mu'}{r'} &\cong \bigoplus_{r''=1}^{u-1} \vfusco{r,1}{r',1}{r'',1}{u,v} \pfirr{\mu+\mu'}{r''}, \label{fr:pfLL} \\
			\pfirr{\mu}{r} \fus{\pfvoa{k}} \pfdis{\mu'}{r',s'} &\cong \bigoplus_{r''=1}^{u-1} \vfusco{r,1}{r',1}{r'',1}{u,v} \pfdis{\mu+\mu'}{r'',s'}, \label{fr:pfLD} \\
			\pfirr{\mu}{r} \fus{\pfvoa{k}} \pfrel{\mu'}{r',s'} &\cong \bigoplus_{r''=1}^{u-1} \vfusco{r,1}{r',1}{r'',1}{u,v} \pfrel{\mu+\mu'}{r'',s'}. \label{fr:pfLE}
		\end{align}
	\end{subequations}
\end{proposition}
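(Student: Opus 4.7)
The plan is to establish each of the three fusion rules by the method already illustrated in the paragraph preceding the statement, namely by lifting from $\pfvoa{k}$-modules to $\slvoa{k}$-modules through tensoring with suitable Fock spaces, applying the known $\slvoa{k}$ fusion rules of \cref{prop:slfusL}, and then reading off the parafermion fusion rule from the Heisenberg weight component $\mu+\mu'$. Throughout, \cref{ass:cat} ensures that \cref{res:fus} applies so that the induction functor is a tensor functor, while \cref{ass:fusion} gives us the $\slvoa{k}$ fusion rules needed as input.

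For \eqref{fr:pfLL}, first choose $\mu \in \lambda_{r,0} + \rlat$ and $\mu' \in \lambda_{r',0} + \rlat$, and pick $\ell, \ell' \in \ZZ$ so that $\mu + \ell k$ and $\mu' + \ell' k$ are the Heisenberg weights of specific Fock summands in the decomposition \eqref{eq:atypdecomp} of $\sfmod{\ell}{\slirr{r}}$ and $\sfmod{\ell'}{\slirr{r'}}$. By \cref{charLcoset}, these modules satisfy
\begin{equation*}
\Ind{\brac[\big]{\fock{\mu + \ell k} \otimes \pfirr{\mu}{r}}} \cong \sfmod{\ell}{\slirr{r}}, \qquad
\Ind{\brac[\big]{\fock{\mu' + \ell' k} \otimes \pfirr{\mu'}{r'}}} \cong \sfmod{\ell'}{\slirr{r'}}.
\end{equation*}
Applying \cref{res:fus} to the fusion product and using the $\slvoa{k}$ fusion rule \eqref{fr:slLL} from \cref{prop:slfusL} gives
\begin{equation*}
\Ind{\brac[\big]{\fock{\mu+\mu' + (\ell+\ell')k} \otimes \brac[\big]{\pfirr{\mu}{r} \fus{\pfvoa{k}} \pfirr{\mu'}{r'}}}} \cong \bigoplus_{r''=1}^{u-1} \vfusco{r,1}{r',1}{r'',1}{u,v} \, \sfmod{\ell+\ell'}{\slirr{r''}}.
\end{equation*}
Decomposing the right-hand side via \eqref{eq:atypdecomp} and isolating the Heisenberg weight $\mu+\mu'+(\ell+\ell')k$ summand then yields \eqref{fr:pfLL}, provided we check that only summands with $r'' \equiv r+r'-1 \pmod{2}$ can contribute; but this is automatic from the Virasoro fusion selection rule for $\vfusco{r,1}{r',1}{r'',1}{u,v}$ together with the congruence $\mu + \mu' \in \lambda_{r+r'-1,0} + \rlat$. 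The step from extracting one Fock summand to identifying the full parafermion fusion product uses the multiplicity-freeness of the decomposition (\cref{res:multfree}) so that the identification of $\pfvoa{k}$-summands indexed by the same Heisenberg weight is unambiguous.

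The proofs of \eqref{fr:pfLD} and \eqref{fr:pfLE} follow the same template, using instead the appropriate $\slvoa{k}$ fusion rules from \cref{prop:slfusL} and the decompositions of $\sfmod{\ell'}{\sldis{r',s'}^+}$ and $\sfmod{\ell'}{\slrel{\lambda'}{r',s'}}$ from \cref{charLcoset} and \cref{cosetEtype} respectively. For \eqref{fr:pfLE}, one must choose representatives with $\mu' \not\equiv \lambda_{r',s'}, \lambda_{u-r',v-s'} \pmod{\rlat}$ so that the $\pfvoa{k}$-module $\pfrel{\mu'}{r',s'}$ on the right-hand side of the fusion product is genuinely of $\slindrel{}$-type (and similarly $\mu+\mu'$ avoids the atypical cosets), but generic choices suffice and the resulting identity extends to all admissible $\mu'$ by the Heisenberg weight matching.

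The main obstacle in carrying out this plan is bookkeeping rather than conceptual: one must verify carefully that the Heisenberg weight $\mu+\mu'$ labels a unique parafermion summand in the decomposition of each $\sfmod{\ell+\ell'}{\slirr{r''}}$ appearing on the right-hand side, so that the congruence conditions on $\mu, \mu', \mu''$ imposed by the lifting condition (\cref{res:lifting}) and the parity shift inherent to the Virasoro fusion rule $\vfusco{r,1}{r',1}{r'',1}{u,v}$ (which vanishes unless $r+r'+r''$ is odd) are mutually compatible. Once this compatibility is checked, the three identities drop out directly from comparing Heisenberg weight components on both sides.
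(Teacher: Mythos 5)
Your proposal is correct and follows essentially the same route as the paper, which establishes these fusion rules by the induction/restriction argument displayed in \eqref{fr:pfill}: lift to $\slvoa{k}$-modules by tensoring with Fock spaces, apply \cref{prop:slfusL} together with the tensor-functor property of induction (\cref{res:fus}), and read off the parafermion fusion rule by matching Heisenberg weights, using multiplicity-freeness to make the identification unambiguous. Your additional remarks on the congruence $r+r'+r''$ odd and the genericity of $\mu'$ in the typical case are consistent with (and slightly more explicit than) the paper's treatment.
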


\begin{remark}
	Note that the $\pfirr{\mu}{1}$ with $\mu \in \rlat$ and the $\pfirr{\mu}{u-1}$ with $\mu \in u + \rlat$ are all simple currents in the $\pfvoa{k}$-module category $\pfcat{k}$.  The vacuum $\pfvoa{k}$-module $\pfirr{0}{1}$ is the fusion unit, as expected.  Excluding the vacuum module, the simple currents of minimal conformal weight are either $\pfirr{\pm 2}{1}$ or one of $\pfirr{0}{u-1}$ and $\pfirr{\pm 1}{u-1}$, according as to whether $u$ is even or odd, respectively.  These minimal conformal weights are
	\begin{equation} \label{eq:pfsimcurconfwts}
		\pfirrdim{\pm 2}{1} = 1+\frac{v}{w}, \qquad
		\pfirrdim{0}{u-1} = \frac{(u-2)v}{4}, \qquad
		\pfirrdim{\pm 1}{u-1} = \frac{(u-2)v}{4} + \frac{v}{4w},
	\end{equation}
	by \cref{prop:pfconfwts} (recall that $w = 2v-u$).  The order of $\pfirr{0}{u-1}$ is $2$, assuming that $u>2$, whilst the other (non-vacuum) simple currents all have infinite orders.
\end{remark}

\begin{proposition} \label{prop:pfgrfus}
	Given \cref{ass:cat,ass:fusion}, the Grothendieck fusion rules involving the atypicals $\pfdis{\mu}{r,s}$ and the typicals $\pfrel{\mu}{r,s}$ include
	\begin{subequations}
		\begin{align}
			\Gr{\pfrel{\mu}{r,s}} \Grfus{\pfvoa{k}} \Gr{\pfrel{\mu' }{r',s'}}
			&= \sum_{(r'',s'')} \vfusco{r,s}{r',s'}{r'',s''}{u,v} \brac*{\Gr{\pfrel{\mu + \mu'-k}{r'',s''}} + \Gr{\pfrel{\mu + \mu' + k}{r'',s''}}} \notag \\
			&\mspace{-30mu} + \sum_{(r'',s'')} \brac*{\vfusco{r,s}{r',s'-1}{r'',s''}{u,v} + \vfusco{r,s}{r',s'+1}{r'',s''}{u,v}} \Gr{\pfrel{\mu + \mu'}{r'',s''}}, \\
			\Gr{\pfrel{\mu}{r,s}} \Grfus{\pfvoa{k}} \Gr{\pfdis{\mu'}{r',s'}} &= \sum_{(r'',s'')} \vfusco{r,s}{r',s'+1}{r'',s''}{u,v} \Gr{\pfrel{\mu + \mu'}{r'',s''}} \notag \\
			&\mspace{100mu} + \sum_{(r'',s'')} \vfusco{r,s}{r',s'}{r'',s''}{u,v} \Gr{\pfrel{\mu+\mu'-k}{r'',s''}}.
			\intertext{If $s+s'<v$, then we have in addition}
			\Gr{\pfdis{\mu}{r,s}} \Grfus{\pfvoa{k}} \Gr{\pfdis{\mu'}{r',s'}} &= \sum_{(r'',s'')} \vfusco{r,s}{r',s'}{r'',s''}{u,v} \Gr{\pfrel{\mu+\mu'-k}{r'',s''}} \notag \\
			&\mspace{100mu}+ \sum_{r''=1}^{u-1} \vfusco{r,1}{r',1}{r'',1}{u, v} \Gr{\pfdis{\mu+\mu'}{r'',s+s'}},
			\intertext{while if $s+s' \ge v$, then we have instead}
			\Gr{\pfdis{\mu}{r,s}} \Grfus{\pfvoa{k}} \Gr{\pfdis{\mu'}{r',s'}} &= \sum_{(r'',s'')} \vfusco{r,s+1}{r',s'+1}{r'',s''}{u,v} \Gr{\pfrel{\mu+\mu'-k}{r'',s''}} \notag \\
			&\mspace{100mu} + \sum_{r''=1}^{u-1} \vfusco{r,1}{r',1}{r'',1}{u,v} \Gr{\pfdis{\mu+\mu'-k}{u-r'',s+s'-v+1}}.
		\end{align}
	\end{subequations}
\end{proposition}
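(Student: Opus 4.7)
The plan mirrors the strategy illustrated in the paragraph preceding \cref{prop:pffusL}: lift each claimed parafermion Grothendieck fusion to an $\slvoa{k}$ Grothendieck fusion via the induction functor, compute the latter using \cref{prop:slgrfus}, and then recover the parafermion identity by isolating a single Heisenberg-weight summand.

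The first step is to record the realisations
\begin{equation*}
\sfmod{\ell}{\slrel{\mu}{r,s}} = \Ind{\brac[\big]{\fock{\mu+\ell k}\otimes \pfrel{\mu}{r,s}}}, \qquad \sfmod{\ell}{\sldis{r,s}^+} = \Ind{\brac[\big]{\fock{\mu+\ell k}\otimes \pfdis{\mu}{r,s}}},
\end{equation*}
from \cref{cosetEtype,charLcoset}, valid for any $\mu$ in the appropriate coset of $\rlat$. Combining these with the tensor functor property of induction (\cref{res:fus}) yields
\begin{equation*}
\Ind{(\fock{\mu}\otimes\Mod{D})} \fus{\slvoa{k}} \Ind{(\fock{\mu'}\otimes\Mod{E})} \cong \Ind{\brac[\big]{\fock{\mu+\mu'}\otimes(\Mod{D}\fus{\pfvoa{k}}\Mod{E})}},
\end{equation*}
so passing to Grothendieck classes and applying \cref{prop:slgrfus} on the left presents the target parafermion Grothendieck fusion (after induction, and after choosing a Heisenberg weight) as a finite sum of induced standard $\slvoa{k}$-modules. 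Each such summand is then expanded back into $\hvoa\otimes\pfvoa{k}$-modules using \cref{cosetEtype,charLcoset} once more, and the component at Heisenberg weight $\mu+\mu'$ is singled out. By \cref{res:modules,res:multfree} this component is uniquely determined by its Heisenberg weight, so we recover a single parafermion Grothendieck class for each term. For instance, in the typical--typical case, $\sfmod{\ell+\ell'+1}{\slrel{\mu+\mu'-k}{r'',s''}}$ contributes $\pfrel{\mu+\mu'-k}{r'',s''}$; the $\pm k$ shifts arise from the $\pm 1$ shifts in the spectral flow index of \cref{prop:slgrfus}, and the various $s$-shifts from the identity $\lambda_{r,s+1}\equiv \lambda_{r,s}-k \pmod{\rlat}$, which follows from $k=t-2$ and $\lambda_{r,s}=r-1-ts$.

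The main point to verify is that, for every summand appearing in \cref{prop:slgrfus}, the target weight $\mu+\mu'$ genuinely lies in the Heisenberg spectrum of that summand. A short arithmetic with $\lambda_{r,s}=r-1-ts$ reduces this, modulo $\rlat = 2\ZZ$, to the parity condition that $r+r'-r''$ be odd --- which is precisely the Virasoro selection rule for $\vfusco{r,s}{r',s'}{r'',s''}{u,v}\neq 0$ in \eqref{eq:virminmodfus}. Hence every term with a nonzero Virasoro fusion coefficient contributes as claimed, and no extraneous terms appear. The case split $s+s'<v$ versus $s+s'\geq v$ in the atypical--atypical formula is then inherited directly from the corresponding split in \cref{prop:slgrfus}; in the second case, the identity $tv=u$ is what translates the summand $\sldis{u-r'',s+s'-v+1}^+$ into the parafermion contribution $\pfdis{\mu+\mu'-k}{u-r'',s+s'-v+1}$ at Heisenberg weight $\mu+\mu'$.
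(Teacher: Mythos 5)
Your proposal is correct and follows essentially the same route as the paper, which only illustrates the method (induce to $\slvoa{k}$, apply the known Grothendieck fusion rules of \cref{prop:slgrfus}, restrict back to $\hvoa\otimes\pfvoa{k}$-modules and read off the summand at Heisenberg weight $\mu+\mu'$) in the paragraph preceding \cref{prop:pffusL} and states \cref{prop:pfgrfus} without further argument. Your additional checks --- the congruence $\lambda_{r,s+1}\equiv\lambda_{r,s}-k\pmod{\rlat}$ and the verification that the weight $\mu+\mu'$ occurs in each summand precisely when the Virasoro selection rule is satisfied --- are accurate and make the bookkeeping explicit.
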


\begin{remark}
One can also determine the modular transformations of the characters of the irreducible $\pfvoa{k}$-modules and check that the standard Verlinde formula reproduces the Grothendieck fusion rules given here.  These modular properties may either be computed directly or, more easily, from the known modular properties of the irreducible $\slvoa{k}$- and $\hvoa$-modules.  We will not pursue these straightforward computations here.  Instead, we shall study the much more interesting modular properties of an infinite-order simple current extension of $\pfvoa{k}$, in \cref{sec:epf}.
\end{remark}

\subsection{Examples} \label{pfexamples}

The parafermion coset construction for the levels $k=-\frac{1}{2}$ and $k=-\frac{4}{3}$ has already been discussed in detail \cite{AdaCon05,RidSL210} with the result being that $\pfvoa{-1/2}$ and $\pfvoa{-4/3}$ may be identified as the well known singlet \voas{} $\singvoa{1,2}$ and $\singvoa{1,3}$ of central charges $\tilde{c} = -2$ and $\tilde{c} = -7$, respectively.  The decomposition of $\slvoa{k}$-modules into $\pfvoa{k}$-modules is given very explicitly in \cite[Sec.~4]{CreCos13} and singlet fusion rules have been computed (within the conjectural standard module formalism) in \cite{CreLog13,Crefal13,RidMod13}.  A rigorous computation of certain fusion coefficients for the $p=2$ singlet has also recently appeared \cite{Adafus16}.  All these computations are consistent with the (Grothendieck) fusion rules reported here.  We add that for these levels, much is known about the category $\pfcat{k}$ of $\pfvoa{k}$-modules.  In particular, it is a vertex tensor category provided that a $C_1$-cofiniteness condition and a finite Jordan-H\"older length condition hold, see \cite[Thm.~17]{CreQG16}.

An important family of parafermion cosets $\pfvoa{k}$ are those with $k=-\frac{n-1}{n}$, for $n\in \ZZ_{\ge 2}$.  Since $u=n+1$ and $v=n$ in these cases, the Virasoro characters appearing in the parafermion characters and the Virasoro fusion coefficients appearing in the parafermion fusion rules are those of the unitary Virasoro minimal models.  One reason for this importance is their relation to the \svoas{} $\savoa{k'}{\SLSA{sl}{2}{1}}$ with $k'$ a positive integer \cite{FeiW2n04}.  We intend to report on this in the near future. A second reason is that it is also expected that $C_2$-cofinite extensions of orbifolds of this family of parafermion cosets $\pfvoa{k}$ coincides with certain cosets of the minimal $\VOA{W}$-algebras of $\SLA{so}{2(n+1)}$ at level zero \cite{AraMin17}.

Here, we list the inequivalent irreducible $\pfvoa{-(n-1)/n}$-modules explicitly, recalling \cref{rem:D=L}:
\begin{itemize}
\item the $\pfirr{\mu}{r}$ with $r = 1, \dots, n$ and $\mu \in 2\ZZ+r-1$;
\item the $\pfdis{\mu}{r,s}$ with $r = 1, \dots, n$, $s = 1, \dots, n-2$ and $\mu + \frac{s}{n} \in 2\ZZ+r+s-1$;
\item the $\pfrel{\mu}{r,s}$ with either $\mathrlap{r = 1, \dots, \frac{n}{2}} \hphantom{r=1, \dots, n}$, $s = 1, \dots, n-1$ and $\mu \pm \frac{s}{n} \notin 2\ZZ+r+s-1$, if $n$ is even, or \\ \hphantom{the $\pfrel{\mu}{r,s}$ with either} $r = 1, \dots, n$, $\mathrlap{s = 1, \dots, \frac{n-1}{2}} \hphantom{s = 1, \ldots, n-1}$ and $\mu \pm \frac{s}{n} \notin 2\ZZ+r+s-1$, if $n$ is odd.
\end{itemize}
In terms of conciseness, it would be convenient to replace the $\pfirr{\mu}{r}$ by the $\pfdis{\mu}{r,n-1}$ in the above list.  However, we prefer to distinguish the $\slirr{}$-type modules explicitly as they include all the simple currents (including the vacuum module).

The first member, $n=2$, of this family of \voas{} is, as was mentioned above, the singlet $\pfvoa{-1/2} \cong \singvoa{1,2}$.  The above list recovers the known \cite{CreLog13} module spectrum.  Specifically, there are two series of $\slirr{}$-type modules $\pfirr{2m}{1}$ and $\pfirr{2m+1}{2}$, $m \in \ZZ$, which are all simple currents, no (inequivalent) $\sldis{}$-type modules and one series of $\slindrel{}$-type modules $\pfrel{\mu}{1,1}$, $\mu \in \ZZ + \frac{1}{2}$.  The minimal conformal weight is $\Delta_{1,1} = -\frac{1}{8}$.

A more interesting (and less familiar) example is $n=3$, thus $k=-\frac{2}{3}$ and $\tilde{c} = -\frac{5}{2}$.  This central charge matches that of the $N=1$ logarithmic superconformal minimal model $\nslogminmod{1,3}$ \cite{PeaLog14,CanFusI15,CanFusII15}.  Up to isomorphism, we now have six families of irreducible atypicals
\begin{subequations} \label{pf-2/3irreps}
	\begin{equation}
		\begin{gathered}
			\pfirr{2m}{1} \cong \pfdis{2m-2/3}{3,2} \quad \pfirr{2m+1}{2} \cong \pfdis{2m+1/3}{2,2} \quad \pfirr{2m}{3} \cong \pfdis{2m-2/3}{1,2} \\
			\pfdis{2m+2/3}{1,1} \qquad \pfdis{2m-1/3}{2,1} \qquad \pfdis{2m+2/3}{3,1}
		\end{gathered}
		\qquad \text{(\(m \in \ZZ\))}
	\end{equation}
	and three families of typicals
	\begin{equation}
		\pfrel{\mu}{1,1} \cong \pfrel{\mu}{3,2} \qquad \pfrel{\mu}{2,1} \cong \pfrel{\mu}{2,2} \qquad \pfrel{\mu}{3,1} \cong \pfrel{\mu}{1,2},
	\end{equation}
\end{subequations}
where $\mu \notin 2\ZZ \pm \frac{2}{3}$, $\mu \notin 2\ZZ \pm \frac{1}{3}$ and $\mu \notin 2\ZZ \pm \frac{2}{3}$, respectively.  The simple currents are the $\pfirr{2m}{1}$ and $\pfirr{2m}{3}$; their conformal weights are $\pfirrdim{2m}{1} = \frac{3}{2} \abs{m} (\abs{m}+1)$, $\pfirrdim{2m}{3} = \frac{3}{2} (\abs{m}+\frac{1}{3})^2 + \frac{1}{3}$, if $m \neq 0$, and $\pfirrdim{0}{3} = \frac{3}{2}$.

The conformal weight $\frac{3}{2}$ simple current $\Mod{G} = \pfirr{0}{3}$ has order $2$:  $\Mod{G} \fus{\pfvoa{-2/3}} \Mod{G} \cong \pfvoa{-2/3}$.  As the dimension of the space of ground states of $\Mod{G}$ is $1$, the corresponding simple current extension of $\pfvoa{-2/3}$ contains precisely one copy of the \svoa{} $\nslogminmod{1,3}$.  We denote this extension by $\spfvoa{-2/3}$ so that
\begin{equation}
	\Res{\spfvoa{-2/3}} \cong \pfvoa{-2/3} \oplus \Mod{G}.
\end{equation}

The character of this extended parafermionic \svoa{} is easy to determine using \eqref{eq:pfirrchar}:
\begin{align}
	\fch{\spfvoa{-2/3}}{q} &= \fch{\pfirr{0}{1}}{q} + \fch{\pfirr{0}{3}}{q} \notag \\
	&= \frac{\vch{4,3}{1,1}{q} + \vch{4,3}{1,2}{q}}{\eta(q)} \sum_{m=0}^{\infty} \sqbrac*{q^{3(2m+2/3)^2/8} - q^{3(2m+4/3)^2/8}} \notag \\
	&= \frac{1}{\eta(q)} \sqrt{\frac{\fjth{3}{1,q}}{\eta(q)}} \sum_{m=0}^{\infty} \sqbrac*{q^{(3m+1)^2/6} - q^{(3m+2)^2/6}} \\
	&= q^{-\tilde{c}/24} \brac*{1 + q^{3/2} + q^2 + 2 q^{5/2} + 2 q^3 + 3 q^{7/2} + 4 q^4 + 5 q^{9/2} + 6 q^5 + \cdots}. \notag
\end{align}
This shows that $\spfvoa{-2/3} \ncong \nslogminmod{1,3}$ because the coefficients of $q^{5/2}$ and $q^3$ in the latter's character are only $1$.  However, this extra state of conformal weight $\frac{5}{2}$ leads us to the decomposition
\begin{equation}
	\fch{\spfvoa{-2/3}}{q} = \sum_{m=0}^{\infty} \nsch{1,3}{2m+1,1}{q} = \fch{\ssingvoa{1,3}}{q},
\end{equation}
where $\nsch{1,3}{2m+1,1}{q}$ denotes the character of the irreducible $\nslogminmod{1,3}$-module whose \hwv{} has conformal weight $\nsdim{1,3}{2m+1,1} = \frac{1}{2} m (3m+2)$ and $\ssingvoa{1,3}$ denotes the $N=1$ singlet \svoa{} \cite{AdaN=109,AdaN=108} of central charge $-\frac{5}{2}$.

It is now straightforward to verify, with the aid of a computer, that the operator product algebras of the bosonic orbifold of the $N=1$ supersinglet $\ssingvoa{1,3}$ and the parafermion \voa{} $\pfvoa{-2/3}$ coincide. Since both \voas{} are simple, they must be isomorphic. We therefore identify the parafermion \voa{} $\pfvoa{-2/3}$ as the bosonic orbifold of the $N=1$ supersinglet $\ssingvoa{1,3}$ and its simple current extension $\spfvoa{-2/3}$ as $\ssingvoa{1,3}$.

We conclude this example by considering the induction of $\pfvoa{-2/3}$-modules $\Mod{M}$ to (twisted) $\spfvoa{-2/3}$-modules via
\begin{equation}
	\Ind{\Mod{M}} = \spfvoa{-2/3} \fus{\pfvoa{-2/3}} \Mod{M} \quad \implies \quad \Res{(\Ind{\Mod{M}})} \cong \Mod{M} \oplus \brac*{\Mod{G} \fus{\pfvoa{-2/3}} \Mod{M}}.
\end{equation}
Whether the resulting module is twisted or not (Ramond or \ns{}) depends only on the difference mod $\ZZ$ of the conformal weights of $\Mod{M}$ and $\Mod{G} \fus{\pfvoa{-2/3}} \Mod{M}$.  Noting that the fusion rules of \cref{prop:pffusL} specialise to
\begin{equation}
	\Mod{G} \fus{\pfvoa{-2/3}} \pfirr{\mu}{r} \cong  \pfirr{\mu}{4-r},  \qquad
	\Mod{G} \fus{\pfvoa{-2/3}} \pfdis{\mu}{r,1} \cong  \pfdis{\mu}{4-r,1}, \qquad
	\Mod{G} \fus{\pfvoa{-2/3}} \pfrel{\mu}{r,1} \cong  \pfrel{\mu}{4-r,1},
\end{equation}
it easy to check that this difference is $\frac{r}{2} \pmod{\ZZ}$.  The nine families of irreducible $\pfvoa{-2/3}$-modules listed in \eqref{pf-2/3irreps} therefore lift to six families of irreducible $\spfvoa{-2/3}$-modules in the \ns{} sector ($r$ odd), three of which are just parity-reversed copies of the other three, and three families of irreducible $\spfvoa{-2/3}$-modules in the Ramond sector ($r$ even), each of which is isomorphic to its parity reversal.

\begin{remark}
	In the early W-algebra literature, the \svoa{} $\ssingvoa{1,3}$ is referred to as the $N=1$ super-$\VOA{W}_3$ algebra because the additional fields of conformal weights $\frac{5}{2}$ and $3$ naturally form a superfield generalising the weight $3$ field of the Casimir W-algebra of $\SLA{sl}{3}$.  It was one of the first examples found of an ``exotic'' W-algebra, meaning that it only exists for a discrete set of central charges, in this case $-\frac{5}{2}$ and $\frac{10}{7}$ \cite{InaExt88}.  The latter central charge received much attention, see \cite{BouWSym93} and references therein, as it corresponds to a unitary value for both the $N=1$ superconformal minimal models and the Casimir W-algebras of $\SLA{sl}{3}$.  However, we are not aware of any detailed study of the non-unitary (and in fact logarithmic) $\tilde{c} = -\frac{5}{2}$ super-$\VOA{W}_3$ algebra in the literature.
\end{remark}

\section{The extended parafermion coset $\epfvoa{k}$} \label{sec:epf}

We now study a larger \voa{} $\epfvoa{k}$ as a coset of an extension of $\slvoa{k}$ or, as advocated in \eqref{pic:thegame} of the introduction, as a simple current extension of $\pfvoa{k}$.  As we shall show, the $\epfvoa{k}$ are not rational, because they admit reducible but indecomposable modules, but have a finite number of irreducibles, up to isomorphism.  Moreover, the characters of the irreducible $\epfvoa{k}$-modules will be shown to define a finite-dimensional vector-valued modular form.  We therefore conjecture that the $\epfvoa{k}$ are $C_2$-cofinite.  As in the previous section, the level $k$ will be assumed throughout to be admissible and negative.  Throughout, \cref{ass:cat,ass:fusion} are understood to be in force.

\subsection{$\epfvoa{k}$ as a coset and a simple current extension}

Recall that the vacuum module of $\slvoa{k}$ is $\slirr{1}$ and that its images $\sfmod{\ell}{\slirr{1}}$ under spectral flow are simple currents.  We consider the module	$\bigoplus_{\ell \in\ZZ} \sfmod{\alpha \ell}{\slirr{1}}$, where $\alpha \in \ZZ$.  It is easy to check that this simple current extension of $\slvoa{k}$ will be a \voa{} if and only if it is $\ZZ$-graded, which happens if and only if $k\alpha^2 \in 4\ZZ$.  This implies that $\alpha$ needs to be an integer multiple of $v$ and a convenient choice that works for all admissible levels is $\alpha=2v$.  We thus define the \voa{} $\eslvoa{k}$ as the simple current extension which decomposes into $\slvoa{k}$-modules as follows:
\begin{equation} \label{eq:DefE}
	\Res{\eslvoa{k}} \cong \bigoplus_{\ell \in\ZZ} \sfmod{2v \ell}{\slirr{1}}.
\end{equation}
We remark that because the conformal weights of $\sfmod{\ell}{\slirr{1}}$ are not bounded below whenever $\abs{\ell} \ge 2$, the \voa{} $\eslvoa{k}$ is not $\NN$-graded by its conformal weights.

Inserting the decomposition of \cref{charLcoset}, \cref{eq:DefE} becomes
\begin{equation} \label{eq:DecompE'}
	\Res{\eslvoa{k}} \cong \bigoplus_{\ell \in\ZZ} \bigoplus_{\mu \in \rlat} \fock{\mu +2vk \ell} \otimes \pfirr{\mu}{1}.
\end{equation}
Consider the lattice \voa{}
\begin{equation} \label{eq:DefL}
	\lvoa{L} = \bigoplus_{\lambda \in \lat{L}} \fock{\lambda},
\end{equation}
where $\lat{L} = -2vk \ZZ = w \rlat$ (recall that $w=-vk=2v-u$).  By considering the modules with $\mu = 0$, we see that \eqref{eq:DecompE'} may be rewritten as
\begin{equation} \label{eq:DecompE}
	\Res{\eslvoa{k}} \cong \bigoplus_{\mu \in \rlat} \latt{\mu}{L} \otimes \pfirr{\mu}{1}
	= \bigoplus_{\lambda \in \lat{L}' / \lat{L}} \latt{\lambda}{L} \otimes \sqbrac*{\bigoplus_{\mu \in \lat{L}} \pfirr{\lambda + \mu}{1}},
\end{equation}
where $\latt{\lambda}{L}$ denotes the $\lvoa{L}$-module $\bigoplus_{\ell \in \ZZ} \fock{\lambda + 2w \ell}$ with $\lambda \in \lat{L}' = \frac{1}{v} \ZZ$.  Note that $\lat{L}'$ is the dual lattice of $\lat{L}$ with respect to the normalisation $\inner{h}{h} = 2k$ induced from the \ope{} of $h(z)$ and $h(w)$.

The coset construction applied to $\lvoa{L} \subset \eslvoa{k}$ now defines the \voa{} $\epfvoa{k} =  \Com{\lvoa{L}}{\eslvoa{k}}$ whose decomposition into $\pfvoa{k}$-modules takes the form
\begin{equation} \label{eq:DecompB}
\Res{\epfvoa{k}} \cong \bigoplus_{\mu \in \lat{L}} \pfirr{\mu}{1}.
\end{equation}
For $\mu \in \lat{L}$, the $\pfirr{\mu}{1} = \pfirr{2w \ell}{1}$ with $\ell \in \ZZ$ are simple currents (\cref{prop:pffusL}), so we conclude that $\epfvoa{k}$ is also a simple current extension of $\pfvoa{k}$.
\begin{remark}
By \cref{prop:pfconfwts}, the conformal dimension of the ground states of $\pfirr{\pm 2w}{1}$ and 
$\pfirr{\pm 4w}{1}$ are
\begin{equation}
	\pfirrdim{\pm 2w}{r}=(2v-u)(v+1) \qquad \text{and} \qquad \pfirrdim{\pm 4w}{r}=2(2v-u)(2v+1)\ge 2\pfirrdim{2w}{r}.
\end{equation}
It follows that the primary field of $\pfirr{\pm 4w}{1}$ appears in the regular terms of 
the operator product algebra obtained by extending 
the strong generators of $\pfvoa{k}$ by 
the primary fields 
of $\pfirr{\pm 2w}{1}$.  Generalising this observation, we see that this extended set 
strongly generates $\epfvoa{k}$.
\end{remark}

This development completes the picture described in the introduction and summarised in the diagram \eqref{pic:thegame}.  The aim of the rest of this section is to identify $\epfvoa{k}$-modules, compute their characters and fusion rules, and then prove the modularity of the irreducible characters: their linear span extends to a finite-dimensional representation of the modular group.

\begin{remark} \label{NotTheTriplet}
	The choice $\alpha = 2v$, leading to $\lat{L} = 2vk \ZZ = 2w \ZZ$, is not always minimal.  For instance, when $k=-\frac{4}{3}$, the constraint $k \alpha^2 \in 4 \ZZ$ is satisfied by $\alpha = v = 3$, which would lead to $\lat{L} = w \ZZ = 4 \ZZ$ instead of $8 \ZZ$.  The upshot is that the extension $\epfvoa{-4/3}$ studied here has an order two simple current of integer conformal weight.  Indeed, \cref{prop:pfconfwts} gives the conformal weight of this simple current as $\pfirrdim{4}{1} = 5$ --- the resulting simple current extension of $\epfvoa{-4/3}$ is, of course, the $c=-7$ triplet \voa{} $\tripvoa{1,3}$.
\end{remark}

Given the decomposition \eqref{eq:DecompB}, it is now straightforward to lift a $\pfvoa{k}$-module $\Mod{M}$ to a (possibly twisted) $\epfvoa{k}$-module $\Ind{\Mod{M}}$ using the fusion rules of \cref{prop:pffusL}.  Indeed,
\begin{equation}
	\Ind{\Mod{M}} = \epfvoa{k} \fus{\pfvoa{k}} \Mod{M} \qquad \implies \qquad
	\Res{(\Ind{\Mod{M}})} \cong \bigoplus_{\lambda \in \lat{L}} \pfirr{\lambda}{1} \fus{\pfvoa{k}} \Mod{M}.
\end{equation}
If $\End \Mod{M} \cong \CC$, then this lift will be an (untwisted) $\epfvoa{k}$-module if and only if it is $\ZZ$-graded (\cref{res:sclift}).  This condition is obviously satisfied for all irreducibles as well as the atypical standards.

Let us illustrate the procedure by using the fusion rule \eqref{fr:pfLE} to analyse the lift of a typical $\pfvoa{k}$-module $\pfrel{\mu}{r,s}$ with $r=1, \dots, u-1$, $s=1, \dots, v-1$ and $\mu \neq \lambda_{r,s}, \lambda_{u-r,v-s} \pmod{\rlat}$:
\begin{equation} \label{eq:defBRel}
	\epfrel{\mu}{r,s} = \epfvoa{k} \fus{\pfvoa{k}} \pfrel{\mu}{r,s}, \qquad
	\Res{\epfrel{\mu}{r,s}} \cong \bigoplus_{\lambda \in \lat{L}} \pfirr{\lambda}{1} \fus{\pfvoa{k}} \pfrel{\mu}{r,s} \cong \bigoplus_{\lambda \in \mu + \lat{L}} \pfrel{\lambda}{r,s}.
\end{equation}
\cref{prop:pfconfwts} makes it easy to check that this lift will be untwisted if and only if $\mu \in \lat{L}' = \frac{1}{v} \ZZ$, assuming that $\mu \neq \lambda_{r,s}, \lambda_{u-r,v-s} \pmod{\rlat}$.  It is also simple, by \cref{res:sclift}.  We note that $\epfrel{\lambda}{r,s}$ coincides with $\epfrel{\lambda}{u-r,v-s}$ and $\epfrel{\mu}{r,s}$ when $\lambda = \mu \pmod{\lat{L}}$.

Applying this same procedure to the atypical standard and irreducible $\pfvoa{k}$-modules gives the decompositions of the resulting $\epfvoa{k}$-modules.
\begin{proposition} \label{prop:epfdecomp}
	Given \cref{ass:cat,ass:fusion}, the typical $\pfvoa{k}$-modules $\pfrel{\mu}{r,s}$ lift to irreducible \hw{} $\epfvoa{k}$-modules, denoted by $\epfrel{\mu}{r,s}$, only if $\mu \in \lat{L}'$.  The atypical irreducible $\pfvoa{k}$-modules $\pfirr{\mu}{r}$ and $\pfdis{\mu}{r,s}$ always lift to irreducible \hw{} $\epfvoa{k}$-modules, denoted by $\epfirr{\mu}{r}$ and $\epfdis{\mu}{r,s}$, respectively.  The atypical standard $\pfvoa{k}$-modules $\pfindrel{\mu}{r,s}^{\pm}$ likewise always lift to length $2$ indecomposable $\epfvoa{k}$-modules, denoted by $\epfindrel{\mu}{r,s}^{\pm}$.  The corresponding decompositions as $\pfvoa{k}$-modules take the unified form
	\begin{equation}
		\Res{\epfmod{\mu}{\star}^{\bullet}} \cong \bigoplus_{\lambda \in \mu + \lat{L}} \pfmod{\lambda}{\star}^{\bullet},
	\end{equation}
	for appropriate $\bullet$ and $\star$, and we have $\epfmod{\lambda}{\star}^{\bullet} = \epfmod{\mu}{\star}^{\bullet}$ when $\lambda = \mu \pmod{\lat{L}}$.
\end{proposition}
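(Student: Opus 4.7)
The plan is to apply \cref{res:sclift} to each of the $\pfvoa{k}$-modules listed in the statement. That result says, for any $\pfvoa{k}$-module $\Mod{D}$ with $\End \Mod{D} \cong \CC$, that $\Ind{\Mod{D}} = \bigoplus_{\lambda \in \lat{L}} \pfirr{\lambda}{1} \fus{\pfvoa{k}} \Mod{D}$ is an untwisted $\epfvoa{k}$-module if and only if it is $\ZZ$-graded, and simplicity is then preserved. The endomorphism hypothesis is immediate from Schur's lemma for the three irreducible families $\pfrel{\mu}{r,s}$, $\pfirr{\mu}{r}$ and $\pfdis{\mu}{r,s}$. For the length $2$ standards $\pfindrel{\mu}{r,s}^{\pm}$, the defining non-split sequences \eqref{es:pfDED} exhibit simple, mutually non-isomorphic socle and top, and a standard argument (any endomorphism acts as a scalar $\alpha$ on the socle, and $\phi - \alpha \Id$ factors through the simple top with image forced to lie in the socle, hence zero) again gives $\End \Mod{D} \cong \CC$.

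For the $\ZZ$-grading condition, one first reads off the candidate summands from \cref{prop:pffusL}: since $\vfusco{1,1}{r,1}{r'',1}{u,v} = \delta_{r'',r}$, fusion with a simple current $\pfirr{\lambda}{1}$ shifts the Heisenberg label $\mu \mapsto \lambda + \mu$ on each irreducible family, and the same shift extends to the standards by invertibility of the tensor functor $\pfirr{\lambda}{1} \fus{\pfvoa{k}} -$. This yields the unified decomposition $\Res{\epfmod{\mu}{\star}^{\bullet}} \cong \bigoplus_{\lambda \in \mu + \lat{L}} \pfmod{\lambda}{\star}^{\bullet}$. The $\ZZ$-grading condition then amounts to checking that $\pfmod{\lambda+\mu}{\star}^{\bullet}$ has conformal weight congruent to that of $\pfmod{\mu}{\star}^{\bullet}$ modulo $\ZZ$ for every $\lambda \in \lat{L}$. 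Writing $\lambda = 2wn = -2vkn$ with $n \in \ZZ$ and invoking \cref{prop:pfconfwts}, the quadratic part of this difference evaluates to $vwn^2 + vn\mu$, which lies in $\ZZ$ for every $n$ iff $v\mu \in \ZZ$, i.e.\ $\mu \in \lat{L}' = \tfrac{1}{v}\ZZ$; in the atypical $\pfirr{}$ and $\pfdis{}$ cases there is in addition a piecewise linear correction whose integrality follows from the residue-class constraints $\mu \in \lambda_{r,s} + \rlat$ and $\lambda \in 2w\ZZ \subset 2\ZZ$. This yields the only-if clause $\mu \in \lat{L}'$ in the typical case, while for the atypicals the label automatically lies in $\lambda_{r,s} + \rlat \subset \lat{L}'$ so no further restriction arises.

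Simplicity of the lifts of the irreducible families then follows from the second clause of \cref{res:sclift}. For the length $2$ indecomposability of $\epfindrel{\mu}{r,s}^{\pm}$, applying the exact induction functor to the defining non-split sequence \eqref{es:pfDED} produces a short exact sequence of $\epfvoa{k}$-modules with the already-constructed irreducibles $\epfdis{\mu}{r,s}$ (or $\epfirr{\mu+k}{r}$ when $s=1$) and $\epfdis{\mu+k}{r,s-1}$ as endpoints; non-splitness is preserved because restricting back to $\pfvoa{k}$ recovers the original non-split sequence as the $\lambda = \mu$ direct summand. The identification $\epfmod{\lambda}{\star}^{\bullet} = \epfmod{\mu}{\star}^{\bullet}$ for $\lambda \in \mu + \lat{L}$ is then immediate, since the indexing set $\mu + \lat{L}$ is invariant under shifts by $\lat{L}$. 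The main technical hurdle is the case analysis implicit in the piecewise formulas of \cref{prop:pfconfwts} for the atypical irreducibles, but the residue-class observations above make every branch collapse to the same integer arithmetic.
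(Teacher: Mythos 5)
Your proposal is correct and follows essentially the same route as the paper: the decomposition is read off from the simple-current fusion rules of \cref{prop:pffusL}, the untwisted/$\ZZ$-grading condition is checked via the conformal weights of \cref{prop:pfconfwts} (yielding the $\mu \in \lat{L}'$ constraint in the typical case and no constraint for the atypicals), and \cref{res:sclift} supplies simplicity of the lifted irreducibles. You merely fill in details the paper leaves implicit, such as the explicit computation $vwn^{2}+vn\mu$ of the weight difference, the verification that $\End \pfindrel{\mu}{r,s}^{\pm} \cong \CC$, and the preservation of non-splitness under induction.
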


\begin{remark} \label{epffinite}
	Note that the isomorphism classes of the typical $\epfvoa{k}$-modules are parametrised by $r$ and $s$, which take finitely many values, as well as (a subset of) the finite quotient $\lat{L}' / \lat{L}$.  Similarly, those of the atypical irreducibles are parametrised by $r$, $s$ and the finite quotient $\rlat / \lat{L}$.  We conclude that the $\epfvoa{k}$-module category $\epfcat{k}$ obtained from $\pfcat{k}$ by simple current extension has \emph{finitely} many simple objects, up to isomorphism.  It is not, however, clear if $\epfvoa{k}$ has finitely many irreducible modules, up to isomorphism.  Nevertheless, we are confident that this is so (see \cref{conj} below).
\end{remark}

\subsection{Fusion}

Recall that in this work we are assuming that the $\pfvoa{k}$-module category $\pfcat{k}$ can be given the structure of a vertex tensor category.  Combining \cref{res:fus} with \cref{prop:pffusL,prop:pfgrfus} therefore immediately leads to the following (Grothendieck) fusion rules.

\begin{proposition} \label{prop:epffusL}
	Given \cref{ass:cat,ass:fusion}, the fusion rules of the irreducible $\epfvoa{k}$-modules with the $\epfirr{\mu}{r}$ are
	\begin{subequations}
		\begin{align}
			\epfirr{\mu}{r} \fus{\epfvoa{k}} \epfirr{\mu'}{r'} &\cong \bigoplus_{r''=1}^{u-1} \vfusco{r,1}{r',1}{r'',1}{u,v} \epfirr{\mu+\mu'}{r''}, \label{fr:epfLL} \\
			\epfirr{\mu}{r} \fus{\epfvoa{k}} \epfdis{\mu'}{r',s'} &\cong \bigoplus_{r''=1}^{u-1} \vfusco{r,1}{r',1}{r'',1}{u,v} \epfdis{\mu+\mu'}{r'',s'}, \label{fr:epfLD} \\
			\epfirr{\mu}{r} \fus{\epfvoa{k}} \epfrel{\mu'}{r',s'} &\cong \bigoplus_{r''=1}^{u-1} \vfusco{r,1}{r',1}{r'',1}{u,v} \epfrel{\mu+\mu'}{r'',s'}. \label{fr:epfLE}
		\end{align}
	\end{subequations}
	In particular, the $\epfirr{\mu}{1}$ with $\mu \in \rlat / \lat{L}$ and the $\epfirr{\mu}{u-1}$ with $\mu \in u + \rlat / \lat{L}$ are all simple currents in the $\epfvoa{k}$-module category $\epfcat{k}$.
\end{proposition}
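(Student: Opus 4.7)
My plan is to derive \eqref{fr:epfLL}--\eqref{fr:epfLE} directly from the $\pfvoa{k}$-level fusion rules of \cref{prop:pffusL} by applying the induction functor $\Ind : \pfcat{k} \to \epfcat{k}$. By the second part of \cref{res:fus}, this induction is a tensor functor, so that $\Ind{\Mod{D}} \fus{\epfvoa{k}} \Ind{\Mod{E}} \cong \Ind{(\Mod{D} \fus{\pfvoa{k}} \Mod{E})}$ whenever both sides are legitimate $\epfvoa{k}$-modules. \cref{prop:epfdecomp} records that the atypical irreducibles $\pfirr{\mu}{r}$ and $\pfdis{\mu}{r,s}$ always lift to $\epfirr{\mu}{r}$ and $\epfdis{\mu}{r,s}$, while $\pfrel{\mu}{r,s}$ lifts to $\epfrel{\mu}{r,s}$ precisely when $\mu \in \lat{L}'$.

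To establish \eqref{fr:epfLL} I would apply $\Ind$ to both sides of \eqref{fr:pfLL}: the left-hand side becomes $\epfirr{\mu}{r} \fus{\epfvoa{k}} \epfirr{\mu'}{r'}$ by the tensor functor property, while the right-hand side becomes the claimed finite direct sum of the $\epfirr{\mu+\mu'}{r''}$. The derivations of \eqref{fr:epfLD} and \eqref{fr:epfLE} proceed identically starting from \eqref{fr:pfLD} and \eqref{fr:pfLE}. For \eqref{fr:epfLE} one only needs to confirm that when $\mu' \in \lat{L}'$, so that $\epfrel{\mu'}{r',s'}$ exists, the shifted parameter $\mu + \mu'$ also lies in $\lat{L}'$; this is automatic because $\mu \in \lambda_{r,0} + \rlat \subseteq \ZZ \subseteq \lat{L}' = \tfrac{1}{v}\ZZ$.

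For the simple current claim, I would specialise the rules just proved. Since $(1,1)$ is the identity of the $\virminmod{u,v}$ fusion ring, $\vfusco{1,1}{r',1}{r'',1}{u,v} = \delta_{r',r''}$, so \eqref{fr:epfLL}--\eqref{fr:epfLE} collapse to $\epfirr{\mu}{1} \fus{\epfvoa{k}} \Mod{N} \cong \Mod{N}'$ with $\Mod{N}'$ obtained from $\Mod{N}$ by shifting its weight parameter by $\mu$. In particular, $\epfirr{\mu}{1} \fus{\epfvoa{k}} \epfirr{-\mu}{1} \cong \epfirr{0}{1} = \epfvoa{k}$ exhibits $\epfirr{\mu}{1}$ as invertible. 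The case $r = u-1$ is handled analogously, using that $(u-1,1)$ is a simple current of $\virminmod{u,v}$ with $(u-1,1) \fus (r',1) \cong (u-r',1)$, which gives $\epfirr{\mu}{u-1} \fus{\epfvoa{k}} \epfirr{-\mu}{u-1} \cong \epfirr{0}{1}$.

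The only real difficulty is bookkeeping: one must keep careful track of the identifications $\epfmod{\lambda}{\star}^{\bullet} = \epfmod{\mu}{\star}^{\bullet}$ for $\lambda \equiv \mu \pmod{\lat{L}}$ so the output names match on both sides of each fusion rule. This is already packaged into \cref{prop:epfdecomp}, so the proof ultimately amounts to quoting \cref{res:fus}, \cref{prop:pffusL} and \cref{prop:epfdecomp}, and performing this routine bookkeeping.
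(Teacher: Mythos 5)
Your proposal is correct and follows essentially the same route as the paper, which simply observes that combining \cref{res:fus} with \cref{prop:pffusL} (given \cref{ass:cat,ass:fusion}) immediately yields these fusion rules. The extra details you supply --- the lifting checks via \cref{prop:epfdecomp}, the verification that $\mu+\mu' \in \lat{L}'$ in the typical case, and the specialisation to $r=1$ and $r=u-1$ for the simple current claim --- are exactly the routine bookkeeping the paper leaves implicit.
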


\begin{proposition} \label{prop:epfgrfus}
	Given \cref{ass:cat,ass:fusion}, the Grothendieck fusion rules involving the atypicals $\epfdis{\mu}{r,s}$ and the typicals $\epfrel{\mu}{r,s}$ include
	\begin{subequations}
		\begin{align}
			\Gr{\epfrel{\mu}{r,s}} \Grfus{\epfvoa{k}} \Gr{\epfrel{\mu' }{r',s'}}
			&= \sum_{(r'',s'')} \vfusco{r,s}{r',s'}{r'',s''}{u,v} \brac*{\Gr{\epfrel{\mu + \mu'-k}{r'',s''}} + \Gr{\epfrel{\mu + \mu' + k}{r'',s''}}} \notag \\
			&\mspace{-30mu} + \sum_{(r'',s'')} \brac*{\vfusco{r,s}{r',s'-1}{r'',s''}{u,v} + \vfusco{r,s}{r',s'+1}{r'',s''}{u,v}} \Gr{\epfrel{\mu + \mu'}{r'',s''}}, \\
			\Gr{\epfrel{\mu}{r,s}} \Grfus{\epfvoa{k}} \Gr{\epfdis{\mu'}{r',s'}} &= \sum_{(r'',s'')} \vfusco{r,s}{r',s'+1}{r'',s''}{u,v} \Gr{\epfrel{\mu + \mu'}{r'',s''}} \notag \\
			&\mspace{100mu} + \sum_{(r'',s'')} \vfusco{r,s}{r',s'}{r'',s''}{u,v} \Gr{\epfrel{\mu+\mu'-k}{r'',s''}}.
			\intertext{If $s+s'<v$, then we have in addition}
			\Gr{\epfdis{\mu}{r,s}} \Grfus{\epfvoa{k}} \Gr{\epfdis{\mu'}{r',s'}} &= \sum_{(r'',s'')} \vfusco{r,s}{r',s'}{r'',s''}{u,v} \Gr{\epfrel{\mu+\mu'-k}{r'',s''}} \notag \\
			&\mspace{100mu}+ \sum_{r''=1}^{u-1} \vfusco{r,1}{r',1}{r'',1}{u, v} \Gr{\epfdis{\mu+\mu'}{r'',s+s'}},
			\intertext{while if $s+s' \ge v$, then we have instead}
			\Gr{\epfdis{\mu}{r,s}} \Grfus{\epfvoa{k}} \Gr{\epfdis{\mu'}{r',s'}} &= \sum_{(r'',s'')} \vfusco{r,s+1}{r',s'+1}{r'',s''}{u,v} \Gr{\epfrel{\mu+\mu'-k}{r'',s''}} \notag \\
			&\mspace{100mu} + \sum_{r''=1}^{u-1} \vfusco{r,1}{r',1}{r'',1}{u,v} \Gr{\epfdis{\mu+\mu'-k}{u-r'',s+s'-v+1}}.
		\end{align}
	\end{subequations}
\end{proposition}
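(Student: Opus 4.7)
The plan is to derive these four Grothendieck fusion rules by pushing the corresponding $\pfvoa{k}$-rules of \cref{prop:pfgrfus} forward through the induction functor $\Ind{\blank} = \epfvoa{k}\fus{\pfvoa{k}}\blank$. The key input is the second part of \cref{res:fus}, which asserts that when $\Ind{\Mod{D}}$ and $\Ind{\Mod{E}}$ are untwisted $\epfvoa{k}$-modules, induction is tensor-compatible:
\begin{equation}
\Ind{\Mod{D}} \fus{\epfvoa{k}} \Ind{\Mod{E}} \cong \Ind{(\Mod{D} \fus{\pfvoa{k}} \Mod{E})}.
\end{equation}
Combined with the exactness of fusion that underlies \cref{ass:fusion}, this promotes induction to a ring homomorphism between the Grothendieck rings of $\pfcat{k}$ and $\epfcat{k}$.

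First, I would check that every module appearing in the rules of \cref{prop:pfgrfus} admits an induction to an untwisted $\epfvoa{k}$-module. By \cref{prop:epfdecomp} this is automatic for the atypicals $\pfdis{\mu}{r,s}$ and $\pfirr{\mu}{r}$; for a typical $\pfrel{\mu}{r,s}$ it requires $\mu \in \lat{L}' = \frac{1}{v}\ZZ$. Since $\lat{L}'$ is a lattice containing $\rlat$, and since $k = -w/v$ itself lies in $\lat{L}'$, the only weight shifts that appear in \cref{prop:pfgrfus}, namely $0$ and $\pm k$, preserve the condition $\mu + \mu' \in \lat{L}'$ whenever $\mu,\mu' \in \lat{L}'$. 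Thus induction is defined on every term of every fusion relation.

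Second, I would apply the ring homomorphism term-by-term, invoking the identifications
\begin{equation}
\Gr{\Ind{\pfrel{\mu}{r,s}}} = \Gr{\epfrel{\mu}{r,s}}, \quad
\Gr{\Ind{\pfdis{\mu}{r,s}}} = \Gr{\epfdis{\mu}{r,s}}, \quad
\Gr{\Ind{\pfirr{\mu}{r}}} = \Gr{\epfirr{\mu}{r}}
\end{equation}
from \cref{prop:epfdecomp}. Each of the four relations in \cref{prop:pfgrfus} then translates symbol-for-symbol into the claimed relation for $\epfvoa{k}$. The residual identifications $\epfmod{\lambda}{\star}^{\bullet} = \epfmod{\mu}{\star}^{\bullet}$ for $\lambda \equiv \mu \pmod{\lat{L}}$, also from \cref{prop:epfdecomp}, absorb the freedom in how the parameters are represented on the right-hand sides.

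The main obstacle is foundational rather than computational: one must know that the tensor-functoriality of induction from \cref{res:fus} truly applies in this setting of an \emph{infinite-order} simple-current extension of the non-rational vertex operator algebra $\pfvoa{k}$. This is precisely the content of the standing \cref{ass:cat,ass:fusion}, which provide the vertex tensor category structure of Huang--Lepowsky--Zhang for $\pfcat{k}$ and the exactness and Verlinde property needed to pass to Grothendieck rings. Granted these, the argument reduces to the mechanical translation outlined above.
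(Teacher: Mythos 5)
Your proposal is correct and follows the paper's own route: the paper obtains these rules by combining \cref{res:fus} (induction as a tensor functor) with the $\pfvoa{k}$ Grothendieck fusion rules of \cref{prop:pfgrfus} and the identifications of \cref{prop:epfdecomp}. Your additional verification that the weight shifts $0$ and $\pm k$ preserve membership in $\lat{L}'$ is a sensible explicit check of a point the paper leaves implicit.
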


\subsection{Examples} \label{epfexamples}

As noted in \cref{pfexamples}, the parafermion coset $\pfvoa{-1/2}$ is isomorphic to the singlet \voa{} $\singvoa{1,2}$.  It is therefore not surprising that the extension $\epfvoa{-1/2}$ is isomorphic to the triplet $\tripvoa{1,2}$.  This is consistent with $\lat{L} = 2\ZZ$ and $\pfirrdim{\pm 2}{1} = 3$; for more detail, see \cite{CreCos13}.  We also recalled that $\pfvoa{-4/3}$ is isomorphic to the singlet $\singvoa{1,3}$.  However, $\epfvoa{-4/3}$ is not $\tripvoa{1,3}$: as discussed in \cref{NotTheTriplet}, it is rather a $\ZZ_2$-orbifold of $\tripvoa{1,3}$.

Consider the extended parafermion coset $\epfvoa{-2/3}$, remembering that $\pfvoa{-2/3}$ has been identified with the bosonic orbifold of the supersinglet \svoa{} $\ssingvoa{1,3}$.  Since $\lat{L} = 4\ZZ$ and $\pfirrdim{\pm 4}{1} = 9$, $\epfvoa{-2/3}$ is not the bosonic orbifold of the supertriplet \svoa{} $\stripvoa{1,3}$ \cite{AdaN=109}.  Indeed, $\epfvoa{-2/3}$ has three simple currents $\epfirr{0}{3}$, $\epfirr{2}{1}$ and $\epfirr{2}{3}$ whose spaces of ground states have dimensions and conformal weights $1$ and $\frac{3}{2}$, $2$ and $\frac{5}{2}$, and $2$ and $3$, respectively.  Under fusion, they form (along with the vacuum module $\epfirr{0}{1}$) a group isomorphic to $\ZZ_2 \oplus \ZZ_2$:
\begin{equation}
	\epfirr{\mu}{r} \fus{\epfvoa{-2/3}} \epfirr{\mu'}{r'} = \epfirr{\mu+\mu'}{r+r'-1 \mspace{-15mu} \pmod{4}} \qquad \text{(\(\mu, \mu' \in \set{0,2}\), \(r,r' \in \set{1,3}\)).}
\end{equation}
The simple current extension of $\epfvoa{-2/3}$ by this group of simple currents is a \svoa{} with strong generating fields of conformal dimensions $\frac{3}{2}$, $2$, $\frac{5}{2}$, $\frac{5}{2}$, $\frac{5}{2}$, $3$, $3$ and $3$. This \svoa{} and the supertriplet $\stripvoa{1,3}$ are thus extensions of the same \svoa{}  $\ssingvoa{1,3}$ with same type of strong generators. We expect that one can now prove that they have to coincide by using the Jacobi identity to show that $\ssingvoa{1,3}$ admits at most one such extension. We omit this long computation and refer to the proofs of \cite[Thm.~3.1]{AraMin17} and \cite[Lem.~8.2]{AraBP15} for similar arguments. 

We shall instead demonstrate this coincidence of \svoas{} by proving that 
the simple current extension of $\epfvoa{-2/3}$ is a 
subalgebra of $\stripvoa{1,3}$. Since both \voas{} have the same type of minimal strong generating set, they 
must therefore coincide. We noted in \cref{pfexamples} 
that $\epfirr{0}{3}$ is contained in $\ssingvoa{1,3}$. 
Hence, we only need 
find one other 
generator of the group of simple currents inside 
$\stripvoa{1,3}$. 
We will now show that indeed $\epfirr{2}{1}$ is contained in $\stripvoa{1,3}$.

For this, we use the notation of \cite{AdaN=109}, while the spirit of the proof is closer to the arguments in \cite{CreCos13}. Let $\VOA{F}$ be the vertex superalgebra of a single free fermion. Consider the lattice $\alpha\ZZ\oplus\beta\ZZ$, with $\alpha^2=-\beta^2=3$ and $\alpha\beta=0$. The $\stripvoa{1,3}$-algebra is defined \cite{AdaN=109} as the kernel of a screening operator $Q$ acting on 
the tensor product of $\VOA{F}$ and the lattice \voa{} $\lvoa{\alpha\ZZ}$:
\begin{equation} \label{eq:DefSW13}
	\stripvoa{1,3} = \ker_Q(\lvoa{\alpha\ZZ} \otimes \VOA{F}) = \bigoplus_{n \in \ZZ} \ker_Q(\fock{n\alpha} \otimes \VOA{F}).
\end{equation}
In particular, it is a 
subalgebra of 
$\lvoa{\alpha\ZZ \oplus \beta\ZZ} \otimes \VOA{F}$.

We denote the vertex operator corresponding to the highest-weight vector of the Fock module $\fock{n\alpha+m\beta}$ by $\ee^{n\alpha+m\beta}$. The odd triplet fields of conformal weight $\frac{5}{2}$ are denoted by $E$, $H$ and $F$ in \cite{AdaN=109} and they belong to 
$\fock{\alpha} \otimes \VOA{F}$, $\fock{0} \otimes \VOA{F}$ and $\fock{-\alpha} \otimes \VOA{F}$, respectively.  Now, $\stripvoa{1,3}$ is simple
\cite[Cor.~10.1]{AdaN=109}, so it admits
a non-degenerate invariant bilinear form \cite{Li} and thus 
every field of conformal weight $h$ must possess a conjugate
field of same conformal weight such that their operator product expansion involves the identity field. 
For the super-triplet field $E$, the only possibility for the conjugate field is $F$, 
so we have
\begin{equation}
	E(z)F(w) \sim \frac{a}{(z-w)^5} +\frac{0}{(z-w)^4} +\cdots,
\end{equation}
for some non-zero $a$. 
We 
rescale 
$E$ and/or $F$ so
that $a=-\frac{2}{3}$.

Define now the fields $e(z)=E(z)e^{\beta} \in \fock{\alpha+\beta} \otimes \VOA{F}$ and $f(z)=F(z)e^{-\beta} \in \fock{-\alpha-\beta} \otimes \VOA{F}$ whose 
conformal weights are both $\frac{5}{2}-\frac{3}{2}=1$. Let $h(z)$ be the Heisenberg field of $\lvoa{\ZZ\beta}$, normalised such that 
\begin{equation}
h(z)e(w) \sim \frac{2e(w)}{z-w}, \qquad h(z)f(w) \sim \frac{-2f(w)}{z-w}\qquad \text{so that} \qquad h(z)h(w)\sim \frac{-4/3}{(z-w)^2}.
\end{equation}
Then, the
fields $e$, $h$ and $f$ generate an
affine \voa{} $\VOA{X}$ with $\alg{g} = \SLA{sl}{2}$ and level $k = -\frac{2}{3}$. 
Suppose that $\VOA{X}$ is not
simple, hence that it has
a non-trivial proper ideal $\VOA{I}$. Let $v$ be a vector of minimal 
conformal weight in
$\VOA{I}$. Applying the zero-modes of $e$ or $f$ if necessary, we may
assume that $v$ has $\SLA{sl}{2}$-weight zero and hence belongs to
the commutant of the Heisenberg field $h$ in
$\VOA{X}$.  However, we identified this commutant as
the simple supersinglet algebra $\ssingvoa{1,3}$ in \cref{pfexamples}, 
which obviously
has no non-trivial proper ideals.  This contradiction proves that $\VOA{X}$ is simple, hence
that $\VOA{X}\cong \slvoa{-2/3}$.

We thus have the following inclusion of \voas:
\begin{equation}
\slvoa{-2/3} \subset \ker_Q(\lvoa{(\alpha+\beta)\ZZ}\otimes \VOA{F}). 
\end{equation}
Here, we have noted that $Q$ annihilates both $e$ and $f$.  Decomposing the lattice \voa{} into Fock spaces now gives 
\begin{equation}
\ker_Q(\lvoa{(\alpha+\beta)\ZZ}\otimes \VOA{F}) \cong \bigoplus_{n\in\ZZ} \ker_Q(\fock{n\alpha+n\beta}\otimes \VOA{F}) \cong \bigoplus_{n\in\ZZ} \ker_Q(\fock{n\alpha}\otimes \VOA{F}) \otimes \fock{2n},
\end{equation}
whilst the decomposition of the affine \voa{} into $\VOA{H} \otimes \pfvoa{-2/3}$-modules is
\begin{equation}
\slvoa{-2/3} \cong \bigoplus_{n\in\ZZ} \pfirr{2n}{1} \otimes \fock{2n}.
\end{equation}
It follows that $\pfirr{2n}{1} \subset \ker_Q(\fock{n\alpha}\otimes \VOA{F})$, for all $n \in \ZZ$,   
and so
the simple current $\epfirr{2}{1}$ satisfies
\begin{equation}
\epfirr{2}{1} = \bigoplus_{n\in 2\ZZ+1} \pfirr{2n}{1} \subset \bigoplus_{n\in\ZZ} \pfirr{2n}{1} \subset \bigoplus_{n\in\ZZ} \ker_Q(\fock{n\alpha}\otimes \VOA{F}) = \stripvoa{1,3},
\end{equation}
by \eqref{eq:DefSW13}.  This proves that the simple current extension of $\epfvoa{-2/3}$ introduced above is isomorphic to $\stripvoa{1,3}$, as claimed.

We conclude by noting that the spectrum of irreducible $\epfvoa{-2/3}$-modules comprises $6$ of $\slirr{}$-type, $6$ (inequivalent) of $\sldis{}$-type and $24$ of $\slindrel{}$-type.  Summing over the orbits of the group of simple currents, we arrive at $2$, $2$ and $8$ inequivalent $\stripvoa{1,3}$-modules (not accounting for global parities) whose properties are summarised in the following table: \\[1.5ex]
{\centering	\scalebox{0.82}{
	\begin{tabular}{c|CC|CC|CCCCCCCC}
		& \pfirr{0}{1} & \pfirr{1}{2} & \pfdis{2/3}{1,1} & \pfdis{5/3}{2,1} & \pfrel{0}{1;1} & \pfrel{1/3}{1;1} & \pfrel{1}{1;1} & \pfrel{5/3}{1,1} & \pfrel{0}{2,1} & \pfrel{2/3}{2,1} & \pfrel{1}{2,1} & \pfrel{4/3}{2,1} \\
		$\Delta$ & 0 & \frac{15}{16} & \frac{1}{2} & -\frac{1}{16} & -\frac{1}{6} & -\frac{1}{8} & \frac{5}{24} & -\frac{1}{8} & -\frac{5}{48} & \frac{1}{16} & \frac{13}{48} & \frac{1}{16} \\
		Sector & (NS,NS) & (R,R) & (NS,NS) & (R,R) & (NS,NS) & (NS,R) & (R,R) & (NS,R) & (R,NS) & (R,NS) & (R,R) & (R,NS)
	\end{tabular}
}} \\[1.5ex]
Here, $\Delta$ denotes the conformal weight of the ground states and ``Sector'' gives the $N=1$ and super-$\VOA{W}_3$ sectors as an ordered pair.  

\subsection{Standard $\epfvoa{k}$-characters and modularity}

Given the decompositions \eqref{eq:defBRel}, we can clearly sum the characters \eqref{eq:pfrelchar} of the standard $\pfvoa{k}$-modules to obtain those of the standard $\epfvoa{k}$-modules.
\begin{proposition} \label{prop:epfstchars}
	Given \cref{ass:cat,ass:fusion}, the characters of the standard $\epfvoa{k}$-modules are
	\begin{equation} \label{eq:epfstchar}
		\fch{\epfrel{\mu}{r,s}}{q} = \fch{\epfindrel{\mu}{r,s}^{\pm}}{q} = \frac{\vch{u,v}{r,s}{q}}{\eta(q)} \sum_{\lambda \in \mu + \lat{L}} q^{-\lambda^2 / 4k}.
	\end{equation}
\end{proposition}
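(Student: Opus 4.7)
The plan is to reduce the computation to the already-established characters of the standard $\pfvoa{k}$-modules via the decomposition results already obtained for the $\epfvoa{k}$-modules in \cref{prop:epfdecomp}. Since these characters are graded traces over conformal weight only, and the $\epfvoa{k}$-module in question decomposes as a direct sum of $\pfvoa{k}$-modules (which are graded by $\tilde{L}_0$ in a manner compatible with the $\epfvoa{k}$-grading, as $\pfvoa{k} \subset \epfvoa{k}$ shares the same Virasoro element), the character of the extended module is simply the sum of the characters of the $\pfvoa{k}$-summands.

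Concretely, I would first invoke \cref{prop:epfdecomp} to write
\begin{equation}
\Res{\epfrel{\mu}{r,s}} \cong \bigoplus_{\lambda \in \mu + \lat{L}} \pfrel{\lambda}{r,s}, \qquad
\Res{\epfindrel{\mu}{r,s}^{\pm}} \cong \bigoplus_{\lambda \in \mu + \lat{L}} \pfindrel{\lambda}{r,s}^{\pm},
\end{equation}
and then take $\tilde{L}_0$-graded traces on both sides to obtain $\fch{\epfrel{\mu}{r,s}}{q} = \sum_{\lambda \in \mu + \lat{L}} \fch{\pfrel{\lambda}{r,s}}{q}$, and similarly for the indecomposable case. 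Next, I would substitute the $\pfvoa{k}$-character formula \eqref{eq:pfrelchar} from \cref{cosetEtype}, which asserts that $\fch{\pfrel{\lambda}{r,s}}{q} = \fch{\pfindrel{\lambda}{r,s}^{\pm}}{q} = q^{-\lambda^2/4k} \vch{u,v}{r,s}{q} / \eta(q)$. The factor $\vch{u,v}{r,s}{q}/\eta(q)$ is independent of $\lambda$ and can be pulled outside the sum, yielding the claimed formula. This simultaneously yields the equality $\fch{\epfrel{\mu}{r,s}}{q} = \fch{\epfindrel{\mu}{r,s}^{\pm}}{q}$, as the two decompositions differ only in the $\pfvoa{k}$-indecomposables appearing as summands, but those share characters by \cref{cosetEtype}.

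The only mild subtlety is justifying the convergence of the infinite sum $\sum_{\lambda \in \mu + \lat{L}} q^{-\lambda^2 / 4k}$, which is why the hypothesis $k < 0$ is needed: since $\lat{L} = 2w\ZZ$ with $w = -kv > 0$, the exponents $-\lambda^2/4k$ tend to $+\infty$ quadratically as $|\lambda| \to \infty$, so the series converges absolutely for $|q|<1$ and defines a holomorphic function of $\tau$ in the upper half-plane (in fact a classical theta-like series on the one-dimensional lattice $\lat{L}$). This convergence is really the only place where anything beyond bookkeeping enters; everything else is direct substitution using the previously established decomposition and character results. There is no essential obstacle — the content of the proposition lies in \cref{prop:epfdecomp,cosetEtype}, and this statement simply records their combined consequence at the level of characters.
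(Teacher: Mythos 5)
Your proposal is correct and follows exactly the paper's (implicit) argument: the paper simply notes that, given the decompositions of \cref{prop:epfdecomp} (and \eqref{eq:defBRel}), one sums the standard $\pfvoa{k}$-characters \eqref{eq:pfrelchar} over the coset $\mu + \lat{L}$ and pulls out the common factor $\vch{u,v}{r,s}{q}/\eta(q)$. Your added remark on convergence of the theta-like sum for $k<0$ is a harmless (and correct) elaboration of what the paper takes for granted.
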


To investigate the modularity of these characters, we introduce the following theta functions associated to the lattice $\lat{L}$:
\begin{equation} \label{eq:DefTheta}
	\fjth{\mu + \lat{L}}{z,q} = \sum_{\lambda \in \mu + \lat{L}} z^{\lambda} q^{-\lambda^2 / 4k} \qquad \text{(\(\mu \in \lat{L}'\)).}
\end{equation}
Writing $z = \ee^{2 \pi \ii \zeta}$ and $q = \ee^{2 \pi \ii \tau}$ as usual, the modular S-transforms of these theta functions are given by
\begin{equation} \label{eq:ThetaS}
	\fjth{\mu + \lat{L}}{\zeta / \tau \mid -1/\tau} = \frac{\sqrt{-\ii \tau} \, \ee^{-2 \pi \ii k \zeta^2 / \tau}}{\sqrt{\abs*{\lat{L}' / \lat{L}}}} \sum_{\lambda \in \lat{L}' / \lat{L}} \ee^{\ii \pi \lambda \mu / k} \fjth{\lambda + \lat{L}}{\zeta \mid \tau}.
\end{equation}

To compare with characters, we shall need to set $\zeta = 0$ ($z=1$) and will then drop $\zeta$ (or $z$) from the list of arguments.  With this convention, it is obvious that the theta functions are invariant under reflection about zero, $\fjth{-\mu + \lat{L}}{\tau} = \fjth{\mu + \lat{L}}{\tau}$, and translations in $\lat{L}$.  We shall refer to these properties as the affine Weyl symmetry of the theta functions.  Moreover, if we set $p = \frac{1}{2} \abs*{\lat{L}' / \lat{L}} = vw$, then \eqref{eq:ThetaS} becomes
\begin{equation} \label{eq:ThetaS'}
	\fjth{m/v + \lat{L}}{-1/\tau} = \frac{\sqrt{-\ii \tau}}{\sqrt{2p}} \sum_{\ell=0}^{2p-1} \ee^{-\ii \pi \ell m/p} \fjth{\ell/v + \lat{L}}{\tau} = \sqrt{-\ii \tau} \sum_{\ell=0}^p \Styp{m \ell} \fjth{\ell/v + \lat{L}}{\tau},
\end{equation}
where
\begin{equation} \label{eq:DefStyp}
	\Styp{m \ell} =
	\begin{cases*}
		\sqrt{\frac{1}{2p}} \cos \frac{\pi \ell m}{p} & if \(\ell \in p \ZZ\), \\
		\sqrt{\frac{2}{p}} \cos \frac{\pi \ell m}{p} & otherwise
	\end{cases*}
\end{equation}
and we may restrict $m$ to $0,1,\dots,p$.

Since the standard $\epfvoa{k}$-characters \eqref{eq:epfstchar} can be written in the form
\begin{equation}
	\fch{\epfrel{\mu}{r,s}}{\tau} = \frac{\vch{u,v}{r,s}{\tau}}{\eta(\tau)} \fjth{\mu + \lat{L}}{\tau},
\end{equation}
it is now easy to obtain their modular S-transforms.  We recall that the S-matrix of the Virasoro minimal model $\virminmod{u,v}$ is
\begin{equation} \label{eq:DefSvir}
	\Svir{u,v}{r,s}{r',s'} = -2 \sqrt{\frac{2}{uv}} \brac{-1}^{rs'+r's} \sin \frac{v \pi r r'}{u} \sin \frac{u \pi s s'}{v},
\end{equation}
where the entries $(r,s)$ and $(r',s')$ run over the irreducible $\virminmod{u,v}$-modules of the Kac table $\kactable{u,v}$, see \eqref{eq:DefKacTable}.  As before, sums indexed by $\virminmod{u,v}$-modules will always be assumed to run over $\kactable{u,v}$.
\begin{proposition} \label{prop:epfstcharmod}
	Given \cref{ass:cat,ass:fusion}, the modular S-transforms of the standard $\epfvoa{k}$-characters are
	\begin{equation} \label{eq:epfstcharmod}
		\fch{\epfrel{m/v}{r,s}}{-1/\tau} = \sum_{(r',s')} \sum_{\ell=0}^p \Svir{u,v}{r,s}{r',s'} \Styp{m \ell} \fch{\epfrel{\ell/v}{r',s'}}{\tau}.
	\end{equation}
\end{proposition}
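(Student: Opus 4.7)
The plan is essentially a bookkeeping exercise combining the S-transforms of three well-understood factors. From \eqref{eq:epfstchar} we have
\begin{equation*}
\fch{\epfrel{m/v}{r,s}}{-1/\tau} = \frac{\vch{u,v}{r,s}{-1/\tau}}{\eta(-1/\tau)}\,\fjth{m/v+\lat{L}}{-1/\tau},
\end{equation*}
so the strategy is simply to substitute the modular transforms of each factor: $\eta(-1/\tau) = \sqrt{-\ii\tau}\,\eta(\tau)$ for the Dedekind eta; the classical S-transform
\begin{equation*}
\vch{u,v}{r,s}{-1/\tau} = \sum_{(r',s')} \Svir{u,v}{r,s}{r',s'}\, \vch{u,v}{r',s'}{\tau}
\end{equation*}
for the Virasoro minimal model character, with $\Svir{u,v}{r,s}{r',s'}$ as in \eqref{eq:DefSvir}; and the theta-function transform \eqref{eq:ThetaS'} for $\fjth{m/v+\lat{L}}{-1/\tau}$.

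After this substitution, the factor of $\sqrt{-\ii\tau}$ produced in the numerator by the theta transform cancels exactly against the one coming from $\eta$ in the denominator, so re-collecting terms produces a double sum over $(r',s')\in\kactable{u,v}$ and $\ell\in\{0,1,\dots,p\}$ in which the summands factor as $\Svir{u,v}{r,s}{r',s'}\,\Styp{m\ell}$ times $\vch{u,v}{r',s'}{\tau}\,\fjth{\ell/v+\lat{L}}{\tau}/\eta(\tau)$. Recognising the latter product as $\fch{\epfrel{\ell/v}{r',s'}}{\tau}$ via \eqref{eq:epfstchar} then yields the claimed identity.

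The only step that requires any genuine work is the derivation of \eqref{eq:ThetaS'} from the Poisson-summation formula \eqref{eq:ThetaS}: one specialises $\zeta=0$, uses the reflection symmetry $\fjth{-\mu+\lat{L}}{\tau}=\fjth{\mu+\lat{L}}{\tau}$ to fold the sum over $\lat{L}'/\lat{L}\cong\ZZ/2p\ZZ$ down to the fundamental domain $\{0,1,\dots,p\}$, and then repackages the resulting linear combination of exponentials as cosines to obtain \eqref{eq:DefStyp}. The only subtlety in this folding is that the two boundary cosets $\ell=0$ and $\ell=p$ are fixed by the reflection whereas the $p-1$ interior cosets pair up; this asymmetry is precisely what produces the two cases in the definition of $\Styp{m\ell}$. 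No conceptual obstacle arises, and the entire argument amounts to keeping the normalisations straight.
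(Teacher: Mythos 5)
Your proposal is correct and follows exactly the route the paper intends: the paper states this proposition without a separate proof precisely because it is the immediate consequence of combining the factorised character $\fch{\epfrel{\mu}{r,s}}{\tau} = \vch{u,v}{r,s}{\tau}\,\fjth{\mu+\lat{L}}{\tau}/\eta(\tau)$ with the S-transforms \eqref{eq:DefSvir} and \eqref{eq:ThetaS'} and the cancellation of the $\sqrt{-\ii\tau}$ factors. Your folding argument for deriving \eqref{eq:ThetaS'} from \eqref{eq:ThetaS}, including the special treatment of the self-paired cosets $\ell=0$ and $\ell=p$, matches the normalisations in \eqref{eq:DefStyp}.
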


\begin{remark}
	Note that the S-matrix appearing in \eqref{eq:ThetaS} is symmetric whilst that of \eqref{eq:DefStyp} is not.  This is not unexpected because the standard $\epfvoa{k}$-characters, as we have defined them, are not linearly independent.
\end{remark}

\subsection{Atypical $\epfvoa{k}$-characters}

In principle, the decompositions of \cref{prop:epfdecomp} also yield character formulae for the irreducible atypical $\epfvoa{k}$-modules $\epfirr{\mu}{r}$ and $\epfdis{\mu}{r,s}$.  For example, substituting the resolution formulae from \eqref{eq:pfatypchar} results in the following expression for the former:
\begin{equation} \label{eq:epfirrchar1}
	\ch{\epfirr{\mu}{r}} = \sum_{s=1}^{v-1} (-1)^{s-1} \frac{\vch{u,v}{r,s}{q}}{\eta(q)} \sum_{\lambda \in \mu + \lat{L}} \sum_{m=0}^{\infty} \brac*{q^{-(\lambda + 2mw-sk)^2 / 4k} - q^{-(\lambda + 2(m+1)w+sk)^2 / 4k}}.
\end{equation}
Consider the double sum in this expression, rewritten in the form
\begin{equation}
	\sum_{\ell \in \ZZ} \sum_{m=0}^{\infty} \brac*{q^{-(\mu-sk + 2w(\ell+m))^2 / 4k} - q^{-(\mu+sk + 2w(\ell+m+1))^2 / 4k}}.
\end{equation}
This sum is clearly not absolutely convergent, so we must take care in how we manipulate its terms.

Note first that the \lhs{} of \eqref{eq:epfirrchar1} is invariant under $\mu \mapsto -\mu$, by \cref{rem:pfirraltchar,prop:epfdecomp}.  The same must therefore be true for the \rhs{}.  Replacing $\ch{\epfirr{\mu}{r}}$ by $\frac{1}{2} (\ch{\epfirr{\mu}{r}} + \ch{\epfirr{-\mu}{r}})$ transforms the double sum into $\frac{1}{2} \brac*{A_{\mu+sk}(q) + A_{-\mu+sk}(q)}$, where
\begin{equation} \label{eq:DefA}
	A_{\lambda}(q) = \sum_{\ell \in \ZZ} \sum_{m=0}^{\infty} \brac*{q^{-(\lambda - 2w(\ell+m))^2 / 4k} - q^{-(\lambda + 2w(\ell+m+1))^2 / 4k}}.
\end{equation}
This will be identified (see \cref{partialTheta} below) with a linear combination of the theta functions $\jth{\mu + \lat{L}}$, $\mu \in \lat{L}'$, of \eqref{eq:DefTheta} and their derivatives
\begin{equation} \label{eq:DefTheta'}
	\fdjth{\mu + \lat{L}}{z,q} = -\frac{z \partial_z}{2w} \fjth{\mu + \lat{L}}{z,q}
	= -\frac{\mu}{2w} \fjth{\mu + \lat{L}}{z,q} + \sum_{\ell \in \ZZ} \ell z^{\mu - 2w \ell} q^{-(\mu - 2w \ell)^2 / 4k}.
\end{equation}
As usual, we may omit $z$ (or $\zeta$) from the argument of these theta functions, understanding that it is then evaluated at $z=1$ (or $\zeta = 0$).  These specialised derivatives are affine Weyl-antisymmetric, being $\lat{L}$-periodic and anti-invariant under reflection: $\fdjth{-\mu + \lat{L}}{q} = -\fdjth{\mu + \lat{L}}{q}$.

We record the following easily proven identities for the \lcnamecref{partialTheta} that follows:
\begin{subequations}
	\begin{align}
		\fjth{\mu + \lat{L}}{q} &= \sum_{\ell=0}^{\infty} \brac*{q^{-(\mu - 2w \ell)^2 / 4k} + q^{-(\mu + 2w (\ell+1))^2 / 4k}}, \label{eq:ThetaIdentity} \\
		\fdjth{\mu + \lat{L}}{q} &= -\frac{\mu}{2w} \fjth{\mu + \lat{L}}{q} + \sum_{\ell=0}^{\infty} (\ell+1) \brac*{q^{-(\mu - 2w \ell)^2 / 4k} - q^{-(\mu + 2w (\ell+1))^2 / 4k}} - \sum_{\ell=0}^{\infty} q^{-(\mu - 2w \ell)^2 / 4k}. \label{eq:Theta'Identity}
	\end{align}
\end{subequations}

\begin{lemma} \label{partialTheta}
For any $\lambda \in \lat{L}'$, we have
\begin{equation}
	A_{\lambda}(q) = 2 \fdjth{\lambda + \lat{L}}{q} + \brac*{1 + \frac{\lambda}{w}} \fjth{\lambda + \lat{L}}{q}.
\end{equation}
\end{lemma}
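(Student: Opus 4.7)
The plan is to rewrite the conditionally convergent double sum $A_\lambda(q)$ as a single, absolutely convergent Gaussian-type series with explicit integer coefficients, and then recognise the result as a linear combination of $\fjth{\lambda + \lat{L}}{q}$ and $\fdjth{\lambda + \lat{L}}{q}$.  With the shorthand $f_n := q^{-(\lambda - 2wn)^2/4k}$, so that $f_{-n-1} = q^{-(\lambda + 2w(n+1))^2/4k}$, the definition of $A_\lambda$ becomes
\begin{equation*}
A_\lambda(q) = \sum_{\ell \in \ZZ} \sum_{m=0}^{\infty} \bigl(f_{\ell+m} - f_{-\ell-m-1}\bigr) = \sum_{\ell \in \ZZ} \bigl(T_\ell - T_\ell'\bigr),
\end{equation*}
where $T_\ell = \sum_{n \ge \ell} f_n$ and $T_\ell' = \sum_{n \le -\ell-1} f_n$.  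For $k<0$ and $\abs{q}<1$, the inner sums converge by Gaussian decay, but the outer sum is only conditionally convergent, so I will regularise by truncating to $\abs{\ell} \le L$ and taking $L \to \infty$ at the end.

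The first main step is to exchange the order of summation in the truncated version $A_\lambda^{(L)}(q) = \sum_{\ell=-L}^{L}(T_\ell - T_\ell')$.  A direct count of the number of $\ell \in [-L,L]$ satisfying $\ell \le n$ (respectively $\ell \le -n-1$) gives the coefficient of $f_n$ in $\sum_\ell T_\ell$ (respectively $\sum_\ell T_\ell'$) case by case according to the position of $n$ relative to $\pm L$.  Subtracting one from the other yields
\begin{equation*}
A_\lambda^{(L)}(q) = (2L+1)\biggl[\sum_{n \ge L} f_n - \sum_{n \le -L-1} f_n\biggr] + \sum_{n=-L}^{L-1}(2n+1)\, f_n,
\end{equation*}
the central coefficient $2n+1$ emerging cleanly as $(n+L+1)-(L-n)$ in the overlap region.

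Letting $L \to \infty$ then finishes the rearrangement: because $k<0$ and $\abs{q}<1$, the tails $\sum_{n\ge L} f_n$ and $\sum_{n\le -L-1} f_n$ decay faster than any polynomial in $L^{-1}$, so the boundary contribution vanishes and what remains is the absolutely convergent identity
\begin{equation*}
A_\lambda(q) = \sum_{n\in\ZZ} (2n+1)\, f_n.
\end{equation*}

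The final step is the translation into theta-function language.  Specialising the definitions \eqref{eq:DefTheta} and \eqref{eq:DefTheta'} at $z=1$ gives $\fjth{\lambda+\lat{L}}{q} = \sum_n f_n$ and $\sum_n n\, f_n = \fdjth{\lambda+\lat{L}}{q} + \tfrac{\lambda}{2w}\fjth{\lambda+\lat{L}}{q}$, so that
\begin{equation*}
A_\lambda(q) = 2\sum_{n\in\ZZ} n\, f_n + \sum_{n\in\ZZ} f_n = 2\fdjth{\lambda+\lat{L}}{q} + \Bigl(1 + \tfrac{\lambda}{w}\Bigr)\fjth{\lambda+\lat{L}}{q},
\end{equation*}
which is the claimed identity.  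The one genuinely delicate point is the careful bookkeeping of the boundary term when rearranging the conditionally convergent outer sum; once this is handled correctly, everything else reduces to routine manipulation of Gaussian series.
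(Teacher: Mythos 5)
Your argument is correct and establishes the lemma, but it organises the rearrangement of the conditionally convergent double sum differently from the paper. The paper splits the outer sum into its $\ell \ge 0$ and $\ell \le -1$ halves: the first is evaluated by substituting $n = \ell+m$ and swapping the order of summation, giving $\sum_{n \ge 0} (n+1)\brac{f_n - f_{-n-1}}$ in your notation $f_n = q^{-(\lambda - 2wn)^2/4k}$, which is then converted into theta functions via the auxiliary identities \eqref{eq:ThetaIdentity} and \eqref{eq:Theta'Identity}; the second half is reduced to the first by the substitution $\ell \mapsto -\ell-1$, $m = n + 2\ell + 1$, followed by a pairwise cancellation that leaves a single surviving term for each $\ell$. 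You instead truncate the outer sum symmetrically to $\abs{\ell} \le L$, interchange the now-finite outer sum with the inner one, count multiplicities directly (your $(n+L+1) - (L-n) = 2n+1$ in the overlap region and $\pm(2L+1)$ on the two tails are both correct), and discard the boundary contribution using the super-polynomial decay of the Gaussian tails. This yields the clean intermediate identity $A_\lambda(q) = \sum_{n \in \ZZ} (2n+1)\, q^{-(\lambda - 2wn)^2/4k}$, from which the lemma follows at once from the definitions \eqref{eq:DefTheta} and \eqref{eq:DefTheta'} alone, bypassing \eqref{eq:ThetaIdentity} and \eqref{eq:Theta'Identity} entirely. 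The two readings of the doubly infinite $\ell$-sum agree because each summand $T_\ell - T_\ell'$ is itself a difference of two rapidly decaying Gaussian tails, so the outer sum converges absolutely once the inner sums are performed; in particular your symmetric-truncation limit coincides with the paper's sum of the two half-sums, so no discrepancy can arise from the choice of regularisation. What your route buys is a more transparent closed form for $A_\lambda$ and a single uniform bookkeeping step; what the paper's route buys is that it never introduces a regulator, at the price of the less obvious reflection-and-cancellation argument for the negative-$\ell$ half.
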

\begin{proof}
Consider first the partial sum
\begin{equation}
	A_{\lambda}^+(q) = \sum_{\ell=0}^{\infty} \sum_{m=0}^{\infty} \brac*{q^{-(\lambda - 2w(\ell+m))^2 / 4k} - q^{-(\lambda + 2w(\ell+m+1))^2 / 4k}}.
\end{equation}
Replacing $m$ by $n = \ell + m$ and swapping the order of summation gives
\begin{align}
	A_{\lambda}^+(q) &= \sum_{n=0}^{\infty} (n+1) \brac*{q^{-(\lambda - 2wn)^2 / 4k} - q^{-(\lambda + 2w(n+1))^2 / 4k}} \notag \\
	&= \fdjth{\lambda + \lat{L}}{q} + \frac{\lambda}{2w} \fjth{\lambda + \lat{L}}{q} + \sum_{n=0}^{\infty} q^{-(\lambda - 2wn)^2 / 4k},
\end{align}
by \eqref{eq:Theta'Identity}.

Sending $\ell$ to $-\ell-1$ and setting $m=n+2\ell+1$ in the complementary partial sum
\begin{equation}
	A_{\lambda}^-(q) = \sum_{\ell=-\infty}^{-1} \sum_{m=0}^{\infty} \brac*{q^{-(\lambda - 2w(\ell+m))^2 / 4k} - q^{-(\lambda + 2w(\ell+m+1))^2 / 4k}},
\end{equation}
we instead arrive at
\begin{equation}
	A_{\lambda}^-(q) = A_{\lambda}^+(q) + \sum_{\ell=0}^{\infty} \sum_{n=-2\ell-1}^{-1} \brac*{q^{-(\lambda - 2w(\ell+n))^2 / 4k} - q^{-(\lambda + 2w(\ell+n+1))^2 / 4k}}.
\end{equation}
Noting that for fixed $\ell$, the summand indexed by $n$ precisely cancels that indexed by $-2\ell-1-n$, we see that only the $n=-2\ell-1$ summand contributes.  We conclude that
\begin{equation}
	A_{\lambda}^-(q) = A_{\lambda}^+(q) + \fjth{\lambda + \lat{L}}{q} - 2 \sum_{\ell=0}^{\infty} q^{-(\lambda - 2w \ell)^2 / 4k},
\end{equation}
by \eqref{eq:ThetaIdentity}.  Since $A_{\lambda}(q) = A_{\lambda}^+(q) + A_{\lambda}^-(q)$, the proof is complete.
\end{proof}

Recalling that $\jth{\mu + \lat{L}}$ and $\djth{\mu + \lat{L}}$ are Weyl-symmetric and Weyl-antisymmetric, respectively, we can use this result to express the character formula \eqref{eq:epfirrchar1} as follows.
\begin{proposition} \label{prop:epfirrchar=theta}
	Given \cref{ass:cat,ass:fusion}, we have
	\begin{align} \label{eq:epfirrchar=theta}
		\fch{\epfirr{\mu}{r}}{q} &= \sum_{s=1}^{v-1} (-1)^{s-1} \frac{\vch{u,v}{r,s}{q}}{\eta(q)} \biggl[ \fdjth{\mu + sk + \lat{L}}{q} - \fdjth{\mu - sk + \lat{L}}{q} \biggr. \notag \\
		&\mspace{100mu} \biggl. + \frac{\mu - (v-s)k}{2w} \fjth{\mu + sk + \lat{L}}{q} - \frac{\mu + (v-s)k}{2w} \fjth{\mu - sk + \lat{L}}{q} \biggr],
	\end{align}
	for all $r=1,\dots,u-1$ and $\mu = \lambda_{r,0} = r-1 \pmod{\rlat}$.
\end{proposition}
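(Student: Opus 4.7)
The plan is to take the character formula (eq:epfirrchar1), obtained by combining Proposition \ref{prop:epfdecomp} with the resolution-based expression (eq:pfirrchar), and rewrite its inner double sum in terms of the theta functions (eq:DefTheta) and derivatives (eq:DefTheta'). Since (eq:epfirrchar1) is only conditionally convergent, the approach must go through a symmetrization step before Lemma \ref{partialTheta} can be applied termwise. The inner sum $S_{\mu,s}(q)$ over $\ell\in\ZZ$ and $m\geq 0$ is \emph{not} of the form $A_{\lambda}(q)$ in (eq:DefA) for any single $\lambda$: the first Gaussian matches $A_{\lambda}$ with $\lambda=-\mu+sk$ (after a sign flip inside the square), while the second matches with $\lambda=\mu+sk$.

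To reconcile this, I would first note the character identity $\ch{\epfirr{\mu}{r}}=\ch{\epfirr{-\mu}{r}}$, which follows from Proposition \ref{prop:epfdecomp} together with the character identity $\fch{\pfirr{\lambda}{r}}{q}=\fch{\pfirr{-\lambda}{r}}{q}$ in Remark \ref{rem:pfirraltchar}, using that $\lambda\mapsto-\lambda$ is a bijection of $\mu+\lat{L}$ onto $-\mu+\lat{L}$. Averaging yields $\ch{\epfirr{\mu}{r}}=\tfrac{1}{2}(\ch{\epfirr{\mu}{r}}+\ch{\epfirr{-\mu}{r}})$, and now the $s$-th bracketed sum contains four families of Gaussians which, by rewriting each squared argument as $x^2=(-x)^2$, regroup into two pieces each matching (eq:DefA): the summands $q^{-(\mu+sk-2w(\ell+m))^2/4k}-q^{-(\mu+sk+2w(\ell+m+1))^2/4k}$ are $A_{\mu+sk}(q)$, and the remaining two summands are $A_{-\mu+sk}(q)$. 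Hence the bracketed inner sum equals $\tfrac{1}{2}(A_{\mu+sk}(q)+A_{-\mu+sk}(q))$.

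The final step is to apply Lemma \ref{partialTheta} to each $A$ and then use the affine Weyl symmetries $\fjth{-\mu+sk+\lat{L}}{q}=\fjth{\mu-sk+\lat{L}}{q}$ and $\fdjth{-\mu+sk+\lat{L}}{q}=-\fdjth{\mu-sk+\lat{L}}{q}$ to fold the $-\mu$ terms into the announced form. The derivative contributions immediately give the difference $\fdjth{\mu+sk+\lat{L}}{q}-\fdjth{\mu-sk+\lat{L}}{q}$, while the coefficients of the undifferentiated theta functions reduce to $\tfrac{1}{2}\bigl(1+\tfrac{\mu+sk}{w}\bigr)$ and $\tfrac{1}{2}\bigl(1+\tfrac{-\mu+sk}{w}\bigr)$; using $w=-vk$ (so that $sk=-sw/v$) these rearrange to $\tfrac{\mu-(v-s)k}{2w}$ and $-\tfrac{\mu+(v-s)k}{2w}$, respectively, matching the proposition. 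The main obstacle I anticipate is making the reshuffling of the conditionally convergent double sum into two $A_\lambda$'s completely rigorous; this is exactly where the symmetrization is needed, and once in place the regrouping is justified by the absolute convergence already established for each $A_\lambda$ in the proof of Lemma \ref{partialTheta}.
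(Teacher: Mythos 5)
Your proposal is correct and follows essentially the same route as the paper: the paper likewise invokes the $\mu\mapsto-\mu$ invariance of $\ch{\epfirr{\mu}{r}}$ (via \cref{rem:pfirraltchar,prop:epfdecomp}) to symmetrize the conditionally convergent double sum into $\tfrac{1}{2}\brac*{A_{\mu+sk}(q)+A_{-\mu+sk}(q)}$, applies \cref{partialTheta}, and folds the $-\mu$ terms using the affine Weyl (anti)symmetry of $\jth{}$ and $\djth{}$. Your coefficient arithmetic $\tfrac{1}{2}\brac[\big]{1+\tfrac{\pm\mu+sk}{w}}=\pm\tfrac{\mu\mp(v-s)k}{2w}$ checks out, and your observation that the symmetrization is precisely what legitimizes the regrouping is the same point the paper makes.
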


\begin{remark} \label{rem:D=E+L}
	\cref{eq:pfdischar,prop:epfdecomp} imply that the characters of the remaining atypical irreducible $\epfvoa{k}$-modules are given by
	\begin{equation}
		\fch{\epfdis{\mu}{r,s}}{q} = \sum_{s'=s+1}^{v-1} (-1)^{s'-s-1} \fch{\epfrel{\mu-(s'-s)k}{r,s'}}{q} + (-1)^{v-1-s} \fch{\epfirr{\mu-(v-s)k}{u-r}}{q}.
	\end{equation}
	We shall not try to simplify this expression.  As the standard $\epfvoa{k}$-characters close on themselves under modular transformations, those of the $\sldis{}$-type $\epfvoa{k}$-characters will transform as a linear combination of standard and $\slirr{}$-type $\epfvoa{k}$-characters if the $\slirr{}$-type $\epfvoa{k}$-characters do.  We therefore only need to demonstrate this result for $\slirr{}$-type $\epfvoa{k}$-characters.
\end{remark}

\subsection{Interlude: linear dependences}

Now that we have the characters of the $\epfirr{\mu}{r}$ in terms of theta functions and their derivatives, it is in principle straightforward to determine their modular S-transforms.  To this end, we shall analyse certain linear dependences that arise in the terms that appear in these characters.

We first note that we may restrict attention to the terms that involve the theta function derivatives in the character formula \eqref{eq:epfirrchar=theta} --- the other terms may be expressed in terms of standard $\epfvoa{k}$-characters and these S-transform into one another, by \cref{prop:epfstcharmod}.  We isolate these terms in the following definition:
\begin{equation} \label{eq:DefGamma}
	\fepfone{\mu}{r}{q} = \sum_{s=1}^{v-1} (-1)^{s-1} \frac{\vch{u,v}{r,s}{q}}{\eta(q)} \sqbrac*{\fdjth{\mu + sk + \lat{L}}{q} - \fdjth{\mu - sk + \lat{L}}{q}}, \qquad \mu = r-1 \pmod{\rlat}.
\end{equation}
Note that these terms constitute the part of the atypical module characters of modular weight $1$.  The remaining part has modular weight $0$ so there can be no (non-trivial) linear dependences between the parts.

\begin{lemma} \label{ObviousRedundancies}
Given $r=1,\dots,u-1$ and $\mu = r-1 \pmod{\rlat}$, we have
\begin{equation} \label{eq:Wt1Symmetries}
	\epfone{\mu}{r} = \epfone{\mu+2w}{r}, \quad
	\epfone{\mu}{r} = \epfone{-\mu}{r}, \quad
	\epfone{\mu}{r} = (-1)^{v-1} \epfone{w+\mu}{u-r}, \quad
	\epfone{\mu}{r} = (-1)^{v-1} \epfone{w-\mu}{u-r},
\end{equation}
where we recall that $w=-vk=2v-u$.
\end{lemma}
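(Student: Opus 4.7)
The plan is to check each of the four identities in turn, exploiting only three ingredients: the $\lat{L}$-periodicity and Weyl-antisymmetry of $\djth{\mu+\lat{L}}$, the Kac-table symmetry $\vch{u,v}{u-r,s}{q} = \vch{u,v}{r,v-s}{q}$ of Virasoro minimal-model characters, and the defining identity $w = -vk$.

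For the first relation, I note that $\lat{L} = w\rlat = 2w\ZZ$, so $2w \in \lat{L}$ and the arguments $\mu \pm sk + \lat{L}$ appearing in \eqref{eq:DefGamma} are unaffected by the shift $\mu \mapsto \mu + 2w$. The second relation is immediate from the Weyl-antisymmetry $\fdjth{-\nu+\lat{L}}{q} = -\fdjth{\nu+\lat{L}}{q}$: substituting $\mu \mapsto -\mu$ and applying this to both derivative terms precisely exchanges the roles of $\fdjth{\mu+sk+\lat{L}}{q}$ and $\fdjth{\mu-sk+\lat{L}}{q}$, yielding an overall sign that cancels the minus sign already present in the bracket.

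The third relation is the only one requiring real work. Starting from $\epfone{w+\mu}{u-r}$, I first apply the Kac-table symmetry $\vch{u,v}{u-r,s}{q} = \vch{u,v}{r,v-s}{q}$ and change the summation variable from $s$ to $s' = v - s$. This produces a sign factor $(-1)^{v-s'-1} = (-1)^{v-1}(-1)^{s'-1}(-1)$, and the arguments of the two theta-derivatives become $w + \mu + (v-s')k + \lat{L}$ and $w + \mu - (v-s')k + \lat{L}$. Using $w + vk = 0$ and $w - vk = 2w \in \lat{L}$, together with $\lat{L}$-periodicity of $\djth{\bullet+\lat{L}}$, these simplify to $\mu - s'k + \lat{L}$ and $\mu + s'k + \lat{L}$ respectively, which is precisely the bracket of \eqref{eq:DefGamma} with its two terms swapped; this swap contributes an additional sign that combines with the earlier one to leave $(-1)^{v-1}$. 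One small bookkeeping point to verify is that the congruence condition $\mu = r-1 \pmod{\rlat}$ is compatible with $w+\mu = (u-r)-1 \pmod{\rlat}$, which holds because $w + r - (u-r) = 2(v-u+r) \in \rlat$.

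The fourth relation is then a one-line consequence of the second and third: $\epfone{\mu}{r} = \epfone{-\mu}{r} = (-1)^{v-1}\epfone{w-\mu}{u-r}$. No single step is genuinely hard, but the main bookkeeping obstacle is keeping the signs, the swap of the two derivative terms in the bracket, and the periodicity reductions $w \pm vk \in \lat{L}$ straight in the derivation of the third identity.
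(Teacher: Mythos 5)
Your proof is correct and follows essentially the same route as the paper's: the first two identities from the $\lat{L}$-periodicity and Weyl-antisymmetry of $\djth{\bullet+\lat{L}}$, the third from the Kac symmetry $\vch{u,v}{u-r,s}{q}=\vch{u,v}{r,v-s}{q}$ combined with the reindexing $s\mapsto v-s$ and the reductions $w\pm vk\in\lat{L}\cup\{0\}$, and the fourth by composing the second and third. The only cosmetic difference is that you run the third computation from $\epfone{w+\mu}{u-r}$ back to $\epfone{\mu}{r}$ rather than forwards, and your sign bookkeeping and the parity check $w+\mu=u-r-1\pmod{\rlat}$ both check out.
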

\begin{proof}
The first two identities reflect the affine Weyl-antisymmetry of the theta function derivatives.  The third uses, in addition, the Kac symmetry of the Virasoro minimal model characters:
\begin{align}
	\fepfone{\mu}{r}{q} &= \sum_{v=1}^{s-1} (-1)^{s-1} \frac{\vch{u,v}{r,s}{q}}{\eta(q)} \sqbrac*{\fdjth{\mu + sk + \lat{L}}{q} - \fdjth{\mu - sk + \lat{L}}{q}} \notag \\
	&= \sum_{v=1}^{s-1} (-1)^{v-s-1} \frac{\vch{u,v}{r,v-s}{q}}{\eta(q)} \sqbrac*{\fdjth{\mu + (v-s)k + \lat{L}}{q} - \fdjth{\mu - (v-s)k + \lat{L}}{q}} \notag \\
	&= (-1)^{v-1} \sum_{v=1}^{s-1} (-1)^{s-1} \frac{\vch{u,v}{u-r,s}{q}}{\eta(q)} \sqbrac*{\fdjth{\mu + w+sk + \lat{L}}{q} - \fdjth{\mu + w-sk + \lat{L}}{q}}.
\end{align}
Note that $\mu = r-1 \pmod{\rlat}$ implies that $\mu + w = r-1+2v-u = u-r-1 \pmod{\rlat}$, as required.  The fourth is obtained by combining the second and third.
\end{proof}

The first two relations of \eqref{eq:Wt1Symmetries} allow us to restrict $\mu$ to a fundamental domain of the affine Weyl group $\ZZ_2 \ltimes 2w \ZZ$.  We choose $0 \le \mu \le w$, remembering that $\mu$ must match $r-1$ in parity.  Next, the fourth relation states that we can always exchange $r$ for $u-r$, hence we may impose $r \le \frac{u}{2}$.  If $u$ is odd, then we are done.  If $u$ is even, however, then we can slightly refine the analysis by noting that when $r = \frac{u}{2}$, the fourth relation lets us exchange $\mu$ for $w-\mu$, hence we may insist that $\mu \le \frac{w}{2}$ in this case (note that $u$ even implies that $w$ is also even).

Consider the vector space $\vvmf{k}$ spanned by the $\epfone{\mu}{r}$, with $r=1,\dots,u-1$ and $\mu = r-1 \pmod{\rlat}$.  The assertions above allow us to significantly reduce this spanning set.
\begin{proposition} \label{NotABasis}
	A spanning set of $\vvmf{k}$ is given by the elements $\epfone{\mu}{r}$ with
	\begin{center}
		\setlength{\tabcolsep}{1em}
		\renewcommand{\arraystretch}{1.3}
		\begin{tabular}{>{\bfseries}rLLl}
			$u$ odd: & \mu = 0, 1, \dots, w, & r = 1, 2, \dots, \tfrac{1}{2} (u-1), \\
			\multirow{2}{*}{$u$ even:} & \mu = 0, 1, \dots, w, & r = 1, 2, \dots, \tfrac{1}{2} u - 1 & and \\
			                           & \mu = 0, 1, \dots, \frac{w}{2}, & r = \tfrac{1}{2} u,
		\end{tabular}
	\end{center}
	subject to $\mu = r-1 \pmod{\rlat}$.
\end{proposition}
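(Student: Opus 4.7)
The plan is to systematically apply the four identities from Lemma~\ref{ObviousRedundancies} to reduce an arbitrary generator $\epfone{\mu}{r}$ to one in the claimed list. Given any admissible pair $(\mu,r)$ with $r \in \{1,\dots,u-1\}$ and $\mu \equiv r-1 \pmod{2}$, I would first use the periodicity $\epfone{\mu}{r} = \epfone{\mu+2w}{r}$ together with the reflection $\epfone{\mu}{r} = \epfone{-\mu}{r}$ to move $\mu$ into a fundamental domain for the affine Weyl group $\ZZ_2 \ltimes 2w\ZZ$, namely $\{0,1,\dots,w\}$. Since both operations act trivially on the parity of $\mu$, the constraint $\mu \equiv r-1 \pmod 2$ is preserved throughout.

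Next I would address the range of $r$. If $r > u/2$, the fourth identity $\epfone{\mu}{r} = (-1)^{v-1}\epfone{w-\mu}{u-r}$ exchanges $r \leftrightarrow u-r$ and simultaneously sends $\mu \mapsto w-\mu$, which still lies in $\{0,\dots,w\}$; one verifies that the parity constraint survives because $w = 2v-u \equiv u \pmod 2$. This brings $r$ into the range $\{1,\dots,\lfloor u/2\rfloor\}$. In the $u$ odd case we obtain precisely the list in the first row of the table, completing the argument.

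When $u$ is even, the reduction above handles all $r < u/2$, leaving the self-conjugate index $r = u/2$ to treat separately. At this value relation~(4) specialises to $\epfone{\mu}{u/2} = (-1)^{v-1}\epfone{w-\mu}{u/2}$, so $\mu$ and $w-\mu$ yield proportional elements; noting that $u$ even forces $w$ even, we can further restrict $\mu \in \{0,1,\dots,w/2\}$ for this specific row, matching the second row of the table. The only real bookkeeping is verifying the parity constraint after each swap, which is immediate from $w \equiv u \pmod 2$ and $\mu \equiv r-1 \pmod 2$.

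The argument has no genuine obstacle: it is a finite, purely algebraic reduction driven by the four identities of Lemma~\ref{ObviousRedundancies}. Since the proposition only asserts that the listed elements \emph{span} $\vvmf{k}$ (and not that they are linearly independent), no additional analysis of hidden relations or counting of dimension is required. I would therefore expect the written-up proof to consist of little more than a case split on the parity of $u$ together with the parity check on the symmetry swap.
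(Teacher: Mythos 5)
Your proposal is correct and follows essentially the same route as the paper: the text preceding \cref{NotABasis} uses the first two identities of \eqref{eq:Wt1Symmetries} to restrict $\mu$ to the fundamental domain $0 \le \mu \le w$, the fourth identity to impose $r \le \frac{u}{2}$, and, for $u$ even and $r = \frac{u}{2}$, the fourth identity again to restrict to $\mu \le \frac{w}{2}$. Your explicit parity checks (via $w \equiv u \pmod 2$) are a welcome addition that the paper leaves implicit.
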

\noindent We let $\mathbf{B}_k$ denote the set of pairs $(\mu; r)$ satisfying these conditions.
\begin{cor} \label{DimVVMF}
  \(\dim_\CC \vvmf{k} \le \abs*{\mathbf{B}_k} =
  \begin{cases*}
		\frac{1}{4} (u-1)(w+1) & if $u$ is odd, \\
		\frac{1}{4} uw - \frac{1}{2} (v-1-u) & if $u$ is even.
	\end{cases*}
	\)
\end{cor}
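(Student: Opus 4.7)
The plan is to view this purely as a combinatorial counting problem, since Proposition \ref{NotABasis} already supplies a spanning set of size $|\mathbf{B}_k|$; the inequality $\dim_\CC \vvmf{k} \le |\mathbf{B}_k|$ is then automatic, and all that remains is to evaluate $|\mathbf{B}_k|$ in closed form. The only preliminary observation I need is that $w = 2v - u$ has the same parity as $u$. Coupled with the fact that $\gcd\{u,v\} = 1$ forces $v$ to be odd whenever $u$ is even, this fully controls the parities that appear in the count.

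For the case $u$ odd, I would argue as follows. Since $w$ is then also odd, the set $\{0, 1, \dots, w\}$ has $w+1$ (even) elements and splits evenly between the two parities, giving $(w+1)/2$ choices for $\mu$ of each parity. For each $r \in \{1, \dots, (u-1)/2\}$ the parity constraint $\mu \equiv r-1 \pmod{\rlat}$ singles out one of these halves, so the per-$r$ contribution is always $(w+1)/2$. Summing over the $(u-1)/2$ admissible values of $r$ yields
\begin{equation*}
|\mathbf{B}_k| = \tfrac{u-1}{2}\cdot\tfrac{w+1}{2} = \tfrac{1}{4}(u-1)(w+1),
\end{equation*}
as asserted.

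For the case $u$ even, $w$ is even too, and the split of $\{0,1,\dots,w\}$ is into $w/2+1$ even values and $w/2$ odd values. The contribution from $r \in \{1, \dots, \tfrac{u}{2}-1\}$ is obtained by counting how many of these $r$ are odd (forcing $\mu$ even) and how many are even (forcing $\mu$ odd), which naturally separates into the subcases $u \equiv 0 \pmod 4$ and $u \equiv 2 \pmod 4$. The contribution from $r = u/2$ is handled separately: $\mu$ ranges over $\{0, 1, \dots, w/2\}$ with parity equal to that of $\tfrac{u}{2}-1$, and one uses that $w/2 = v - u/2$ has parity opposite to that of $u/2$ (since $v$ is odd). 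Adding the two contributions in each of the two subcases, one collects terms and verifies that both simplify to the same expression
\begin{equation*}
|\mathbf{B}_k| = \tfrac{uw}{4} - \tfrac{v-1-u}{2},
\end{equation*}
matching the stated formula.

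No genuine obstacle is anticipated: every step is a transparent parity count. The mildly delicate part is simply the bookkeeping in the $u$ even case, where the two subcases $u \equiv 0, 2 \pmod 4$ must be treated in parallel and shown to collapse to the same closed form. I would present the two cases in sequence and flag the coincidence of the two subcase totals as the arithmetic identity that makes the unified formula possible.
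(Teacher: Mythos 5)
Your proposal is correct: the paper offers no explicit proof of this corollary, treating it as an immediate count of the spanning set from \cref{NotABasis}, and your parity bookkeeping (including the key observations that $w=2v-u$ shares the parity of $u$, that $v$ is odd when $u$ is even, and that $w/2$ then has parity opposite to $u/2$) is exactly the intended argument and checks out in both subcases $u\equiv 0,2\pmod 4$. Nothing further is needed.
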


\begin{remark}
	We believe that the spanning set $\set{\epfone{\mu}{r} \st (\mu; r) \in \mathbf{B}_k}$ of \cref{NotABasis} is actually a basis, hence that the upper bound of \cref{DimVVMF} is actually an equality.  However, we have not tried to prove this as it is not needed for the modularity result that follows.
\end{remark}

\subsection{Modularity of atypical $\epfvoa{k}$-characters}

We turn now to the modular properties of the weight $1$ terms $\epfone{\mu}{r}$ of the characters of the $\epfirr{\mu}{r}$, first determining the S-transform of the theta function derivatives
\begin{equation}
	\fdjth{\mu + \lat{L}}{\zeta \mid \tau} = -\frac{\partial_{\zeta}}{2 \pi \ii \cdot 2w} \fjth{\mu + \lat{L}}{\zeta \mid \tau}.
\end{equation}
This is easily derived by differentiating \cref{eq:ThetaS} with respect to $\zeta$, resulting in
\begin{equation} \label{eq:Theta'S}
	\fdjth{\mu + \lat{L}}{\zeta/\tau \mid -1/\tau} = \frac{\sqrt{-\ii \tau}}{\sqrt{\abs*{\lat{L}' / \lat{L}}}} \ee^{-2 \pi \ii k \zeta^2 / \tau} \sum_{\lambda \in \lat{L}' / \lat{L}} \ee^{\ii \pi \lambda \mu / k} \sqbrac*{\tau \fdjth{\lambda + \lat{L}}{\zeta \mid \tau} - \frac{\zeta}{v} \fjth{\lambda + \lat{L}}{\zeta \mid \tau}}.
\end{equation}
Specialising to $\zeta = 0$ and $\mu = m/v$ therefore gives
\begin{equation} \label{eq:Theta'S'}
	\fdjth{m/v + \lat{L}}{-1/\tau} = \frac{\tau \sqrt{-\ii \tau}}{\sqrt{2p}} \sum_{\ell=0}^{2p-1} \ee^{-\ii \pi \ell m/p} \fdjth{\ell/v + \lat{L}}{\tau} = (-\ii \tau)^{3/2} \sum_{\ell=1}^{p-1} \Stheta{m \ell} \fdjth{\ell/v + \lat{L}}{\tau},
\end{equation}
where we recall that $p = vw = v(2v-u)$ and have set
\begin{equation} \label{eq:DefSone}
	\Stheta{m \ell} = \sqrt{\frac{2}{p}} \sin \frac{\pi \ell m}{p}.
\end{equation}
Because $\fdjth{\ell/v + \lat{L}}{\tau} = 0$, for $\ell = 0$ or $p$, it is natural to restrict the range of $\ell$ to $1, \dots, p-1$.  However, the S-matrix entries $\Stheta{m \ell}$ are $0$ for both $\ell = 0$ and $\ell = p$, so these values may be included in the summation range when convenient.

\begin{theorem} \label{thm:atypicalmodularity}
	Given \cref{ass:cat,ass:fusion}, the elements of $\vvmf{k}$ define a finite-dimensional vector-valued modular form with
	\begin{subequations}
		\begin{equation}
			\fepfone{\mu}{r}{-1/\tau} = -\ii \tau \sum_{(\mu'; r') \in \mathbf{B}_k} \Sone{(\mu; r) (\mu'; r')} \fepfone{\mu'}{r'}{\tau}, \qquad
			\fepfone{\mu}{r}{\tau+1} = \ee^{2 \pi \ii (\pfirrdim{\mu}{r} - \tilde{c} / 24)} \fepfone{\mu}{r}{\tau},
		\end{equation}
		where $\pfirrdim{\mu}{r}$ was given in \eqref{eq:pfirrconfwts}, $\tilde{c} = 2 - \frac{6}{t}$, $\mathbf{B}_k$ was given in \cref{NotABasis} and
		\begin{equation}
			\Sone{(\mu; r) (\mu'; r')} = \frac{2A_{\mu'; r'}}{\sqrt{uw}} \sin \frac{\pi r r'}{t} \cos \frac{\pi \mu \mu'}{k}, \qquad
			A_{\mu'; r'} =
			\begin{cases*}
				\frac{1}{2} & if $r' = \frac{u}{2}$ and $\mu' \in w \ZZ$, \\
				2 & if $r' \neq \frac{u}{2}$ and $\mu' \notin w \ZZ$, \\
				1 & otherwise.
			\end{cases*}
		\end{equation}
	\end{subequations}
\end{theorem}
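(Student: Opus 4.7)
The strategy is to split the theorem into the $T$- and $S$-transformations, treating $T$ first since it is essentially automatic from the character interpretation. Using Proposition \ref{prop:epfirrchar=theta} together with Proposition \ref{prop:epfstchars} and the identity \eqref{eq:Theta'Identity}, write $\fch{\epfirr{\mu}{r}}{\tau} = \fepfone{\mu}{r}{\tau} + R_{\mu;r}(\tau)$, where $R_{\mu;r}$ is a linear combination of standard $\epfvoa{k}$-characters (coming from the non-derivative theta terms in \eqref{eq:epfirrchar=theta}). The full character is a $\tilde L_0$-eigenvector shifted by $-\tilde c/24$, hence a $T$-eigenvector with the scalar $\ee^{2\pi\ii(\pfirrdim{\mu}{r} - \tilde c/24)}$. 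Because $T$ preserves modular weight and $R_{\mu;r}$ is of pure weight $0$ while $\fepfone{\mu}{r}$ is of pure weight $1$, each summand must individually be a $T$-eigenvector with this same eigenvalue, giving the $T$-formula.

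For the $S$-formula, substitute the three ingredients into the definition \eqref{eq:DefGamma}: the Virasoro $S$-matrix \eqref{eq:DefSvir}, the identity $\eta(-1/\tau) = \sqrt{-\ii\tau}\,\eta(\tau)$, and the derivative-theta $S$-transform \eqref{eq:Theta'S'}. The three $\sqrt{-\ii\tau}$ factors combine to give the expected weight-one automorphy factor $-\ii\tau$. The first key simplification is to collect the two theta-derivative terms: writing $m_\pm = v\mu \pm sw$ and using $p = vw$ together with $\sin A - \sin B = 2\cos\tfrac{A+B}{2}\sin\tfrac{A-B}{2}$, one obtains
\begin{equation*}
	\Stheta{m_- \ell} - \Stheta{m_+ \ell} = -\frac{2\sqrt{2}}{\sqrt p}\cos\frac{\pi\ell\mu}{w}\sin\frac{\pi\ell s}{v}.
\end{equation*}
After setting $\mu' = \ell/v$ and recalling $k = -w/v$, this cosine is precisely $\cos(\pi\mu\mu'/k)$, reproducing the desired factor in $\Sone{(\mu;r)(\mu';r')}$. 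The Virasoro $(r,r')$-dependence peels off immediately as $\sin(v\pi rr'/u) = \sin(\pi rr'/t)$, and the remaining work is the evaluation of a double sum over $s,s' \in \{1,\dots,v-1\}$ whose integrand involves $(-1)^{s-1}(-1)^{rs'+r's}\sin(u\pi ss'/v)\sin(\pi\ell s/v)$. Applying product-to-sum, reducing $us' \pmod{2v}$, and invoking the orthogonality $\sum_{s=1}^{v-1}\sin(\pi as/v)\sin(\pi bs/v) = \tfrac v2\delta_{a,b}$ for $a,b \in \{1,\dots,v-1\}$, the $s$-sum pins $\ell \equiv \pm us' \pmod{2v}$, collapsing the triple sum to a single contribution for each $(\mu';r')$.

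The resulting formula is a priori indexed by a set larger than $\mathbf{B}_k$, so the final step is to fold via Lemma \ref{ObviousRedundancies}: the involutions $(\mu';r') \mapsto (-\mu';r')$ and $(\mu';r') \mapsto (w\pm\mu';u-r')$ are used to restrict the summation to $\mathbf{B}_k$, with generic four-element orbits contributing multiplicity $2$ and boundary fixed points contributing multiplicity $\tfrac12$; this is precisely the origin of the factor $A_{\mu';r'}$. Finite-dimensionality then follows from Corollary \ref{DimVVMF}. The main obstacle will be the careful bookkeeping of signs throughout the $S$-computation: the $(-1)^{s-1}$ prefactor, the Virasoro phase $(-1)^{rs'+r's}$, the overall minus sign from the Virasoro $S$-matrix and from the sine-difference identity, the $(-1)^{v-1}$ arising from Kac symmetry when applying Lemma \ref{ObviousRedundancies}, and the signs picked up when reducing $us' \pmod{2v}$ back into $\{1,\dots,v-1\}$, must all conspire to yield the positive-signed formula in the statement. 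A case analysis on the parity of $u$ (and hence of $w$) and on the location of $(\mu';r')$ relative to the boundary of $\mathbf{B}_k$ is likely unavoidable.
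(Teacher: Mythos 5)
Your proposal is correct and follows essentially the same route as the paper: the T-transform is read off from the conformal weights, and the S-transform is obtained by substituting the Virasoro, eta and derivative-theta S-matrices into \eqref{eq:DefGamma}, evaluating the $s$-sum by sine orthogonality to pin $\ell/v$ to $\pm\lambda_{r',s'} \pmod{\rlat}$, and then folding onto $\mathbf{B}_k$ via \cref{ObviousRedundancies} (with the parity-of-$u$ and $r'=\tfrac{u}{2}$ case analysis producing the factors $A_{\mu';r'}$), exactly as in the paper. The only cosmetic difference is that you combine the two derivative-theta terms into a $\cos\sin$ product before performing the $s$-sum, whereas the paper keeps the complex-exponential form of \eqref{eq:Theta'S'} and recombines after; the sign bookkeeping you flag is indeed where the paper spends most of its effort.
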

\begin{proof}
	The behaviour under the modular T-transform $\tau \mapsto \tau + 1$ follows immediately from the relation between $\epfone{\mu}{r}$ and the character of $\epfirr{\mu}{r}$, noting that the conformal weights of the latter match those of $\pfirr{\mu}{r}$ (up to an integer) which were given in \cref{prop:pfconfwts}.  We therefore turn to the S-transform.

	From the definition \eqref{eq:DefGamma} of the weight $1$ parts of the atypical characters, we deduce that
	\begin{equation}
		\fepfone{\mu}{r}{-1/\tau} = -\ii \tau \sum_{s=1}^{v-1} (-1)^{s-1} \frac{\sum_{(r',s')} \Svir{u,v}{r,s}{r',s'} \vch{u,v}{r',s'}{\tau}}{\eta(\tau)} \cdot \frac{-2}{\sqrt{2p}} \sum_{\ell=0}^{2p-1} \ee^{-\ii \pi \ell \mu / w} \sin \frac{\pi \ell s}{v} \fdjth{\ell / v + \lat{L}}{\tau},
	\end{equation}
	where the modular S-matrix of $\virminmod{u,v}$ was given in \eqref{eq:DefSvir} and we have used the first equality of \eqref{eq:Theta'S'}.  The sum over $s$ is easily evaluated:
	\begin{align} \label{eq:TheSSum}
		\sum_{s=1}^{v-1} (-1)^{s-1} (-1)^{r's} \sin(\pi ts' s) \sin \frac{\pi \ell s}{v}
		&= \sum_{s=1}^{v-1} \cos(\pi (r'-1) s) \sin(-\pi ts' s) \sin \frac{\pi \ell s}{v} \notag \\
		&= \sum_{s=1}^{v-1} \sin(\pi \lambda_{r',s'} s) \sin \frac{\pi \ell s}{v} \notag \\
		&= \frac{1}{2} \sum_{s=1}^{v-1} \sqbrac*{\cos(\pi (\lambda_{r',s'} - \ell / v) s) - \cos(\pi (\lambda_{r',s'} + \ell / v) s)} \notag \\
		&=
		\begin{cases*}
			\pm\frac{v}{2} & if $\ell / v = \pm\lambda_{r',s'} \pmod{\rlat}$, \\
			0 & otherwise.
		\end{cases*}
	\end{align}
	We mention that the case $\ell / v = \lambda_{r',s'} \pmod{\rlat}$ and $\ell / v = -\lambda_{r',s'} \pmod{\rlat}$, which would give $0$ for this sum, does not occur because it would require that $2 \lambda_{r',s'} \in \rlat$, hence that $us' \in 2v \ZZ$, hence $s' \in v \ZZ$.

	The contribution to $\fepfone{\mu}{r}{-1/\tau}$ from the $\ell$ satisfying $\ell / v = \lambda_{r',s'} \pmod{\rlat}$ is therefore
	\begin{equation} \label{eq:Contribution}
		-\ii \tau \frac{2}{\sqrt{uw}} \underset{\mathclap{\ell / v = \lambda_{r',s'} \ (\mathrm{mod}\ \rlat)}}{\sum_{(r',s')} \sum_{\ell = 0}^{2p-1}} (-1)^{rs'} \sin \frac{\pi r r'}{t} \ee^{-\ii \pi \ell \mu / w} \frac{\vch{u,v}{r',s'}{\tau}}{\eta(\tau)} \fdjth{\ell / v + \lat{L}}{\tau}.
	\end{equation}
	As the summands are manifestly $2p$-periodic in $\ell$, the $\ell$-sum is over a full period --- this is the reason we use \eqref{eq:Theta'S'} above instead of \eqref{eq:DefSone}.  Writing $\ell / v = r'-1-ts' = r'-1-ks' \pmod{\rlat}$, we set $\mu' = \ell /v + ks'$ so that $\mu' = r'-1 \pmod{\rlat}$ and $\mu'$ is $2w$-periodic.  The contribution \eqref{eq:Contribution} now becomes
	\begin{equation} \label{eq:Contribution'}
		-\ii \tau \frac{-2}{\sqrt{uw}} \underset{\mathclap{\mu' = r'-1 \ (\mathrm{mod}\ \rlat)}}{\sum_{(r',s')} \sum_{\mu' = 0}^{2w-1}} (-1)^{s'-1} \sin \frac{\pi r r'}{t} \ee^{\ii \pi \mu \mu' / k} \frac{\vch{u,v}{r',s'}{\tau}}{\eta(\tau)} \fdjth{\mu' - s'k + \lat{L}}{\tau},
	\end{equation}
	where we have noted that $(-1)^{rs'} \ee^{-\ii \pi s' \mu} = (-1)^{(\mu - r) s'} = -(-1)^{s'-1}$, since $\mu = r-1 \pmod{\rlat}$.

	For the contribution from the $\ell$ satisfying $\ell / v = -\lambda_{r',s'} \pmod{\rlat}$, we likewise write $\mu' = \ell /v - ks'$ so that again $\mu' = -r'+1 = r'-1 \pmod{\rlat}$ and $\mu'$ is $2w$-periodic.  This contribution now evaluates to the same form as \eqref{eq:Contribution'} but multiplied by $-1$, because of the sign in \eqref{eq:TheSSum}, and with $\djth{\mu' - s'k + \lat{L}}$ replaced by $\djth{\mu' + s'k + \lat{L}}$.  We therefore conclude that
	\begin{equation} \label{eq:NearlyThere}
		\fepfone{\mu}{r}{-1/\tau} = -\ii \tau \frac{2}{\sqrt{uw}} \underset{\mathclap{\mu' = r'-1 \ (\mathrm{mod}\ \rlat)}}{\sum_{(r',s')} \sum_{\mu' = 0}^{2w-1}} (-1)^{s'-1} \sin \frac{\pi r r'}{t} \ee^{\ii \pi \mu \mu' / k} \frac{\vch{u,v}{r',s'}{\tau}}{\eta(\tau)} \sqbrac*{\fdjth{\mu' + s'k + \lat{L}}{\tau} - \fdjth{\mu' - s'k + \lat{L}}{\tau}}.
	\end{equation}

	Suppose now that $u$ is odd.  Then, the $(r',s')$-sum over the Virasoro Kac table $\kactable{u,v}$ may be expressed as a double sum where $r'$ ranges from $1$ to $\frac{1}{2}(u-1)$ and $s'$ ranges from $1$ to $v-1$.  In this case,
	\begin{equation}
		\fepfone{\mu}{r}{-1/\tau} = -\ii \tau \frac{2}{\sqrt{uw}} \underset{\mathclap{\mu' = r'-1 \ (\mathrm{mod}\ \rlat)}}{\sum_{r'=1}^{\frac{1}{2}(u-1)} \sum_{\mu' = 0}^{2w-1}} \sin \frac{\pi r r'}{t} \ee^{\ii \pi \mu \mu' / k} \fepfone{\mu'}{r'}{\tau}.
	\end{equation}
	We can write this as a sum over the spanning set $\mathbf{B}_k$ of \cref{NotABasis} by combining the terms with $\mu \neq 0, w$ using $\epfone{\mu'}{r'} = \epfone{2w-\mu'}{r'}$, which follows from the first two identities of \eqref{eq:Wt1Symmetries}.  This gives the values of $A_{\mu'; r'}$ and the modular S-transform of the \lcnamecref{thm:atypicalmodularity} for $u$ odd.

	If $u$ is even, so $v$ is odd, then the $(r',s')$-sum may be expressed as the sum of two contributions, the first being a double sum with $r'$ ranging from $1$ to $\frac{u}{2}-1$ and $s'$ ranging from $1$ to $v-1$ while the second is a single sum with $r'$ fixed at $\frac{u}{2}$ and $s'$ ranging from $1$ to $\frac{v-1}{2}$.  The contribution from $r' < \frac{u}{2}$ is analysed as in the $u$ odd case with the same result (the upper limit of the $r'$-sum is now $\frac{u}{2}-1$).

	The analysis of the contribution from $r' = \frac{u}{2}$ is a little more intricate.  First, note that if $r$ is even, then this contribution vanishes because $\sin(\pi r r' t^{-1}) = 0$.  We may therefore assume that $r$ is odd, hence that $\mu$ is even.  We now compare the $r' = \frac{u}{2}$ contribution to \eqref{eq:NearlyThere} from a given $s'$ to that from $v-s'$.  The latter is
	\begin{align}
		&-\ii \tau \frac{2}{\sqrt{uw}} \underset{\mathclap{\mu' = u/2-1 \ (\mathrm{mod}\ \rlat)}}{\sum_{\mu' = 0}^{2w-1}} (-1)^{v-s'-1} \sin \frac{\pi r v}{2} \ee^{\ii \pi \mu \mu' / k} \frac{\vch{u,v}{u/2,v-s'}{\tau}}{\eta(\tau)} \sqbrac*{\fdjth{\mu' + (v-s')k + \lat{L}}{\tau} - \fdjth{\mu' - (v-s')k + \lat{L}}{\tau}} \notag \\
		&\mspace{20mu} = -\ii \tau \frac{2}{\sqrt{uw}} \underset{\mathclap{\mu' = u/2-1 \ (\mathrm{mod}\ \rlat)}}{\sum_{\mu' = 0}^{2w-1}} (-1)^{s'-1} \sin \frac{\pi r v}{2} \ee^{\ii \pi \mu \mu' / k} \frac{\vch{u,v}{u/2,s'}{\tau}}{\eta(\tau)} \sqbrac*{\fdjth{\mu' + w + s'k + \lat{L}}{\tau} - \fdjth{\mu' + w - s'k + \lat{L}}{\tau}} \notag \\
		&\mspace{20mu} =-\ii \tau \frac{2}{\sqrt{uw}} \underset{\mathclap{\mu' = u/2-1 \ (\mathrm{mod}\ \rlat)}}{\sum_{\mu' = 0}^{2w-1}} (-1)^{s'-1} \sin \frac{\pi r v}{2} \ee^{\ii \pi \mu \mu' / k} \frac{\vch{u,v}{u/2,s'}{\tau}}{\eta(\tau)} \sqbrac*{\fdjth{\mu' + s'k + \lat{L}}{\tau} - \fdjth{\mu' - s'k + \lat{L}}{\tau}},
	\end{align}
	where we have used the following facts: $v$ is odd, $w$ and $\mu$ are even, the theta function derivatives are $2w$-periodic, and the $\mu'$-sum is over a full period.  The contributions from $s'$ and $v-s'$ therefore coincide for $r$ odd and $r' = \frac{u}{2}$.  The $r' = \frac{u}{2}$ contribution to $\fepfone{\mu}{r}{-1/\tau}$ is then half that obtained by summing $s'$ from $1$ to $v-1$.  It therefore has exactly the same form as the generic case $r' \neq \frac{u}{2}$ analysed above except for the additional factor of $\frac{1}{2}$.
\end{proof}

An obvious consequence of this result is that the vector space spanned by the $\epfone{\mu}{r}$ and $-\ii \tau \epfone{\mu}{r}$ carries a representation of the modular group.  Recasting the character formula of \cref{prop:epfirrchar=theta} in the form
\begin{equation}
	\ch{\epfirr{\mu}{r}} = \epfone{\mu}{r} + \sum_{s=1}^{v-1} \brac*{\frac{\mu - (v-s)k}{2w} \ch{\epfrel{\mu+sk}{r,s}} - \frac{\mu + (v-s)k}{2w} \ch{\epfrel{\mu-sk}{r,s}}}
\end{equation}
and recalling \cref{rem:D=E+L}, we conclude that the direct sum of this vector space and the span of the standard characters $\ch{\epfrel{\mu}{r,s}}$, with $\mu \in \lat{L}' / \lat{L}$ and $(r,s) \in \kactable{u,v}$, contains the characters of all the irreducible $\epfvoa{k}$-modules and carries a $\SLG{PSL}{2;\ZZ}$-representation.

\begin{corollary}
	Given \cref{ass:cat,ass:fusion}, it follows that the characters of the irreducible $\epfvoa{k}$-modules are modular: they generate a representation of $\SLG{PSL}{2;\ZZ}$ of dimension $p(u-1)(v-1) + 2 \dim \vvmf{k} < \infty$.
\end{corollary}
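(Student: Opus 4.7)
The plan is to exhibit a finite-dimensional $\SLG{PSL}{2;\ZZ}$-stable subspace $W$ of holomorphic functions on the upper half-plane that contains every irreducible $\epfvoa{k}$-character, and then bound $\dim_{\CC} W$. Concretely, I would take $W$ to be the $\CC$-span of three families of functions of $\tau$: (i) the standard characters $\ch{\epfrel{\mu}{r,s}}$ for $\mu \in \lat{L}'/\lat{L}$ and $(r,s)\in\kactable{u,v}$; (ii) the weight-$1$ pieces $\epfone{\mu}{r}$ defined in \eqref{eq:DefGamma}; and (iii) the products $-\ii\tau\,\epfone{\mu}{r}$ for the same indices.

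First I would check that every irreducible $\epfvoa{k}$-character lies in $W$. The typical irreducibles have characters of type (i) by \cref{prop:epfstchars}. By \cref{prop:epfirrchar=theta}, each $\ch{\epfirr{\mu}{r}}$ equals $\epfone{\mu}{r}$ plus an explicit $\QQ$-linear combination of standard characters $\ch{\epfrel{\mu\pm sk}{r,s}}$, and each $\ch{\epfdis{\mu}{r,s}}$ is in turn a combination of standards and $\slirr{}$-type atypical characters by \cref{rem:D=E+L}. Consequently the span of all irreducible characters is contained in $W$.

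Next I would verify $\SLG{PSL}{2;\ZZ}$-stability of $W$. Under $S$, family (i) maps into itself by \cref{prop:epfstcharmod}, while \cref{thm:atypicalmodularity} says the $S$-image of an element of (ii) is an element of (iii); applying the same transformation law once more,
\[
-\ii\bigl(-\tfrac{1}{\tau}\bigr)\,\epfone{\mu}{r}\bigl(-\tfrac{1}{\tau}\bigr)
= \tfrac{\ii}{\tau}\cdot(-\ii\tau)\sum_{(\mu';r')\in\mathbf{B}_k}\Sone{(\mu;r)(\mu';r')}\,\epfone{\mu'}{r'}(\tau)
= \sum_{(\mu';r')\in\mathbf{B}_k}\Sone{(\mu;r)(\mu';r')}\,\epfone{\mu'}{r'}(\tau),
\]
so the $S$-image of (iii) lies in (ii). Under $T$, family (i) is acted on by a diagonal phase, family (ii) likewise by \cref{thm:atypicalmodularity}, and for (iii) the identity
\[
-\ii(\tau+1)\,\epfone{\mu}{r}(\tau+1)
= \ee^{2\pi\ii(\pfirrdim{\mu}{r}-\tilde{c}/24)}\bigl[\,(-\ii\tau)\,\epfone{\mu}{r}(\tau) + (-\ii)\,\epfone{\mu}{r}(\tau)\,\bigr]
\]
stays inside $\text{(ii)}+\text{(iii)}$. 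Hence $W$ is $\SLG{PSL}{2;\ZZ}$-invariant.

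Finally I would read off the dimension bound. Family (i) contributes at most $|\lat{L}'/\lat{L}|\cdot|\kactable{u,v}| = 2p \cdot \tfrac{1}{2}(u-1)(v-1) = p(u-1)(v-1)$ dimensions, while (ii) and (iii) contribute at most $\dim_{\CC}\vvmf{k}$ dimensions each, a finite number by \cref{DimVVMF}. Since the $\SLG{PSL}{2;\ZZ}$-representation generated by the irreducible characters sits inside $W$, its dimension is at most $p(u-1)(v-1)+2\dim_{\CC}\vvmf{k}$. The only mildly subtle point is the interplay of the explicit $-\ii\tau$ in family (iii) with the $-\ii\tau$ already appearing in the $S$-transform of $\epfone{\mu}{r}$: one must confirm that these factors cancel so no higher power of $\tau$ is ever generated, and the display above shows exactly that.
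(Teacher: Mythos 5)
Your proposal is correct and follows essentially the same route as the paper: the paper likewise forms the direct sum of the span of the standard characters with the space spanned by the $\epfone{\mu}{r}$ and $-\ii\tau\,\epfone{\mu}{r}$, observes via \cref{prop:epfirrchar=theta} and \cref{rem:D=E+L} that this contains all irreducible characters, and invokes \cref{prop:epfstcharmod} and \cref{thm:atypicalmodularity} for modular invariance. Your explicit verification that the $\tau$-factors cancel under $S$ and that $T$ maps $-\ii\tau\,\epfone{\mu}{r}$ into the sum of families (ii) and (iii), together with the count $2p\cdot\tfrac{1}{2}(u-1)(v-1)=p(u-1)(v-1)$, matches the paper's (more tersely stated) argument.
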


\begin{conjecture} \label{conj}
	The \voa{} $\epfvoa{k}$ is $C_2$-cofinite.
\end{conjecture}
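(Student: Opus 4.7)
The plan is to deduce $C_2$-cofiniteness of $\epfvoa{k}$ by realising it as a finite-index orbifold of a larger vertex operator (super)algebra $\VOA{W}_k$ whose own $C_2$-cofiniteness can be established via a free-field/screening-operator construction. The essential input will be Miyamoto's orbifold theorem that $C_2$-cofiniteness descends to fixed-point subalgebras under finite group actions: if $\VOA{W}_k$ is a finite-order simple current extension of $\epfvoa{k}$ and $\VOA{W}_k$ is $C_2$-cofinite, then so is $\epfvoa{k} = \VOA{W}_k^{\widehat{G}}$, where $\widehat{G}$ is the dual of the extension group.

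First, I would construct $\VOA{W}_k$ by extending $\epfvoa{k}$ by a maximal finite subgroup $G$ of its simple current group $\set{\epfirr{\mu}{1}, \epfirr{\mu}{u-1}}$ (cf.~\cref{prop:epffusL}) for which the extension is $\ZZ$- or $\frac{1}{2}\ZZ$-graded by conformal weight. The three computed cases in \cref{epfexamples} strongly suggest that this $\VOA{W}_k$ is the natural admissible-level analogue of the triplet/supertriplet algebras: for $k=-\frac{1}{2}$ one recovers $\tripvoa{1,2}$, for $k=-\frac{4}{3}$ a double cover yields $\tripvoa{1,3}$, and for $k=-\frac{2}{3}$ one reaches $\stripvoa{1,3}$. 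The modular data from \cref{thm:atypicalmodularity}, and in particular the dimension bound of \cref{DimVVMF}, should give a sharp prediction for the number of irreducible $\VOA{W}_k$-modules, providing an important consistency check on the construction.

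The central technical step is proving $C_2$-cofiniteness of $\VOA{W}_k$ itself. Here I would realise $\VOA{W}_k$ inside a free-field algebra as the joint kernel of two commuting screening operators, one of short and one of long type, extending the argument sketched in \cref{epfexamples} which identified $\stripvoa{1,3}$ as the kernel of a single screening on $\lvoa{\alpha\ZZ} \otimes \VOA{F}$. Once such a realisation is established, the Adamovi\'c--Milas--Feigin--Tipunin method applies: one exhibits an explicit finite strong generating set for $\VOA{W}_k$ and uses the screening relations together with the Jacobi identity to reduce all higher-order normally-ordered products modulo $C_2(\VOA{W}_k)$, obtaining finite-dimensionality of $\VOA{W}_k/C_2(\VOA{W}_k)$.

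The hard part will be constructing the screenings uniformly in $k$ and proving the requisite reduction identities, since in the existing literature this is done case by case and depends delicately on singular vectors in the underlying free-field algebra. A fallback is to work directly with $\epfvoa{k}$: using the resolutions \eqref{eq:pfresL} and \eqref{eq:pfresD} to locate explicit singular vectors in a Wakimoto-type realisation of $\slvoa{k}$, and pushing them down through the Heisenberg coset, one could try to produce enough $C_2$-relations in $\epfvoa{k}$ to bypass the $\VOA{W}_k$ intermediary entirely, at the cost of losing the clean orbifold picture.
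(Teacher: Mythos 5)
The statement you are addressing is stated in the paper as a \emph{conjecture}: the authors offer no proof, only supporting evidence (finitely many irreducibles, modularity of the characters up to pseudotraces) and verification in the three special cases $k=-\frac{1}{2}$, $-\frac{4}{3}$, $-\frac{2}{3}$, where $\epfvoa{k}$ is identified with $\tripvoa{1,2}$ or with finite orbifolds of $\tripvoa{1,3}$ and $\stripvoa{1,3}$. Your proposal is therefore not being measured against an existing argument, and it must be judged on its own terms --- and on those terms it is a research programme, not a proof. Every load-bearing step is left open: (i) the existence, for general admissible $k=-2+\frac{u}{v}$, of a finite subgroup $G$ of the simple current group of \cref{prop:epffusL} whose extension $\VOA{W}_k$ is $\ZZ$- or $\frac{1}{2}\ZZ$-graded is not established --- the conformal weights $\pfirrdim{\mu}{1}$ and $\pfirrdim{\mu}{u-1}$ depend on $u$, $v$ in a way that makes integrality a genuine arithmetic condition, and in the three known cases the relevant extensions were found by inspection; (ii) the free-field realisation of $\VOA{W}_k$ as a joint kernel of two commuting screenings is asserted by analogy with the $(1,p)$ and $N=1$ cases but not constructed, and you yourself flag that the existing literature does this case by case using singular-vector data that is not available uniformly in $k$; (iii) the finite strong generation of $\VOA{W}_k$ and the reduction of normally-ordered products modulo $C_2(\VOA{W}_k)$ is precisely the content of the theorem you are trying to prove, pushed onto an object you have not yet defined.

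There is also a more specific obstruction you should be aware of: Miyamoto's orbifold theorem applies to cyclic orbifolds of $C_2$-cofinite vertex operator \emph{algebras}, and the paper itself hedges the $k=-\frac{2}{3}$ case with the caveat that one must assume the result extends to appropriate superalgebras and to the non-cyclic group $\ZZ_2\oplus\ZZ_2$. Your plan inherits both caveats in general, since the putative $\VOA{W}_k$ will typically be a superalgebra and $G$ need not be cyclic. None of this means the strategy is wrong --- it is exactly the route suggested by the paper's remark following the conjecture, and the consistency check against \cref{DimVVMF} is a good idea --- but as written the proposal establishes nothing beyond what the paper already records for the three small levels. The fallback of producing $C_2$-relations directly in $\epfvoa{k}$ from a Wakimoto realisation is likewise only named, not executed.
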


\begin{remark}
	This $C_2$-cofiniteness is known for $\epfvoa{-1/2}$ because is it isomorphic to the triplet $\tripvoa{1,2}$ \cite{CarNon06}, $\epfvoa{-4/3}$ because it is isomorphic to a $\ZZ_2$-orbifold of $\tripvoa{1,3}$ \cite{CarNon06,MiyC2C15}, and $\epfvoa{-2/3}$ because it is isomorphic to a $\ZZ_2 \oplus \ZZ_2$-orbifold of $\stripvoa{1,3}$ \cite{AdaN=109} (assuming that the main result of \cite{MiyC2C15} also holds for appropriate \svoas{}).
\end{remark}

\flushleft

\end{document}